\documentclass[11pt]{article}

\usepackage[psamsfonts]{amssymb}
\usepackage{amsfonts,amsmath}
\usepackage{amsthm}

\usepackage[backend=biber,style=numeric,autocite=inline,maxbibnames=100]{biblatex}
\addbibresource{biblio.bib}

\usepackage{listings}
\usepackage{xcolor}
\usepackage{epsfig,multicol}

\newcommand{\IH}{\mathbb{H}}

\newcommand{\IR}{{\mathbb{R}}}

\newcommand{\IS}{\mathbb{S}}

\newcommand{\IZ}{\mathbb{Z}}

\newcommand{\IC}{\mathbb{C}}
\newcommand{\ID}{\mathbb{D}}
\newcommand{\oC}{\hat{\IC}}

\newcommand{\abs}[1]{\lvert#1\rvert}

\let\Re\relax
\let\Im\relax

\DeclareMathOperator{\PSL}{PSL}
\DeclareMathOperator{\Isom}{Isom}
\DeclareMathOperator{\SL}{SL}
\DeclareMathOperator{\tr}{tr}
\DeclareMathOperator{\Re}{Re}
\DeclareMathOperator{\Im}{Im}

\newtheorem{theorem}{Theorem}
\newtheorem*{theorem*}{Theorem}
\newtheorem{lemma}{Lemma}

\newtheorem{corollary}{Corollary}
\newtheorem{conjecture}{Conjecture}

\theoremstyle{definition}

\theoremstyle{remark}
\newtheorem*{remark}{Remark}

\newtheorem*{notation}{Notation}

\usepackage{hyperref}

\title{Approximations of the Riley slice}
\author{Alex Elzenaar, Gaven Martin \& Jeroen Schillewaert \thanks{
Parts of this work appear in the MSc thesis of the first author.
\newline
Work of authors partially supported by the New Zealand Marsden Fund and by a University of Auckland FRDF grant.
 \newline
AE+JS, Department of Mathematics,  The University of Auckland,
\newline
email: \url{aelz176@aucklanduni.ac.nz},\url{j.schillewaert@auckland.ac.nz}
 \newline
GM, Institute for Advanced Study,
Massey University,  Auckland,
New Zealand.
\newline
email: \url{g.j.martin@massey.ac.nz}
\newline
{\bf Keywords.} Kleinian groups, Teichm\"uller space, hyperbolic geometry.
\newline
{\bf MSC Subject.} 32G15, 30F40 57N05
}}
\date{}
\begin{document}
\maketitle
\begin{abstract}
Adapting the ideas of L. Keen and C. Series used in their study of the Riley slice of Schottky groups generated by two parabolics,  we explicitly identify `half-space' neighbourhoods of pleating rays which lie completely in the Riley slice.  This gives a provable method to determine if a point is in the Riley slice or not.  We also discuss the family of Farey polynomials which determine the rational pleating rays and their root set which determines the Riley slice; this leads to a dynamical systems interpretation of the slice.

Adapting these methods to the case of Schottky groups generated by two elliptic elements in subsequent work facilitates the programme to identify all the finitely many arithmetic generalised triangle groups and their kin.
\end{abstract}


\section{Introduction}
The usual route to the Riley slice is through the theory of Kleinian groups,  or more precisely Schottky groups,  generated by two parabolic elements in $\PSL(2,\IC)$.  This problem has a long
history begining with Sanov \cite{Sanov} in 1947, Brenner \cite{Brenner} and Chang, Jennings, and Ree \cite{CJR} and Lyndon and Ullman \cite{LU}.  These papers all relied on various versions of
what are now known as combination theorems and these are explained in some detail in Maskit's book \cite{Mas}; this book also includes the basic theory of Kleinian groups, which we briefly discuss
now in order to fix notation.

Recall that a Kleinian group may be equivalently defined as (a) a discrete subgroup of $ \PSL(2,\mathbb{C}) $, or (b) a discrete subgroup of $ \Isom^+(\mathbb{H}^3) $. The relationship between these
two definitions comes from the fact that isometries of hyperbolic 3-space are uniquely characterised by their actions on the sphere at infinity: namely, there is a natural bijection between $ \Isom^+(\mathbb{H}^3) $
and the group of conformal automorphisms of $ S^2 $. After identifying $ S^2 $ with the Riemann sphere $ \hat\IC $, we may characterise the conformal automorphisms as none other than the M\"obius
transformations, the maps of the form $ z \mapsto \frac{az + b}{cz + d} $ ($a,b,c,d \in \mathbb{C} $ with $ ad-bc \neq 0 $). Performing one final identification, of $ \hat\IC $ with $ \mathbb{P}\mathbb{C}^1 $,
we see that the M\"obius transformations are in natural correspondence with $ \SL(2,\mathbb{C}) $ via the identification
\begin{displaymath}
  \left( z \mapsto \frac{az + b}{cz + d} \right) \leftrightarrow \begin{pmatrix} a&b\\c&d \end{pmatrix}.
\end{displaymath}
Observe finally that $ \SL(2,\mathbb{C}) = \{\pm 1\} \times \PSL(2,\mathbb{C}) $, but in the world of M\"obius transformations we can always multiply both the numerator and denominator through by $ \sqrt{-1} $
if necessary to normalise the denominator of any matrix representative to 1 without changing the geometry of the map --- in other words, we can always lift from $ \PSL(2,\mathbb{C}) $ to $ \SL(2,\mathbb{C}) $
without issue as long as we are careful to always pick representatives of determinant 1.

The Riley slice $ \mathcal{R} $ is the moduli space parameterising the complex structures on the four-times punctured sphere $ S^2_4 $. More precisely, define a
family $ (\Gamma_\mu)_{\mu \in \mathbb{C} \setminus \{0\}} $ of subgroups of $ \PSL(2,\IC) $ by
\begin{displaymath}
  \Gamma_\mu := \left\langle X = \begin{pmatrix} 1 & 1 \\ 0 & 1 \end{pmatrix}, Y_\mu = \begin{pmatrix} 1 & 0 \\ \mu & 1 \end{pmatrix} \right\rangle;
\end{displaymath}
The assumption $\mu\neq 0$ implies that $\Gamma_\mu$ is not abelian and has free subgroups of all ranks. The group $\Gamma_\mu$ acts on the Riemann sphere $\oC$ and there is a largest open (possibly empty)
set $ \Omega(\Gamma_\mu) \subset \IC$ on which this group acts discontinuously (the \textit{ordinary set}); the complement of this set in $ \hat\IC $ is the \textit{limit set} $ \Lambda(\Gamma_\mu) $
and is the closure of the set of fixed points of elements of $ \Gamma_\mu $ (so, since $ \infty $ is fixed by $ f $, $ \infty \in \Lambda(\Gamma_\mu) $).\footnote{One may also define the ordinary set in the following
way, if $ \Gamma_\mu $ is discrete and non-elementary (true for every group in $ \mathcal{R} $): it is the largest domain in $ \mathcal{C} $ on which the transformations of $ \Gamma_\mu $ are equicontinuous. In this
way the ordinary set is analogous to the \textit{Fatou set} of a dynamical system, as in Section~\ref{sec:dynamics} below.}
The quotient $\Omega(\Gamma_\mu)/\Gamma_\mu$ is a Riemann surface. When $ \Gamma_\mu $ is free and discrete, the Riemann surfaces so obtained are supported on one of three homeomorphism
classes of topological space: the empty set; a disjoint union of two three-times punctured spheres; and a four-times punctured sphere \cite{MS}. It happens that the first two types of
space may be viewed as geometric deformations of four-times punctured spheres, and so it is natural to consider the set of all $ \mu $ such that $ \Gamma_\mu $ is free and discrete and
such that this quotient is a four-times punctured sphere; the other two kinds of space then form the boundary of this set.

Thus, the Riley slice is defined by
\begin{displaymath}
  \mathcal{R} = \{\mu\in \IC : \Omega(\Gamma_\mu)/\Gamma_\mu \text{ is topologically a four-times punctured sphere} \}.
\end{displaymath}

Denote the M\"obius transformations of $\oC$ representing $X$ and $Y$ by $f$ and $g$ respectively, so
\begin{displaymath}
  f(z) = z + 1 \text{ and } g(z) = \frac{z}{\mu z + 1}
\end{displaymath}
for $ z \in \IC $. We will abuse notation and write $ \Gamma_\mu $ for $ \langle f,g \rangle $ as well as for the matrix group.

\begin{notation}
  Often we will be working with a fixed $ \Gamma_\mu $; in this situation we will often write $ \Gamma $ and  $ Y $ for $ \Gamma_\mu $ and $ Y_\mu $ without comment.
\end{notation}

One can also view the Riley slice as the quotient of the Teichm\"uller space ${\cal T}_{0,4}$ (genus $0$ surface with $4$ punctures) by a Dehn twist about a simple
closed curve which separates one pair of punctures from another in $S^2_4$.  Since ${\cal T}_{0,4}$ is simply connected,  and the Dehn twist generates an infinite
cyclic group acting effectively and discretely on ${\cal T}_{0,4}$, it is the mapping class group,  see \cite{Lyb}. Thus the Riley slice is topologically an annulus
and admits an intrinsic hyperbolic metric which agrees with the Teichm\"uller metric.

The theory of Keen and Series \cite{KS} endows the Riley slice with a foliation structure. This structure consists of a set of curves parameterised by $ \mathbb{Q} $ which radiate out from the boundary of the slice
and which are dense in the slice (the so-called \textit{rational pleating rays}) together with a natural completion (in the sense that we may add curves parameterised by $ \mathbb{R}\setminus\mathbb{Q} $ in order to fill out the entire slice). In Figure~\ref{fig:pleating_rays} we illustrate the Riley slice together with a selection of rational pleating rays.

The exterior of the Riley slice (the bounded region of Figure~\ref{fig:pleating_rays}) is also of interest: it includes all the groups $\Gamma_\mu$ which are discrete but not free,  among them are, for instance, all hyperbolic two bridge knot complements. These lie along or at the endpoint of a rational pleating ray.   Recently \cite{A1,A2} gave a complete description of all these discrete groups outside the Riley slice as Heckoid groups and their near relatives.  For each such group there are at most two Nielsen classes of parabolic generating pairs.  The boundary of the Riley slice is a Jordan curve with outward directed cusps \cite{A3}.  The non-discrete groups are generically free,  but every neighbourhood of a non-discrete group contains a supergroup containing any given two parabolic groups --- discrete or otherwise ---  and a group with any prescribed number of distinct Nielsen classes, \cite{M1}.

\begin{figure}
  \centering
  \includegraphics[width=\textwidth]{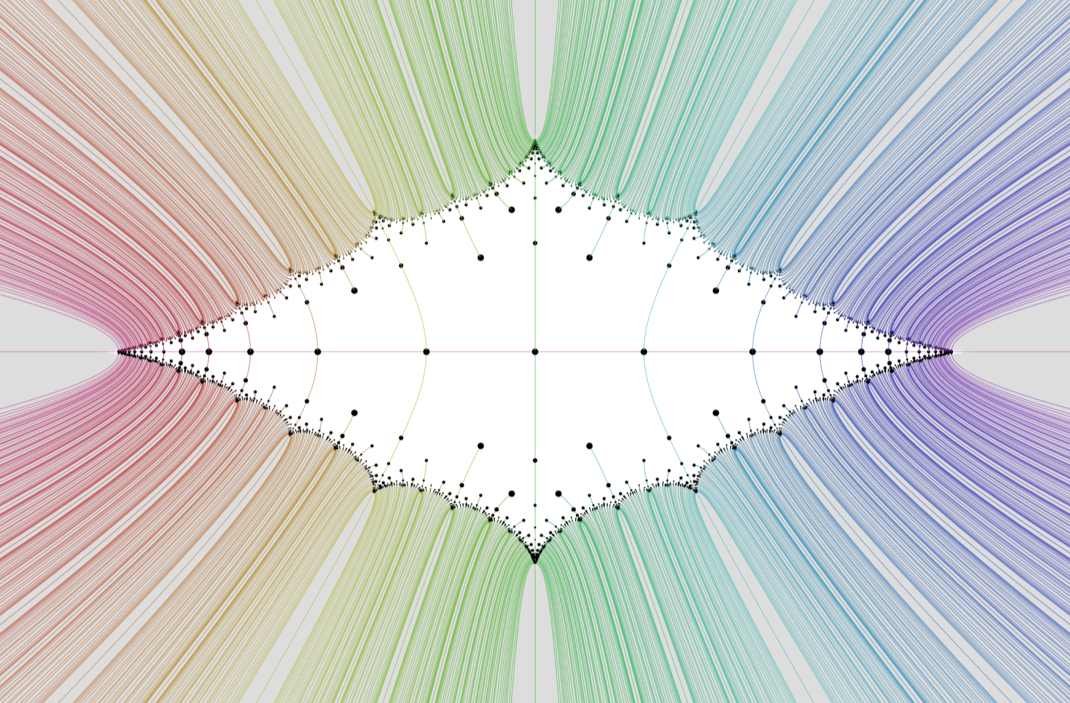}
  \caption{The Riley slice is the unbounded region, foliated by  `rational pleating rays' (the coloured curves).  The symmetries of this space include complex conjugation and $\mu \leftrightarrow -\mu$ interchanging $g$ with $g^{-1}$. Picture courtesy of Y. Yamashita.\label{fig:pleating_rays}}
\end{figure}

Our motivation for the study of the Riley slice here is the continuation of a longstanding programme to identify all the finitely many generalised arithmetic triangle groups in $\PSL(2,\IC)$ \cite{GM,GMM,CMM,MM,MSY}.  For this programme, one needs quite refined computational descriptions of other one complex dimensional moduli spaces such as the moduli space of $S^2_{p,q}$,  the 2-sphere with four cone points (two of order $p$ and two of order $q$).  Arithmetic criteria developed and described in \cite{GMMR} identify those algebraic integers in $\IC$ which give rise to discrete subgroups of arithmetic lattices.  Obtaining degree bounds, and numerically identifying these points, is a challenging task,  see \cite{FR,MM,MSY}. Once an algebraic integer is identified there are further problems.  \textit{A priori}, the relevant group is discrete,  but we need to know if it is in fact free on the generators (analogue of Riley slice) and if it is not,  identify the abstract group and hyperbolic 3-manifold quotient.  We illustrate some relevant data in Figure~\ref{fig:yamashita}.

\begin{figure}
  \centering
  \includegraphics[width=\textwidth]{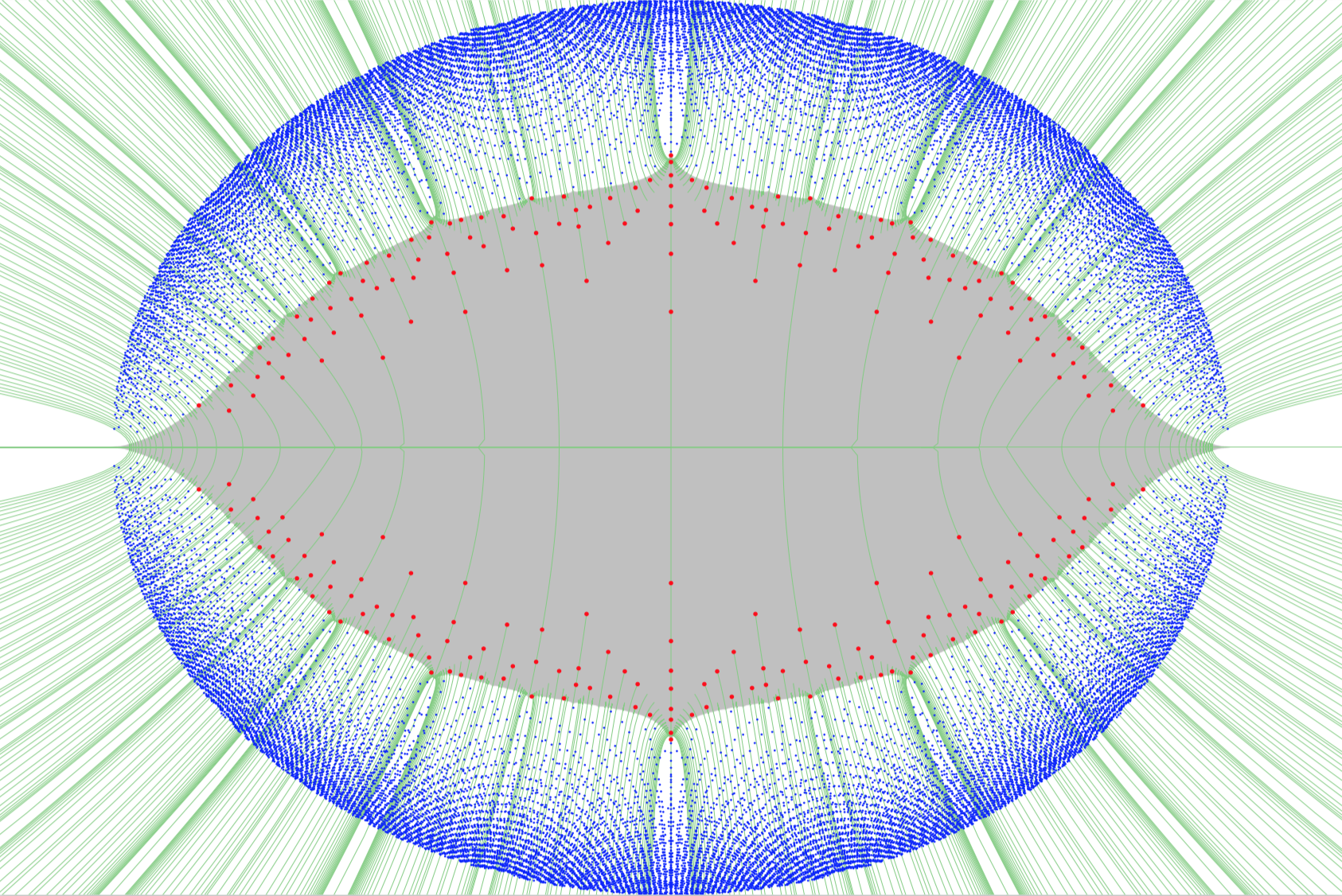}
  \caption{$\mathcal{R}_{3,3}$. The analogue of the Riley slice is the exterior of the grey region and foliated by rational pleating
           rays.  The $15,909$ algebraic integers satisfying the arithmetic criteria described found by Flammang and Rhin \cite{FR}
           above are blue (if, by visual inspection, are outside the grey region) and red (if, by visual inspection, are inside the
           grey region).  These red points yield either lattices or rigid groups with circle packing limit sets in $\PSL(2,\IC)$ that
           are generated by elliptic elements of order two and three. Picture in collaboration with Y. Yamashita.\label{fig:yamashita}}
\end{figure}

In order to be able to resolve these issues we need to be able to \emph{provably} decide if a point in $\IC$ actually lies in the Riley slice or its analogue.
Solving this problem also has computational applications,  including identifying all the finitely many generalised arithmetic triangle groups in $\PSL(2,\IC)$ \cite{GM,GMM,CMM,MM}.

\paragraph{Main results.}
Our first main result, Theorem~\ref{thm:rec} (p.\pageref{thm:rec}), sets up a dynamical system whose stable region contains $ \mathcal{R} $; this gives a system of polynomials which we
call $ Q_{p/q} $ whose filled Julia sets lie in the exterior of the Riley slice. As a incidental consequence of the theory used to derive this result, we characterise the Farey word traces
of the discrete groups which lie on pleating ray extensions (Theorem~\ref{thm:discrete_gps_on_pleating_rays}).

With the technology of Keen and Series, we may identify whether a point lies on a rational pleating ray, but the union of these rays has measure $0$ so it is not so useful to check whether a point
lies in the Riley slice. In this paper, we show that a well defined open neighbourhood of each rational pleating ray lies in the Riley slice (or its torsion generated analogue) so that we can `capture' points.
This is the content of our second main theorem, Theorem~\ref{main} (p.\pageref{main}), which extends the theory of Keen and Series to give such neighbourhoods.

\paragraph{Structure of the paper.}
In section~\ref{sec:FWP} we introduce the Farey words, which represent simple closed curves on the four-times punctured sphere which are not boundary-parallel; the basis of the theory is the
relationship between the combinatorics
and algebra of these words and the deformations of the curves that they represent. In Section~\ref{sec:FP} we define the Farey polynomials $P_{p/q}(\mu)$ and give our first main result, Theorem~\ref{thm:rec},
together with some conjectures on the structure of the Farey polynomials and the dynamical system that they generate.
In Section~\ref{sec:NRPR} we motivate our second main result, Theorem~\ref{main}, and place it in context with prior work by Lyndon and Ullman. We conclude the paper in Section~\ref{sec:PF} with
the proof of Theorem~\ref{main} together with some related estimates.

\paragraph{Future work.}
In an upcoming paper \autocite{EMSElliptic} we extend the theory of neighbourhoods of cusp points to the elliptic setting (an example of an elliptic Riley slice is given in Figure~\ref{fig:ellipticslice}); this
is more than just a slight modification of the argument for the parabolic case and it relies on more accurate estimates like those hinted at in Figure~\ref{fig:LURays} on page~\ref{fig:LURays} below.

\begin{figure}
  \centering
  \includegraphics[width=0.6\textwidth]{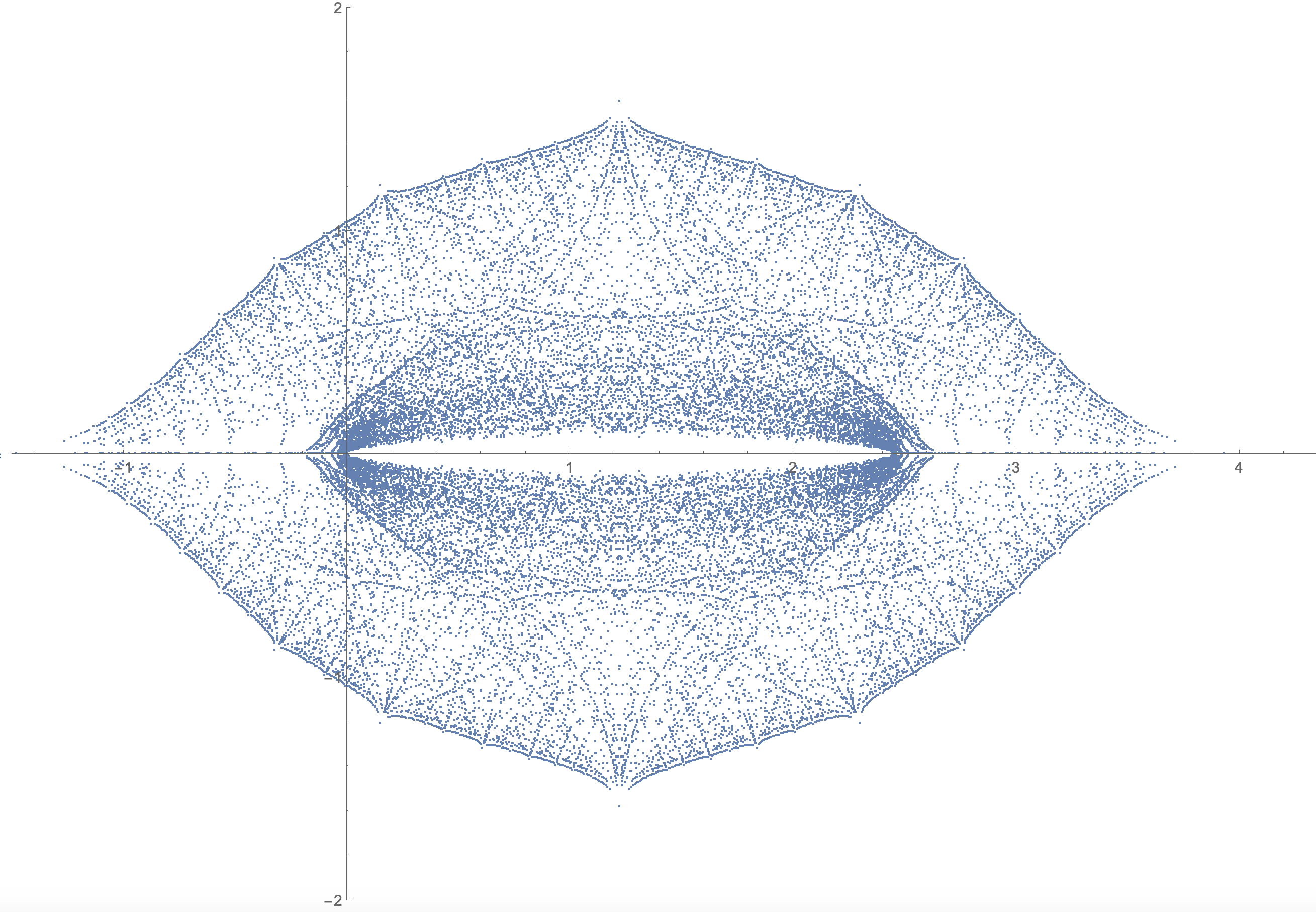}
  \caption{Points approximating the slice of Schottky space corresponding to a sphere with paired cone points, of respective cone angles $ 2\pi/6 $ and $ 2\pi/8 $.\label{fig:ellipticslice}}
\end{figure}

In preparation we also have a paper \autocite{EMSCombinatorics} which gives various combinatorial identities involving the Farey words and their trace polynomials, including a much more efficient
method for determining the Farey polynomials computationally via a recurrence formula, along with some applications to the geometry of the Riley slice boundary. Closed forms for certain sequences
of Farey polynomials can also be computed, allowing the approximation of the Riley slice near the cusp point at $ +4 $ by a sequence of well-behaved neighbourhoods of the form described in
Section~\ref{sec:NRPR} of the current paper. In Figure~\ref{fig:cuspapprox} we include a picture produced from the first 100 polynomials of this approximating sequence; it took just under two minutes to
compute the roots symbolically in \texttt{Mathematica} from scratch.

\begin{figure}
  \centering
  \includegraphics[width=0.6\textwidth]{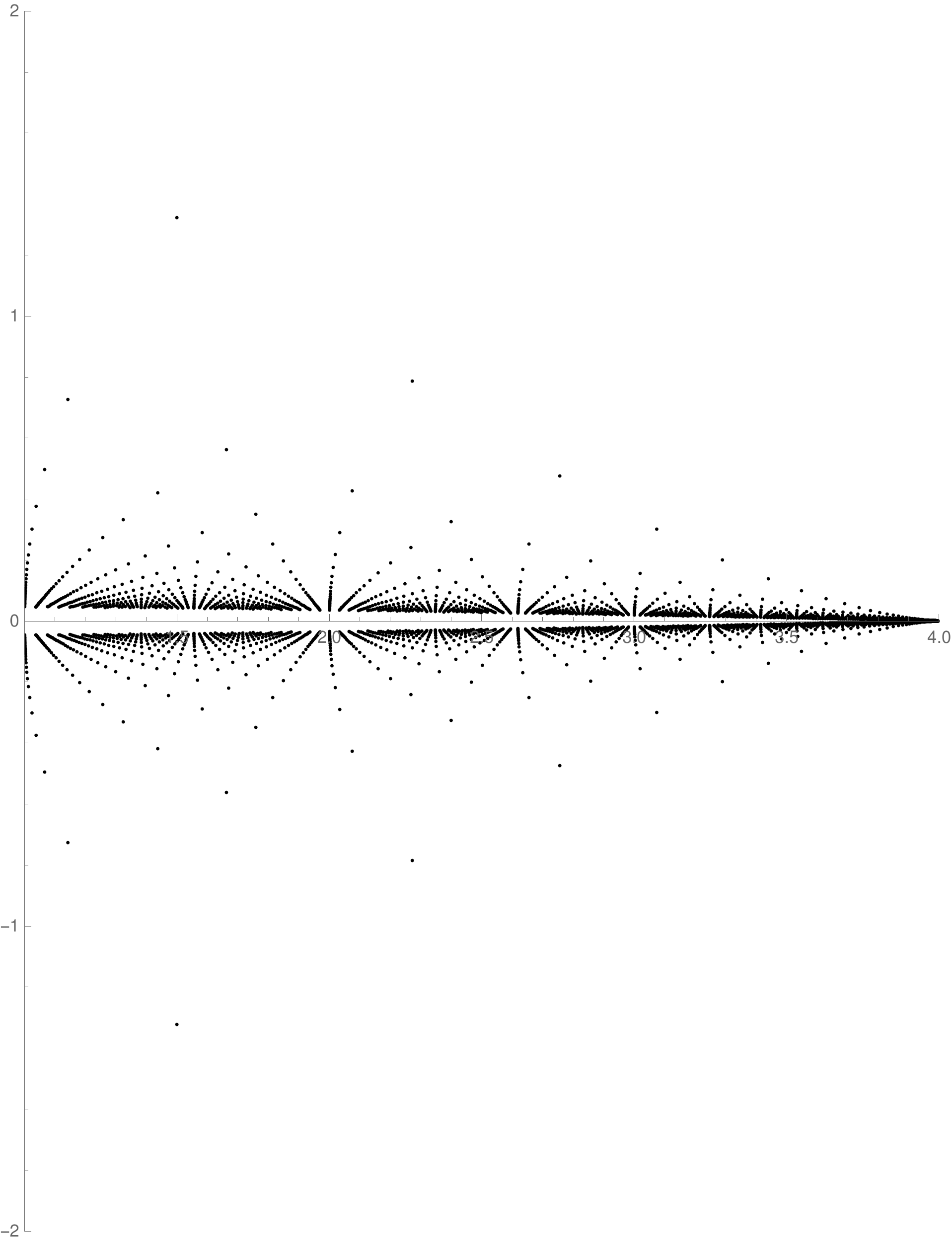}
  \caption{Inverse images of $ -2 $ under a sequence of Farey polynomials approximating the $ +4$-cusp.\label{fig:cuspapprox}}
\end{figure}

\section{Farey words and polynomials}\label{sec:FWP}

\begin{figure}
  \centering
  \includegraphics[width=0.8\textwidth]{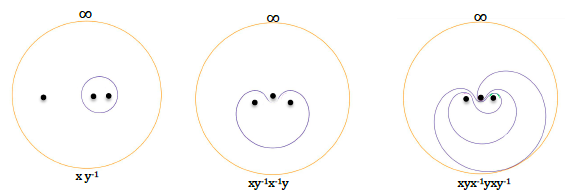}
  \caption{Simple closed curves on a 4-times punctured sphere,  from left: $\frac{1}{1}$, $\frac{1}{2}$, $\frac{2}{3}$.  One puncture is at $\infty$,  disks bound neighbourhoods of the other punctures.\label{fig:SCC}}
\end{figure}

A \textit{Farey word} is a word in $x$ and $y$ representing a simple closed curve on the four-times punctured sphere which is not homotopic to a cusp (Figure~\ref{fig:SCC}).
The definition of these words in terms of rational slopes $p/q$ is explained in \cite[\S 2.3]{KS} with some corrections in \cite{SM}.
The exact details are not useful to us here; however, it will be useful to know the broad structure of the Farey words. The main result is the following, which is
essentially immediate from the combinatorial definition given in \cite{KS}.

\begin{lemma}\label{lemma1}
  Let $p/q$ be a rational slope and $W_{p/q}$ a Farey word. Then $ W_{p/q} $ has word length $ 2q $. Further:
  \begin{enumerate}
    \item If $q$ is even, then there are $u,v\in \langle x,y\rangle$ such that
          \begin{displaymath}
            W_{p/q} = x u x^{-1} u^{-1} = v y v^{-1} y^{-1}.
          \end{displaymath}
    \item If $q$ is odd, then there are $u,v\in \langle x,y\rangle$ such that
          \begin{displaymath}
            W_{p/q} = x u y u^{-1} = v x^{-1} v^{-1} y^{-1}
          \end{displaymath}
  \end{enumerate}
  In particular if $q$ is even,  then $W_{p/q}$ is a commutator in two different ways. The word length of $W_{p/q}$ is $2q$. \qed
\end{lemma}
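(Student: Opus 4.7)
The plan is to work directly from the Keen--Series combinatorial definition of $W_{p/q}$, which constructs the Farey words inductively along the Stern--Brocot (Farey) tree: starting from base cases like $W_{0/1} = y$ and $W_{1/0} = x$, whenever $p_1/q_1$ and $p_2/q_2$ are Farey neighbours (so $\abs{p_1 q_2 - p_2 q_1} = 1$) with mediant $p/q = (p_1 + p_2)/(q_1 + q_2)$, the word $W_{p/q}$ arises (up to cyclic conventions) as an appropriate concatenation of $W_{p_1/q_1}$ and $W_{p_2/q_2}$. Both conclusions of the lemma are then structural properties that propagate along this tree.

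For the word-length claim I would proceed by an immediate induction on the tree: the base words have length $2 = 2 \cdot 1$, and if $W_{p_i/q_i}$ has length $2q_i$ for $i=1,2$, then the concatenation has length $2q_1 + 2q_2 = 2(q_1+q_2) = 2q$. Equivalently, counting the crossings of the line of slope $p/q$ in $\mathbb{R}^2$ with the integer grid, projected to the torus, immediately yields $2q$ letters. For the commutator-type decompositions, I would exploit the well-known symmetry of the cutting sequence under the involution $(x,y)\mapsto(-x,-y)$ of $\mathbb{R}^2$: this forces each $W_{p/q}$ to be invariant, up to cyclic conjugation, under the map $W \mapsto W^{-1}$ that reverses the letter sequence and inverts each letter. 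Writing $W_{p/q} = a_1 \cdots a_{2q}$, the second half is thus the inverse of a cyclic shift of the first half, and the parity of $q$ determines how the halves pair up: when $q$ is even, both the midpoint and endpoint land between like letters, producing the two commutator factorisations $xux^{-1}u^{-1}$ and $vyv^{-1}y^{-1}$ by cyclically starting at the appropriate $x$ or $y$; when $q$ is odd, the midpoint falls opposite a letter of the other type, giving the twisted forms $xuyu^{-1}$ and $vx^{-1}v^{-1}y^{-1}$.

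The main obstacle is not any single algebraic identity but the bookkeeping: one must track the first, middle, and last letters of $W_{p/q}$ through the Stern--Brocot recursion and verify that each of the four types of mediant step (according to the parities of $q_1$ and $q_2$, bearing in mind $q_1 + q_2 = q$) carries the stated normal form into the stated normal form at the next level. Once the base cases are checked and a single inductive step is verified for each parity case, both claimed decompositions for each parity of $q$ follow simultaneously, and the word-length claim comes along for free; hence the author's remark that the result is essentially immediate from the definition.
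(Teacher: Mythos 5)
The paper does not actually supply an argument here: the lemma carries a \verb|\qed| on its statement and is asserted to be ``essentially immediate from the combinatorial definition given in \cite{KS}'', so the only fair comparison is against what that definition directly yields. Measured that way, your proposal has a genuine gap at its foundation. Your induction rests on the premise that at a Farey mediant $p/q=(p_1+p_2)/(q_1+q_2)$ the word $W_{p/q}$ is, up to cyclic convention, a concatenation of $W_{p_1/q_1}$ and $W_{p_2/q_2}$. That is a theorem about Christoffel/cutting-sequence words over a positive alphabet, where the $i$-th letter records the \emph{type} of the $i$-th grid crossing. The Keen--Series Farey words are a different construction: the word always has exactly $2q$ letters alternating $x^{\pm1},y^{\pm1},x^{\pm1},\dots$, and the exponent of the $i$-th letter is $\epsilon_i=(-1)^{\lfloor ip/q\rfloor}$ --- the parity of the height of the line, not the crossing type. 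For these words the concatenation identity fails. Using the paper's own Table~1: $W_{1/1}=xy^{-1}$, $W_{1/2}=xyx^{-1}y^{-1}$, but $W_{2/3}=xyx^{-1}yxy^{-1}$, whereas $W_{1/2}W_{1/1}=xyx^{-1}y^{-1}xy^{-1}$ and $W_{1/1}W_{1/2}=xy^{-1}xyx^{-1}y^{-1}$; neither product, nor any cyclic permutation or inverse of either, equals $W_{2/3}$. (Your length heuristic also betrays the conflation: the segment from $(0,0)$ to $(q,p)$ meets the grid about $p+q$ times, not $2q$ times; the length $2q$ is simply built into the definition.) So the inductive step you defer to ``bookkeeping'' is not bookkeeping at all --- the recursion that genuinely holds in this setting lives at the level of traces (the Farey polynomial recurrence in \cite{EMSCombinatorics}) or requires a sign-twisted product, and establishing it is comparable in difficulty to the lemma itself.

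Your second instinct --- a symmetry of the letter sequence under a point reflection --- is the right one, and it gives the lemma directly with no induction. Since $\gcd(p,q)=1$, for $0<i<2q$ with $i\neq q$ one has $\lfloor(2q-i)p/q\rfloor=2p-1-\lfloor ip/q\rfloor$, hence $\epsilon_{2q-i}=-\epsilon_i$; moreover the reflection $i\mapsto 2q-i$ preserves the parity of the index and therefore the letter type ($x$ versus $y$). Writing $W_{p/q}=a_1a_2\cdots a_{2q}$ this says $a_{q+j}=a_{q-j}^{-1}$ for $1\leq j\leq q-1$, so $W_{p/q}=u\,a_q\,u^{-1}a_{2q}$ with $u=a_1\cdots a_{q-1}$, where $a_{2q}=y^{(-1)^{2p}}=y$ and $a_q$ is $y^{(-1)^p}=y^{-1}$ when $q$ is even (forcing $p$ odd) and $x^{\pm1}$ when $q$ is odd. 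Up to the cyclic normalisation implicit in the statement this is exactly the claimed $vyv^{-1}y^{-1}$, respectively $vx^{-1}v^{-1}y^{-1}$, form; the companion factorisations $xux^{-1}u^{-1}$ and $xuyu^{-1}$ come from running the same computation on the reflection about the other distinguished index (equivalently, on the cyclic representative beginning at the first letter). I would recommend you discard the Stern--Brocot induction entirely and present only this symmetry argument.
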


We can view $ W_{p/q} $ as a word $ W_{p/q}(\mu) $ in $ \Gamma_\mu $ by performing the substitution $ x \mapsto X $, $ y \mapsto Y_\mu $:
\begin{equation}\label{eqn:word_coeffs}
  W_{p/q}(\mu) = \begin{pmatrix} a_{p/q}(\mu) & b_{p/q}(\mu) \\ c_{p/q}(\mu) & d_{p/q}(\mu) \end{pmatrix} \qquad  a_{p/q}d_{p/q}-b_{p/q}c_{p/q}=1.
\end{equation}
The entries of $W_{p/q}(\mu)$ are polynomials of degree $q$ in the symbol $ \mu $. In particular, the trace $ \tr W_{p/q}(\mu) $ is a polynomial
of degree $q$ in $ \mu $; we call this polynomial the \textit{Farey polynomial} of slope $ p/q $ and denote it by $ P_{p/q}(\mu) $. The polynomial
$ Q_{p/q} := P_{p/q} - 2 $ also turns out to be very useful in the sequel. In Table~\ref{tab:farey_words}, we list examples of Farey words with
small-denominator slopes, together with their corresponding polynomials.

\begin{notation}
  Just as we write $ f $ and $ g$ for the M\"obius transformations associated to $ X $ and $ Y $, we write $ h_{p/q}(\mu) $ for the M\"obius
  transformation associated to $ W_{p/q}(\mu) $.
\end{notation}

\begin{table}
  \centering
  \caption{Farey words and their corresponding cusp points.\label{tab:farey_words}}
  \resizebox{\textwidth}{!}{\begin{tabular}{ |c|c|c|c|c|}
    \hline
    $p$ & $q$ & Farey word $W_{p/q}$ & $Q_{p/q}(\mu)  = P_{p/q}(\mu)-2$ & Approx. cusp point \\
    \hline
    $1$ & $2$ & $x  y  x^{-1}  y^{-1}$ & $\mu ^2$ & $2i$\\
    $4$ & $7$ & $x  y  x^{-1}  y^{-1}  x  y  x^{-1}  y  x  y^{-1}  x^{-1}  y  x  y^{-1}$ & $\mu \left(-1+2\mu - \mu^2+\mu^3\right)^2$ & $0.427505\, +1.57557 i$ \\
    $3$ & $5$ & $x  y  x^{-1}  y^{-1}  x  y^{-1}  x^{-1}  y  x  y^{-1}$ & $-\mu(1 -\mu + \mu^2)^2$ & $0.773301\, +1.46771 i$ \\
    $5$ & $8$ & $x  y  x^{-1}  y^{-1}  x  y^{-1}  x^{-1}  y  x^{-1}  y^{-1}  x  y  x^{-1}  y  x  y^{-1}$ & $\mu^4 (2 - 2 \mu+ \mu^2)^2$ & $1.05642\, +1.30324 i$ \\
    $2$ & $3$ & $x  y  x^{-1}  y  x  y^{-1}$ & $z(z-1 )^2$ &$1.5\, +(\sqrt{7}/2) i $\\
    $5$ & $7$ & $x  y  x^{-1}  y  x  y^{-1}  x  y^{-1}  x^{-1}  y  x^{-1}  y^{-1}  x  y^{-1}$ & $-\mu(1 + 2\mu - 3\mu^2 + \mu^3)^2$ & $1.85181\, +0.911292 i$ \\
    $3$ & $4$ & $x  y  x^{-1}  y  x^{-1}  y^{-1}  x  y^{-1}$ & $\mu^2(\mu-2)^2$ & $2.27202\, +0.786151 i$\\
    $4$ & $5$ & $x  y  x^{-1}  y  x^{-1}  y  x  y^{-1}  x  y^{-1}$ & $\mu(1 - 3\mu + \mu^2)^2$ & $2.75577\, +0.474477 i$ \\
    $1$ & $1$ & $x  y^{-1}$ & $-\mu$ &$4$\\
    \hline
  \end{tabular}}
\end{table}

Our computational exploration of the matrices suggested the following result.

\begin{theorem}\label{thmiso}
  With the notation of Equation~(\ref{eqn:word_coeffs}),
  \begin{equation}\label{iso}
    Q_{p/q}(\mu) = a(\mu)+d(\mu)-2=c(\mu).
  \end{equation}
\end{theorem}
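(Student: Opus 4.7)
The first equality $Q_{p/q}(\mu)=a+d-2$ is just the definition of $P_{p/q}=\tr W_{p/q}$ followed by $Q_{p/q}=P_{p/q}-2$; the content of the theorem lies in the second equality, $c=a+d-2$. The plan is to reduce this to a direct matrix computation using the factorisations from Lemma~\ref{lemma1}. In each parity case I use the factorisation of $W_{p/q}$ that \emph{begins} with $x$, since it is this one that puts control of the lower-left entry $c$ into the entries of the inner subword. Let $U=\begin{pmatrix}\alpha & \beta \\ \gamma & \delta\end{pmatrix}\in\SL(2,\IC)$ denote the matrix obtained by substituting $X$, $Y_\mu$ for $x$, $y$ in this inner subword. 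Since $c=a+d-2$ is a polynomial identity in the entries of $U$, the word structure of $U$ is irrelevant and I may treat $U$ as a generic $\SL(2,\IC)$ matrix.

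For $q$ even I expand $W_{p/q}=XUX^{-1}U^{-1}$. Using $\alpha\delta-\beta\gamma=1$, the middle cross-terms cancel and the product collapses to
\begin{displaymath}
W_{p/q}=\begin{pmatrix} 1+\gamma(\alpha+\gamma) & \ast \\ \gamma^2 & 1-\alpha\gamma \end{pmatrix},
\end{displaymath}
giving $c=\gamma^2$ and $a+d-2=\gamma(\alpha+\gamma)-\alpha\gamma=\gamma^2$. For $q$ odd I first compute the conjugate
\begin{displaymath}
UY_\mu U^{-1}=\begin{pmatrix} 1+\mu\beta\delta & -\mu\beta^2 \\ \mu\delta^2 & 1-\mu\beta\delta \end{pmatrix}
\end{displaymath}
and then left-multiply by $X$ to read off $a=1+\mu\delta(\beta+\delta)$, $d=1-\mu\beta\delta$, and $c=\mu\delta^2$; once again $a+d-2=\mu\delta^2=c$.

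The work is entirely mechanical: the only real obstacle is choosing the correct factorisation from Lemma~\ref{lemma1}, since the alternate factorisations (those ending with $y^{-1}$) would instead produce, by an analogous calculation, a parallel identity involving the upper-right entry $b$ rather than $c$, which is not what is asserted here.
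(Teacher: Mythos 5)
Your computation is correct, and it takes a genuinely different route from the paper's. The paper applies the Fricke identity $\tr[a,b]=\tr^2(a)+\tr^2(b)+\tr^2(ab)-\tr(a)\tr(b)\tr(ab)-2$ with $a=X^{-1}$, $b=W_{p/q}$, combines it with the standard fact $\tr[X,W_{p/q}]-2=c^2$, and arrives at $c^2=(a+d-2)^2$; it must then resolve the sign ambiguity $c=\pm(a+d-2)$ by evaluating at $\mu=1$ and a continuity (really, polynomial factorisation) argument. Your direct expansion of $XUX^{-1}U^{-1}$ and $X\cdot UY_\mu U^{-1}$ avoids the square root entirely, so the sign comes out for free; it also yields strictly more information, namely that $Q_{p/q}=c$ equals $\gamma^2$ for $q$ even and $\mu\delta^2$ for $q$ odd (with $\gamma,\delta$ entries of the inner subword $U$), which is essentially the perfect-square structure the paper only records as Conjecture~\ref{conj}. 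One caveat you should make explicit: Lemma~\ref{lemma1} as stated puts a literal $y$ in the middle of the odd-$q$ factorisation, but the table entries (e.g.\ $Q_{1/1}=-\mu$ and $Q_{3/5}=-\mu(1-\mu+\mu^2)^2$, which are $-\mu$ times a square of a real polynomial, not $+\mu$ times one) show that for some slopes the middle letter is actually $y^{-1}$. Since $Y_\mu^{-1}=Y_{-\mu}$, this merely flips the sign of $\mu$ inside the conjugated block and the identity $c=a+d-2$ survives verbatim, but the proof should note this rather than rely on the letter of the lemma. Both your argument and the paper's ultimately rest on the unproved combinatorial Lemma~\ref{lemma1}, so neither is more self-contained than the other on that score.
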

\begin{proof}
  Using Lemma~\ref{lemma1} we will show this reduces to the well known Fricke identity in $\PSL(2,\IC)$,
  \begin{displaymath}
  \tr[a,b] = \tr^2 (a) + \tr^2 (b) + \tr^2 (ab) - \tr (a) \tr (b) \tr (ab) -2.
  \end{displaymath}

  We put $a=X^{-1}$ and $b=W_{p/q}$.  Note that, by Lemma~\ref{lemma1} and the conjugacy invariance of trace, $\tr X^{-1}W_{p/q} = \tr(X)$ or $\tr(Y)$ depending on whether $q$ is even or odd.
  In our situation both of these numbers are $2$. Thus, supposing $q$ is odd,
  \begin{align*}
    c_{p/q}^2 & = \tr[X,W_{p/q}]-2   =  \tr[X^{-1},W_{p/q}] -2 \\
              & =   \tr^2 (X^{-1}) + \tr^2 (W_{p/q}) + \tr^2 (Y) - \tr (X^{-1}) \tr (W_{p/q}) \tr (Y) -4\\
              & = 4 + (a_{p/q}+d_{p/q})^2   - 4 \tr (a_{p/q}+d_{p/q}) \\
              & = (a_{p/q}+d_{p/q}    - 2)^2
  \end{align*}
  Thus $c_{p/q}=\pm(a_{p/q}+d_{p/q} - 2)$. When $\mu = 1$, the positive square root occurs. Since the identity is continuous in $\mu$, it follows that the positive square root is the correct choice for all $\mu$.
  The result follows with a similar calculation if $q$ is even.
\end{proof}

\begin{remark}
  In fact as an identity among polynomials  in $\mu$ we only need to check Equation~(\ref{iso}) on the integers.  If $\mu\in \IZ$,  then the group $\langle X,W_{p/q} \rangle$ is a
  subgroup of $\PSL(2,\IZ)$ (or in fact of the modular group) and the statement of the theorem is simply that the isometric circles of $W_{p/q}$ are tangent to those of $X$ (the
  latter being the vertical lines $\Re(z)=-\frac{1}{2}$, and $\Re(z)=+\frac{1}{2}$.  This can also be seen from the Farey tessellation of the interval $[0,1]$ illustrated in
  Figure~\ref{fig:farey_tess},  see \cite{Farey}.
\end{remark}

\begin{figure}
  \centering
  \includegraphics[width=0.5\textwidth]{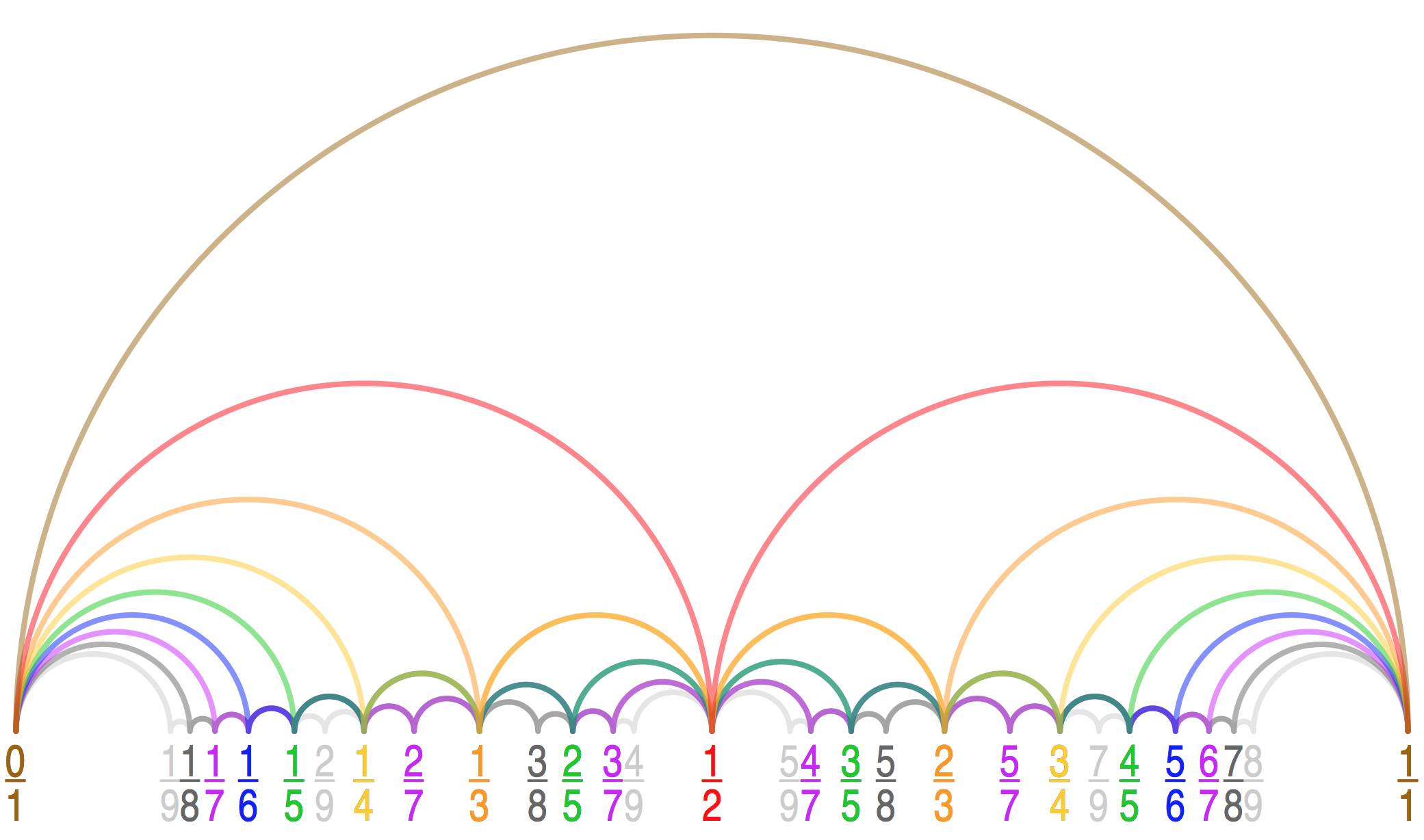}
  \caption{The Farey tessellation of $[0,1]$.\label{fig:farey_tess}}
\end{figure}

The importance of Farey words in this setting is that in order for an isomorphic family of discrete groups to approach the boundary of a moduli space, a simple closed curve has to shrink to a cusp.
That is, a word in the reference group has to become parabolic. The limit of a sequence of finitely generated Kleinian groups (where the number of generators is fixed) with generators converging is
again a Kleinian group by J\o rgensen's algebraic convergence theorem \cite{Jorg}.  Thus we have the following result:
\begin{lemma}\label{lem:bdry_discrete}
  All the points in the Riley slice boundary represent discrete groups. \qed
\end{lemma}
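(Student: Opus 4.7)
The plan is a direct application of Jørgensen's algebraic convergence theorem, which is already flagged in the paragraph preceding the lemma. The heart of the matter is to make sure the hypotheses of that theorem are met uniformly along an approximating sequence.

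First, I would fix $\mu_0 \in \partial \mathcal{R}$. By definition of topological boundary, and using that $\mathcal{R}$ is open (as a set of parameters producing a free discrete group with a prescribed quotient surface type, free discreteness being stable under small perturbations of the generators), there exists a sequence $(\mu_n)\subset \mathcal{R}$ with $\mu_n \to \mu_0$. For each $n$ the group $\Gamma_{\mu_n} = \langle X, Y_{\mu_n}\rangle$ is discrete (indeed free) by construction of $\mathcal{R}$, and the matrix generators converge entrywise to $X$ and $Y_{\mu_0}$, so the sequence converges algebraically to $\Gamma_{\mu_0}$.

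Next I would check the non-elementarity hypothesis needed to invoke Jørgensen's theorem. The boundary point $\mu_0$ is nonzero: the origin lies in the complement of $\mathcal{R}$ in its interior (the Jordan-curve boundary of $\mathcal{R}$ described in the introduction surrounds the origin), so $\mu_0 \neq 0$. Consequently $X$ fixes $\infty$, $Y_{\mu_0}$ fixes $0$, both are parabolic, and their fixed point sets are disjoint. Any Kleinian (in fact, any M\"obius) group generated by two parabolics with distinct fixed points is non-elementary, since the iterates of one parabolic move the fixed point of the other to produce infinitely many fixed points. Thus $\Gamma_{\mu_0}$ is non-elementary.

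Having both hypotheses in hand, Jørgensen's algebraic convergence theorem (cited as \cite{Jorg} in the text) asserts that the algebraic limit of a sequence of non-elementary Kleinian groups is again a Kleinian group, hence discrete. Applied to $\Gamma_{\mu_n} \to \Gamma_{\mu_0}$, this gives the conclusion. The only real point requiring care is the non-elementarity check at $\mu_0$; if one worried about $\mu_0$ being a cusp where $Y_{\mu_0}$ might happen to have its parabolic fixed point coincide with that of $X$, the concrete normal form $Y_\mu(z) = z/(\mu z+1)$ makes it immediate that the two fixed points remain $\infty$ and $0$ for every nonzero $\mu$, so this potential obstacle does not arise.
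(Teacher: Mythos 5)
Your proof is correct and follows the same route as the paper: the paper simply cites J{\o}rgensen's algebraic convergence theorem in the sentence preceding the lemma and marks it \qed, while you have filled in the hypotheses (algebraic convergence of the generators and non-elementarity, via the distinct parabolic fixed points $0$ and $\infty$ for $\mu_0\neq 0$). No substantive difference in approach.
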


The groups on the boundary of the Riley slice for which $ \Omega(\Gamma_\mu)/\Gamma_\mu$ is a disjoint union of triply punctured spheres (the surface that is naturally obtained by shrinking a
simple closed curve on $S_4^2$)  are called \textit{cusp groups}. A point in $\partial\mathcal{R}$ which is not a cusp group has empty ordinary set (since the quotient cannot support moduli) and is degenerate \cite{Bers}.

Parabolic M\"obius transformations are easily identified by the trace condition,
\begin{displaymath}
  \beta(f)=\tr^2(f)-4 = 0 \mbox{ if and only if $f$ is parabolic.}
\end{displaymath}
Here,  and in what follows,  we have abused notation and written $\tr(f)$ for the trace of the matrix representative of $f$ in $\PSL(2,\IC)$.  Keen and Series \cite{KS} study the boundary of the Riley
slice by considering what happens for a fixed slope $p/q$ as $\tr(f_{p/q})\to -2$,  $\tr(f_{p/q})\in \IR$.   In fact Keen and Series show that the Farey polynomial $ P_{p/q} $
has a branch so that the pleating ray
\begin{displaymath}
  \mathcal{P}_{p/q} = P_{p/q}^{-1}\big((-\infty,-2]\big)
\end{displaymath}
lies entirely in the closure of the Riley slice and meets the boundary at a point $\mu$ corresponding to a  cusp group  where $P_{p/q}(\mu)=-2$.  These cusp groups have a limit set consisting of a
circle packing; two examples are depicted in Figure~\ref{fig:circle_packings} and the approximate positions of low-order cusp points are given in Table~\ref{tab:farey_words}. A result of
McMullen \cite{McMCusps} shows these limits to be dense in the boundary of the Riley slice.

\begin{figure}
  \centering
  \includegraphics[width=\textwidth]{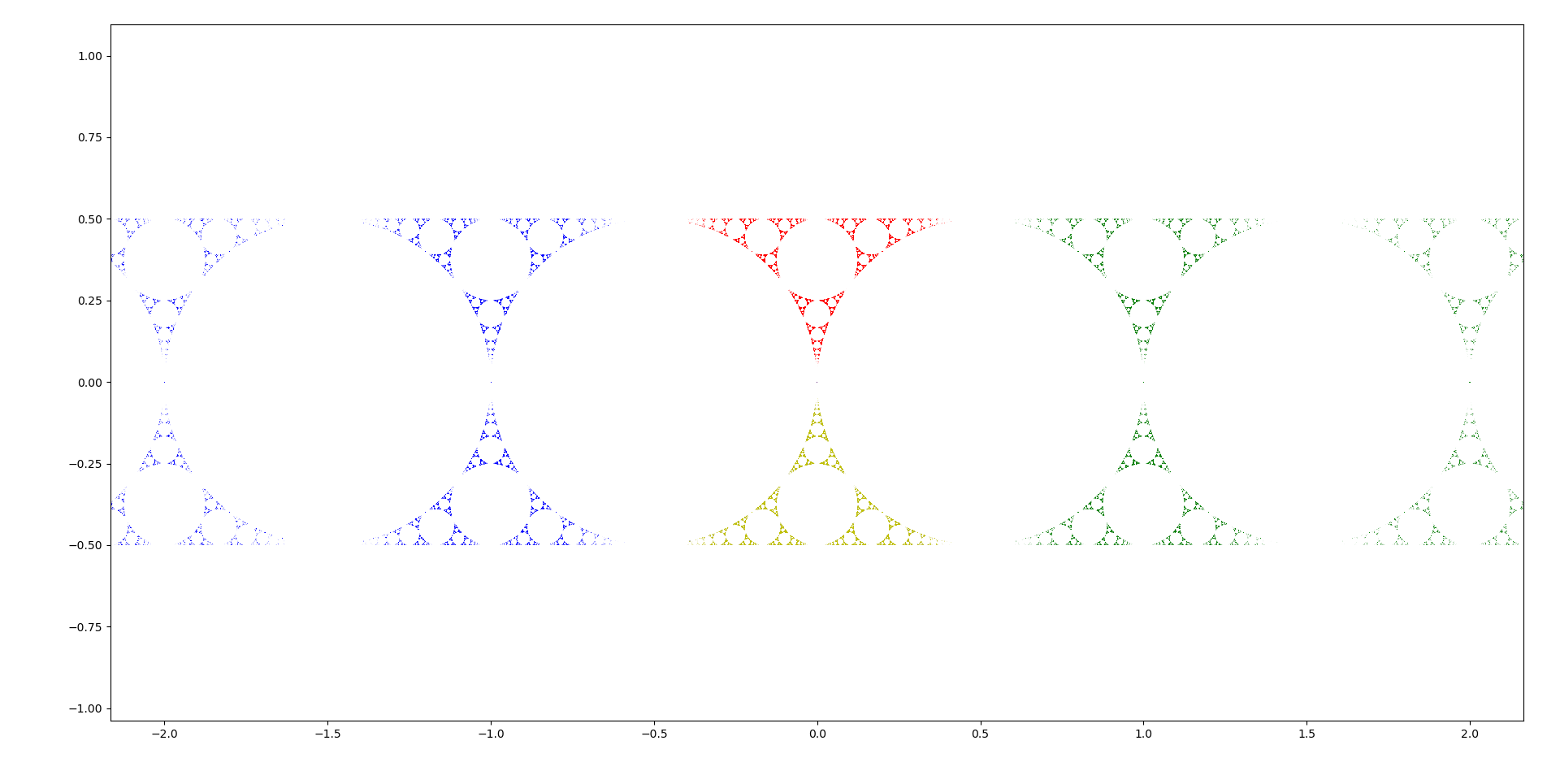}\\[0.5em]
  \includegraphics[width=\textwidth]{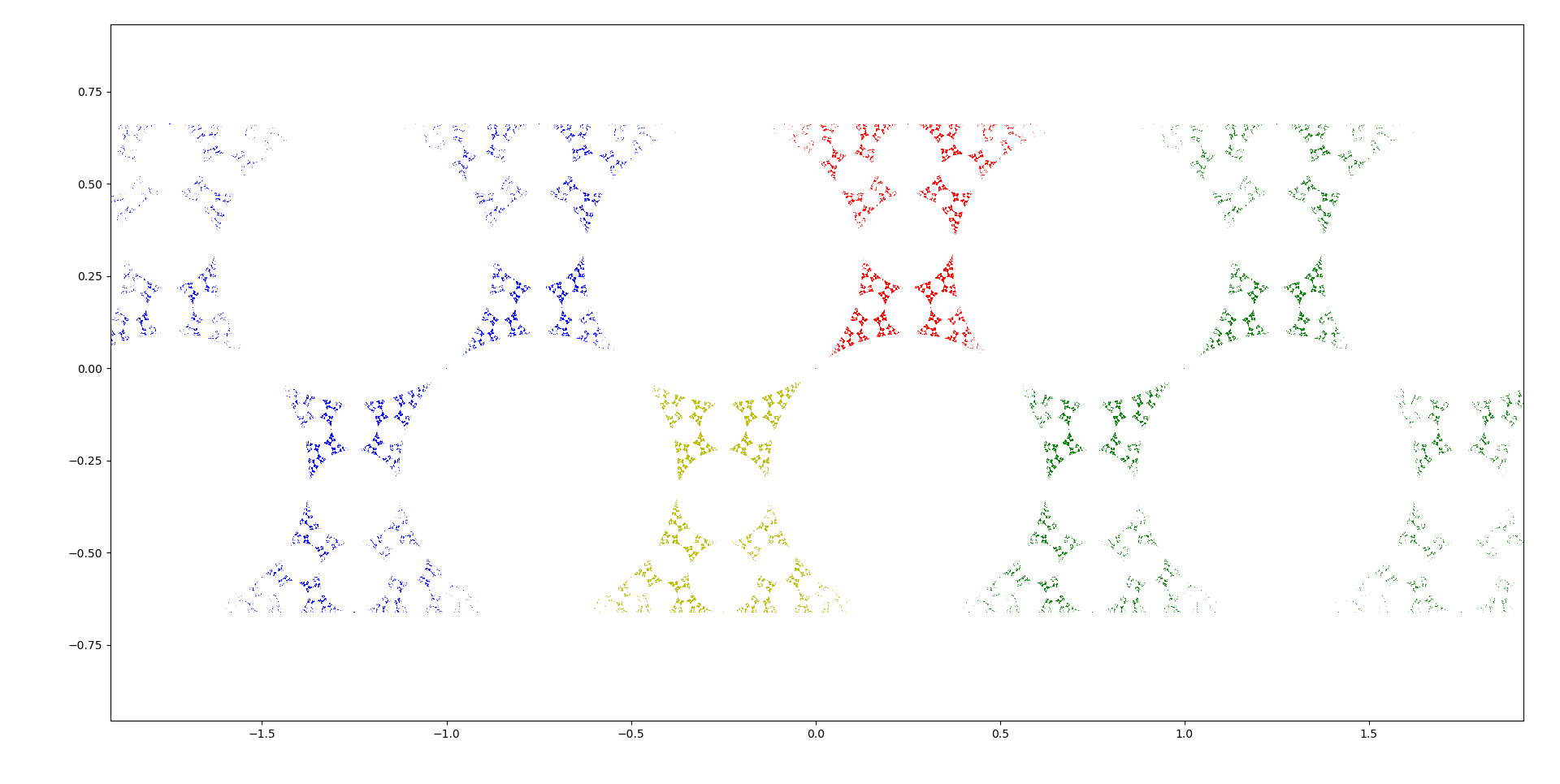}
  \caption{Circle packing limit sets of a cusp group. Top $1/2$,  bottom $2/3$. \label{fig:circle_packings}}
\end{figure}

\section{Farey polynomials}\label{sec:FP}
In this section, we prove various algebraic and dynamical results of the Farey polynomials.

\subsection{Discrete groups which lie on pleating ray extensions}
We begin with three elementary lemmata.
\begin{lemma}\label{lemma2}
  Let $\langle A,B\rangle$ be a subgroup of $\PSL(2,\IC)$ with $\tr^2 A = 4$, $\tr^2 B = 4$, $\tr(AB)-2=\nu\neq 0$, and such that neither $A$ nor $B$ is the identity.
  Then $\langle A,B \rangle$ is conjugate in $\PSL(2,\IC)$ to the group $\Gamma_\nu$.
\end{lemma}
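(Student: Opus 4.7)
The plan is to use a sequence of M\"obius conjugations to put the generators into the standard parabolic normal form that defines $ \Gamma_\nu $. First I would observe that the trace conditions $\tr^2 A = \tr^2 B = 4$, combined with $A, B \neq \id$, force both $A$ and $B$ to be parabolic, so each has a unique fixed point on $\oC$. Since $\PSL(2,\IC)$ acts triply transitively on $\oC$, I can first conjugate to send the fixed point of $A$ to $\infty$; this puts $A$ in upper-triangular form $\begin{pmatrix} 1 & t \\ 0 & 1 \end{pmatrix}$ with $t\neq 0$, and a further dilation $z \mapsto z/t$ (which fixes $\infty$) normalises $A$ to $X$.

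Next I would locate the fixed point $\beta$ of $B$ after this first conjugation. The key observation is that $\beta \neq \infty$: if $B$ also fixed $\infty$ then $B$ would be upper-triangular with ones on the diagonal, so $AB$ would again be upper-triangular with trace $2$, contradicting $\tr(AB) - 2 = \nu \neq 0$. With $\beta \in \IC$, I can conjugate by the translation $z \mapsto z - \beta$, which commutes with $X$ and therefore preserves the normalisation $A = X$, while sending the fixed point of $B$ to $0$. A parabolic with fixed point $0$ has the form $\begin{pmatrix} 1 & 0 \\ \mu & 1 \end{pmatrix}$ for some $\mu \neq 0$, so after these moves $B = Y_\mu$.

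Finally I would pin down $\mu$ by a direct trace calculation: a straightforward multiplication gives $AB = \begin{pmatrix} 1+\mu & 1 \\ \mu & 1 \end{pmatrix}$, so $\tr(AB) - 2 = \mu$. Combined with the hypothesis, $\mu = \nu$, and the conjugated group is exactly $\langle X, Y_\nu \rangle = \Gamma_\nu$.

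There is no real obstacle here; the statement is essentially a normal form lemma for pairs of parabolics with distinct fixed points. The only point requiring care is verifying that $\nu \neq 0$ is precisely the condition that rules out $A$ and $B$ sharing a fixed point, which is what allows the normalisation to $(0, \infty)$ and hence the reduction to the standard generators of $\Gamma_\nu$.
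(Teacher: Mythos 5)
Your proof is correct and follows essentially the same route as the paper: both arguments reduce to normalising the two parabolic fixed points to $\infty$ and $0$ by a M\"obius conjugation and then reading off $\nu$ from the trace of the product. Your explicit justification that $B$ cannot also fix $\infty$ (else $\tr(AB)=2$) is a worthwhile detail that the paper only asserts implicitly.
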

\begin{proof}
  Let $f$ and $g$ be the M\"obius transformations of $\oC$ representing $A$ and $B$.  Then $f$ and $g$ are parabolic with fixed points $z_f$ and $z_g$.  Since $\mu\neq 0$, the
  mappings  $f$ and $g$ do not commute and $z_f\neq z_g$.  Choose a M\"obius transformation $h$ so that $h(z_f)=\infty$, $h(z_g)=0$, and $hfh^{-1}(0)=1$.  Then $hfh^{-1}(z)=z+1$
  and $hgh^{-1}(z)=z/(\alpha z+1)$.  We compute that
  \begin{displaymath}
    2+\alpha = \tr hfh^{-1} hgh^{-1} = \tr fg = \tr(AB) = 2+\nu
  \end{displaymath}
  and the result follows.
\end{proof}

\begin{lemma}\label{lemma3}
  Let $\Gamma_\mu$ be discrete and $\mu\neq 0$.  Then for all rational slopes $p/q $,
  \begin{displaymath}
    \abs{Q_{p/q}(\mu)} \geq 1
  \end{displaymath}
  unless $P_{p/q}(\mu)=2$.  This estimate is sharp.
\end{lemma}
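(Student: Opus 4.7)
My plan is to deduce this directly from Shimizu's lemma applied to the word $W_{p/q}(\mu)$, using the identity provided by Theorem~\ref{thmiso}. Recall Shimizu's lemma: if a discrete subgroup of $\PSL(2,\IC)$ contains the parabolic element $z \mapsto z + 1$, then for every other element $\begin{pmatrix} a & b \\ c & d \end{pmatrix}$ in the group, either $c = 0$ or $\abs{c} \geq 1$. Since $X \in \Gamma_\mu$ acts as $z \mapsto z+1$ and $\Gamma_\mu$ is discrete by hypothesis, Shimizu's lemma applies to every element of $\Gamma_\mu$, in particular to the matrix $W_{p/q}(\mu)$ with entries as in Equation~(\ref{eqn:word_coeffs}).

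The key point is then just bookkeeping: Shimizu's inequality asserts $c_{p/q}(\mu) = 0$ or $\abs{c_{p/q}(\mu)} \geq 1$, and Theorem~\ref{thmiso} identifies $c_{p/q}(\mu)$ with $Q_{p/q}(\mu) = P_{p/q}(\mu) - 2$. So the dichotomy transfers verbatim: either $P_{p/q}(\mu) = 2$ or $\abs{Q_{p/q}(\mu)} \geq 1$, which is exactly the statement of the lemma.

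For sharpness, I would exploit the remark following Theorem~\ref{thmiso}. When $\mu \in \IZ$ the group $\langle X, W_{p/q}\rangle$ is a subgroup of the modular group (hence discrete), and the isometric circle of $W_{p/q}(\mu)$, whose radius is $1/\abs{c_{p/q}(\mu)}$, is tangent to the vertical lines $\Re(z) = \pm \tfrac{1}{2}$ that bound the fundamental strip of $X$. That tangency is precisely the equality case of Shimizu's inequality. A concrete witness is already in Table~\ref{tab:farey_words}: $Q_{1/1}(\mu) = -\mu$ and $\mu = 1$ gives $\Gamma_1 \leq \PSL(2,\IZ)$ discrete with $\abs{Q_{1/1}(1)} = 1$.

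The main obstacle is conceptual rather than technical: one has to notice that once Theorem~\ref{thmiso} turns the trace polynomial $Q_{p/q}$ into a matrix entry, the classical parabolic-based rigidity result is exactly the right tool. No further estimate on Farey words is required.
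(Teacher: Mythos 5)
Your proposal is correct and follows essentially the same route as the paper: both invoke the Shimizu--Leutbecher inequality for a discrete group containing the translation $z\mapsto z+1$ (the paper phrases it via $1\leq \tr[f,h_{p/q}]-2=\abs{c}^2$, you via the equivalent entry form $c=0$ or $\abs{c}\geq 1$), then identify $c_{p/q}(\mu)$ with $Q_{p/q}(\mu)$ using Theorem~\ref{thmiso}, with the $c=0$ case giving exactly the exception $P_{p/q}(\mu)=2$. The only cosmetic difference is the sharpness witness: you use $\mu=1$ with slope $1/1$, while the paper uses the figure-eight knot group at $\mu_0=\frac{1}{2}(1+i\sqrt{3})$, also with slope $1/1$; both are valid.
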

\begin{proof}
  Write $W_{p/q}=\begin{pmatrix} a & b\\ c&d \end{pmatrix} $. Suppose first that $ c \neq 0 $. Then the Shimitzu--Leutbecher inequality \cite{Leut} applied
  to the discrete group $\langle f,h_{p/q}\rangle$ gives
  \begin{displaymath}
    1\leq \tr[f,h_{p/q}]-2 = \abs{c}^2=\abs{a+d-2}^2
  \end{displaymath}
  which is the desired result by Theorem~\ref{thmiso}. If $c=0$,  then $h_{p/q}$ is parabolic and also fixes $\infty$.

  The figure-eight knot-complement group is $\Gamma_{\mu_0}$ with $\mu_{0} = \frac{1}{2}(1+i\sqrt{3})$. With $p/q=1/1$ this shows the inequality to be sharp,  while the relator in this group is the $3/5$-Farey word, and $h_{3/5}=\mathrm{Id}$,  and
  \begin{displaymath}
    P_{3/5}(\mu)-2=-\mu (1 - \mu + \mu ^2)^2
  \end{displaymath}
  so $P_{3/5}(\mu_0)=2$.
\end{proof}

We remark that the Schubert normal form of the figure of eight knot complement is $5/3$.  More generally the relator in the two bridge knot or link complement with Schubert normal form $q/p$  is the $p/q$-Farey word.  Next we recall the following elementary result \cite[Lemma 3.2]{KS}.

\begin{lemma}\label{fuchs}
  Let $\Gamma_\mu$ be discrete.  If $h_{p/q}$ is real,  then $\langle f,h_{p/q}\rangle $ is Fuchsian.
\end{lemma}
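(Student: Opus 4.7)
My plan is first to parse the hypothesis. For a M\"obius transformation $h_{p/q}$ to be \emph{real} should mean that it admits a matrix representative with real entries, or equivalently that it preserves $\IR \cup \{\infty\} \subset \oC$ setwise. With this reading, the lemma should follow almost immediately from the definition of a Fuchsian group combined with the standing assumption that $\Gamma_\mu$ is discrete.

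I would begin by observing that $f(z) = z+1$ is represented by $\begin{pmatrix} 1 & 1 \\ 0 & 1 \end{pmatrix}$, which lies in $\SL(2,\IR) \subset \SL(2,\IC)$. By hypothesis $h_{p/q}$ also admits a real matrix representative, and since $h_{p/q} \in \Gamma_\mu$ comes with determinant $1$ in $\SL(2,\IC)$, we may take this representative in $\SL(2,\IR)$ (rather than in some non-identity coset of $\PSL(2,\IR)$). Consequently both generators of $\langle f, h_{p/q}\rangle$ project into $\PSL(2,\IR)$, and hence so does the whole subgroup.

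To conclude, I would invoke discreteness: as a subgroup of the discrete group $\Gamma_\mu$, the group $\langle f, h_{p/q}\rangle$ is itself discrete, and by definition a discrete subgroup of $\PSL(2,\IR)$ is Fuchsian.

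There is no substantial obstacle here, which is why the authors flag the result as elementary; the only point requiring mild care is the passage from ``real matrix representative'' to an element of $\PSL(2,\IR)$ specifically (as opposed to an orientation-reversing coset of it), which is handled cleanly by the determinant constraint inherited from $\SL(2,\IC)$.
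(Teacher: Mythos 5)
There is a genuine gap, and it lies in your parsing of the hypothesis rather than in the logic that follows it. In this paper ``$h_{p/q}$ is real'' means that $\tr h_{p/q} = P_{p/q}(\mu)$ is real, not that $h_{p/q}$ admits a matrix representative in $\SL(2,\IR)$. You can see this from how the lemma is invoked: in Theorem~\ref{thm:discrete_gps_on_pleating_rays} the authors write ``If $P_{p/q}(\mu)-2$ is real, then $\langle f,h_{p/q}\rangle$ is Fuchsian by Lemma~\ref{fuchs}'', and in the later discussion of products of parabolics they argue ``If $\tr(W_{p/q})\in \IR$, then the traces of $X$, $X^{-1}W_{p/q}$ and $W_{p/q}$ are real \dots and so $\langle X,W_{p/q}\rangle$ is Fuchsian.'' The cited source, \cite[Lemma 3.2]{KS}, is likewise a statement about real trace. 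In these applications $\mu$ is a genuinely complex parameter (a point on a pleating ray extension), so the entries of $W_{p/q}(\mu)$, being polynomials in $\mu$, are not real; only the trace is. Your argument --- both generators lie in $\PSL(2,\IR)$, hence so does the discrete group they generate --- proves a strictly weaker statement that does not cover the cases where the lemma is actually used.

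The intended argument goes through the rigidity of two-parabolic-generator groups rather than through literal reality of matrix entries. Write $\langle f,h_{p/q}\rangle = \langle f, f^{-1}h_{p/q}\rangle$; by Lemma~\ref{lemma1} the element $f^{-1}h_{p/q}$ is parabolic, so the group is generated by two parabolics, and by Lemma~\ref{lemma2} it is conjugate in $\PSL(2,\IC)$ to $\Gamma_\nu$ with $\nu = \tr(h_{p/q})-2$. The hypothesis makes $\nu$ real, so $\Gamma_\nu$ is a subgroup of $\PSL(2,\IR)$ and preserves the upper half plane; being a subgroup of the discrete group $\Gamma_\mu$, the group $\langle f,h_{p/q}\rangle$ is discrete, hence conjugate to a discrete subgroup of $\PSL(2,\IR)$, i.e.\ Fuchsian. (The paper's own proof compresses all of this into the single sentence that the group is generated by two parabolics whose product has real trace; the conjugation into $\PSL(2,\IR)$ via Lemma~\ref{lemma2} is the step your proposal is missing. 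Note also that Fuchsian means conjugate into $\PSL(2,\IR)$, not literally contained in it, which is exactly the flexibility the trace argument exploits.)
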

\begin{proof}
  The group $\langle f,f^{-1}h_{p/q} \rangle$ is generated by two parabolics whose product is hyperbolic.
\end{proof}

As a consequence of these results, we can characterise the traces of the Farey words for discrete groups on the pleating ray extensions.

\begin{theorem}\label{thm:discrete_gps_on_pleating_rays}
  Let $\Gamma_\mu$ be discrete and $\mu\neq 0$.  Let $p/q$ be a rational slope and $P_{p/q}(\mu)\in \IR$.  Then either
  \begin{enumerate}
    \item $P_{p/q}(\mu) \geq 6$ or $P_{p/q}(\mu)=2+4\cos^2\big(\frac{\pi}{r}\big)$, where $r\geq 3$, or
    \item $P_{p/q}(\mu) \leq -2$ or $P_{p/q}(\mu)=2-4\cos^2\big(\frac{\pi}{r}\big) = -2\cos\big(\frac{2\pi}{r}\big)$, where $r\geq 3$.
  \end{enumerate}
  In particular,  on the extension of the pleating ray $\mathcal{P}_{p/q}$,  that is $P_{p/q}^{-1}((-\infty,0])$, the only allowable values for a discrete group are
  \begin{displaymath}
    P_{p/q}(\mu)= -2\cos(\frac{2\pi}{r}), \quad r\geq 3
  \end{displaymath}
  Each of these values occurs.
\end{theorem}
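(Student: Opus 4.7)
The approach is to identify the two-generator subgroup $H := \langle f, h_{p/q}\rangle \leq \Gamma_\mu$ as a Riley group with real parameter, and to invoke the classical classification of discrete Riley groups along the real axis. By Lemma~\ref{lemma1} applied with the appropriate parity of $q$, the element $X^{-1}W_{p/q}$ is conjugate to either $Y$ or to $X^{-1}$, so $\tr(X^{-1}W_{p/q}) = 2$ and hence $f^{-1}h_{p/q}$ is parabolic. Thus $(f,\, f^{-1}h_{p/q})$ is a parabolic pair generating $H$, and provided $P_{p/q}(\mu) \neq 2$ (so that $Q_{p/q}(\mu)\neq 0$, the single exception by Lemma~\ref{lemma3}), Lemma~\ref{lemma2} applied with $\nu = \tr(f\cdot f^{-1}h_{p/q}) - 2 = P_{p/q}(\mu) - 2 =: \nu_0$ shows that $H$ is $\PSL(2,\IC)$-conjugate to $\Gamma_{\nu_0}$. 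Discreteness of $\Gamma_\mu$ passes to $H$ and hence to $\Gamma_{\nu_0}$.

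By hypothesis $\nu_0 \in \IR$. Since $Y_{\nu_0}$ preserves $\IH^2$ whenever $\nu_0$ is real, the conjugate group $\Gamma_{\nu_0}$ is a discrete Fuchsian group generated by two parabolics (this incidentally recovers Lemma~\ref{fuchs} in the conjugate picture, where $h_{p/q}$ becomes $X Y_{\nu_0}$, a real matrix). The problem then reduces to classifying two-parabolic discrete Fuchsian groups with the normalisation $X = z+1$ and $Y_{\nu_0} = z/(\nu_0 z + 1)$. Standard analysis via horocyclic ping-pong and the Poincar\'e polygon theorem shows that such a group is discrete precisely when either $\lvert\nu_0\rvert \geq 4$ (giving the free Schottky-Fuchsian case whose quotient is the thrice-punctured sphere), or the quotient $\IH^2 / \Gamma_{\nu_0}$ is a triangle orbifold with signature $(0; r, \infty, \infty)$ for some $r\geq 3$. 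Matching the trace of the elliptic product built from $X$ and $Y_{\nu_0}$ against the triangle-group invariants, and using the symmetry $\Gamma_{\nu_0} = \Gamma_{-\nu_0}$ coming from $Y_{-\nu_0} = Y_{\nu_0}^{-1}$, pins the isolated discrete values down to $\lvert\nu_0\rvert = 4\cos^2(\pi/r)$. Substituting $\nu_0 = P_{p/q}(\mu) - 2$ produces the four cases of the stated dichotomy.

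For the ``in particular'' claim, restrict the dichotomy to $P_{p/q}(\mu) \leq 0$: case~(1) is automatically excluded (all its values lie in $[3,\infty)$), while case~(2) collapses to $P_{p/q}(\mu)\leq -2$ (the Riley-slice portion of the ray) together with the isolated values $P_{p/q}(\mu) = -2\cos(2\pi/r) \in (-2, 1]$. That each such isolated value is realised by an honest discrete $\Gamma_\mu$ is witnessed by the classical two-bridge link orbifold group of slope $p/q$ with cone angle $2\pi/r$ along the bridge axis, whose discreteness and realisation as some $\Gamma_\mu$ is standard. The principal obstacle is the real-axis classification in the middle paragraph: Lemma~\ref{lemma3} by itself provides only the Shimizu--Leutbecher bound $\lvert\nu_0\rvert \geq 1$, so narrowing down to the listed values in $(-4,4)$ requires the full Poincar\'e polygon analysis or, equivalently, the observation that after rescaling by $2\cos(\pi/r)$ the group $\Gamma_{-4\cos^2(\pi/r)}$ embeds in the Hecke triangle group $\langle z \mapsto z + 2\cos(\pi/r),\, z \mapsto -1/z\rangle$.
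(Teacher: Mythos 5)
Your proposal is correct and follows essentially the same route as the paper: reduce via Lemma~\ref{lemma1} and Lemma~\ref{lemma2} to the real-parameter two-parabolic group $\Gamma_{\nu_0}$ with $\nu_0 = P_{p/q}(\mu)-2$, note that it is Fuchsian, quote the known classification of discrete real Riley groups to get the dichotomy, and realise the exceptional values by the Heckoid/two-bridge orbifold groups. The only cosmetic difference is that the paper carries out the classification step on the $\IZ_2$-extension $\langle f,\Phi\rangle$ (a parabolic and a half-turn) and cites the $(\beta,\beta',\gamma)$-parameter classification of \cite{GGM}, whereas you classify the two-parabolic Fuchsian group directly; these two formulations of the classification are equivalent.
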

\begin{proof}
  The M\"obius transformation $f^{-1}h_{p/q}$ is parabolic.  There is an involution $\Phi$ conjugating $f^{-1}$ to $f^{-1}h_{p/q}$.  The group $\langle f,h_{p/q}\rangle$ is at most index two in $\langle \Phi,f\rangle$ and hence the latter is discrete. If $P_{p/q}(\mu)-2$ is real,  then $\langle f,h_{p/q}\rangle$ is Fuchsian by Lemma~\ref{fuchs}.  We have (in the notation of \cite{GGM})
  \begin{align*}
  \gamma(f,\phi) & = \tr [f,\phi]-2=\tr (f \Phi f^{-1} \Phi^{-1})-2=\tr (f \Phi f^{-1} \Phi^{-1})-2 \\
                 & = \tr (f f^{-1}h_{p/q}) -2 =  \tr h_{p/q} -2 = P_{p/q}(\mu)-2.
  \end{align*}
  and also
  \begin{displaymath}
    \beta(f)=\tr^2(f)-4=0,\quad \beta(\phi)=-4.
  \end{displaymath}
  Then with the assumption that $\gamma(f,\phi)\neq 0$ the discussion following (4.9) of \cite[Theorem 4.5]{GGM} tells us that $\langle f,\Phi \rangle$ is discrete if and only if either
  \begin{itemize}
    \item $ \gamma(f,\phi) \geq 4$,  or $\gamma(f,\Phi)=4\cos^2{\frac{\pi}{r}}$, $r\geq 3$,  or
    \item $ \gamma(f,\phi) \leq -4$,  or $\gamma(f,\Phi)=-4\cos^2{\frac{\pi}{r}}$, $r\geq 3$.
  \end{itemize}
  This is the statement of the theorem.

  The papers \cite[\S 2.]{A1} and \cite{A2} identify the group $\langle X,Y \mid (W_{p/q})^{r}=1\rangle$ in the complement of the Riley slice and as $\mu$ determines the group uniquely up to conjugacy the result follows.
\end{proof}

\subsection{Dynamical properties}\label{sec:dynamics}
We first give a short overview of the dynamical systems terminology which we shall use (following for example \cite{silverman}). A \textit{dynamical system} is a set $ S $ (the \textit{stable region})
together with a set $ \Phi $ of functions $ \phi : S \to S $ closed under iteration; in our case we will actually have an entire semigroup $ \mathcal{Q} $ of such functions. If $ S $ is a
metric space, the \textit{Fatou set} of a dynamical system is the maximal open set of $ S $ on which the functions of $ \Phi $ are equicontinuous; the \textit{Julia set} is the complement
of the Fatou set.\footnote{The Fatou set is analogous to the ordinary set of a Kleinian group, while the Julia set is analogous to the limit set.} The Julia set is often `thin' and so we want
to `thicken' it by `filling in the interior'. This motivates the definition of the \textit{filled Julia set} which has the Julia set as boundary: namely, if $ S $ happens to be a field $ k $, with complete
metric coming from some absolute value $ \abs{\cdot}_v $, we define the filled Julia set of $ \Phi $ to be
\begin{displaymath}
  \mathcal{K}(\Phi) := \{ x \in k : \sup_{\phi \in \Phi} \abs{\phi(x)} < \infty \}.
\end{displaymath}
(Of course in this paper we are working only over $ \mathbb{C} $, so all of this makes sense.) The complement of $ \mathcal{K}(\Phi) $ is the \textit{attracting basin of $ \infty $}. Finally, suppose $ \phi $ is
a rational function over $ \IC $ and let $ x $ be a fixed point of $ \alpha $. Then we variously say that $ x $ is
\begin{itemize}
  \item \textit{superattracting} if $ \phi'(x) = 0 $,
  \item \textit{attracting} if $ \abs{\phi'(x)} < 1 $,
  \item \textit{neutral} if $ \abs{\phi'(x)} = 1 $, and
  \item \textit{repelling} if $ \abs{\phi'(x)} > 1 $.
\end{itemize}

With all this in mind, our first main theorem of the paper is the following result.
\begin{theorem} \label{thm:rec}
  For each rational slope $p/q$ we have $Q_{p/q}(\mathcal{R})\subset \mathcal{R}$.
\end{theorem}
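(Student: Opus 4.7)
The plan is to identify $\Gamma_{Q_{p/q}(\mu)}$ with a subgroup of $\Gamma_\mu$ and transport Kleinian properties across the identification.

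The first step is algebraic. By Lemma~\ref{lemma1}, for either parity of $q$ I may write $W_{p/q} = X \cdot A$ with $A$ of the form $u X^{-1} u^{-1}$ or $u Y u^{-1}$; in either case $A$ is a parabolic element of $\Gamma_\mu$. Hence the subgroup $H := \langle X, W_{p/q} \rangle = \langle X, A \rangle$ is generated by two parabolics, with $\tr(XA) = \tr(W_{p/q}) = P_{p/q}(\mu)$. Setting $\nu := P_{p/q}(\mu) - 2 = Q_{p/q}(\mu)$, Lemma~\ref{lemma2} applies---the required $\nu \neq 0$ holds for $\mu \in \mathcal{R}$ since there $W_{p/q}$ is loxodromic---and shows that $H$ is conjugate in $\PSL(2, \IC)$ to $\Gamma_{Q_{p/q}(\mu)}$.

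Second, I transfer properties. Because $\Gamma_\mu$ is free and discrete, so is $H$. The generators $X$ and $A$ cannot commute, because otherwise $A$ would fix $\infty$ and so lie in $\langle X \rangle$, forcing $W_{p/q}$ to be a power of $X$, which it is not; thus $H$ has rank two. Moreover $\Omega(H) \supseteq \Omega(\Gamma_\mu) \neq \emptyset$. I now invoke the classification recalled in the introduction following \cite{MS}: a free discrete rank-2 Kleinian group with two parabolic generators has Riemann surface quotient either empty, a disjoint union of two thrice-punctured spheres, or a four-times punctured sphere. The first option is excluded by the nonemptiness of $\Omega(H)$, so $Q_{p/q}(\mu) \in \overline{\mathcal{R}}$.

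Third, I upgrade to $\mathcal{R}$. Since $Q_{p/q}$ is a nonconstant polynomial, it is an open map, so $Q_{p/q}(\mathcal{R})$ is open in $\IC$. By \cite{A3}, $\partial \mathcal{R}$ is a Jordan curve, and in particular every boundary point is the limit of points in the complement of $\overline{\mathcal{R}}$; hence no nonempty open subset of $\overline{\mathcal{R}}$ can meet $\partial \mathcal{R}$. Therefore $Q_{p/q}(\mathcal{R}) \subseteq \mathcal{R}$.

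The main obstacle is the second step: the clean application of both Lemma~\ref{lemma2} and the trichotomy of \cite{MS} depends on a cluster of geometric inputs---parabolicity of $A$, non-commutation of $X$ and $A$, nonemptiness of $\Omega(H)$, and genuine freeness of rank two---each ultimately traceable to the defining property that $\mu \in \mathcal{R}$ forces every Farey word to be loxodromic. The openness argument in the third step is by comparison a soft topological fact.
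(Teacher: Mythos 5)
Your proposal is correct, and its core is the same as the paper's: write $W_{p/q}=X\cdot(X^{-1}W_{p/q})$, use Lemma~\ref{lemma1} to see that both factors are parabolic, and apply Lemma~\ref{lemma2} to conclude that $\langle X, W_{p/q}\rangle$ is conjugate to $\Gamma_{\nu}$ with $\nu=\tr(W_{p/q})-2=Q_{p/q}(\mu)$, then transfer discreteness, freeness and nonemptiness of the ordinary set from $\Gamma_\mu$ to this subgroup. You are in fact a little more careful than the paper in two places: you verify the hypothesis $\nu\neq 0$ of Lemma~\ref{lemma2} (via loxodromicity of Farey words on $\mathcal{R}$, which the paper does not mention), and you note that non-commutation of the two parabolic generators is needed so that the subgroup genuinely has rank two. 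Where you diverge is the endgame. The group-theoretic argument only yields $\nu\in\overline{\mathcal{R}}$, and the two proofs exclude the boundary differently: the paper does it intrinsically, observing that $\Lambda(\tilde\Gamma)\subset\Lambda(\Gamma_\mu)$ is a M\"obius image of $\Lambda(\Gamma_\nu)$, so $\Gamma_\nu$ cannot be a cusp or degenerate group (whose limit sets contain circles or have positive measure) since $\Lambda(\Gamma_\mu)$ is totally disconnected --- an argument the paper leaves largely implicit; you instead use the soft fact that $Q_{p/q}$ is an open map together with the theorem of \cite{A3} that $\partial\mathcal{R}$ is a Jordan curve, so that no nonempty open subset of $\overline{\mathcal{R}}$ meets $\partial\mathcal{R}$. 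Your route is rigorous as stated but imports a deep external result (and note that the Jordan curve property really is needed here: mere openness of $\mathcal{R}$ would not rule out $\overline{\mathcal{R}}$ having interior points on its boundary, as a slit domain shows); the paper's route stays inside the limit-set picture but would need the total disconnectedness of $\Lambda(\Gamma_\mu)$ spelled out to be complete. Either way the theorem follows.
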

\begin{proof}
  Let $\Gamma_\mu=\langle f,g \rangle$ and let $h_{p/q}$ be the transformation corresponding to the Farey word $ W_{p/q}(\mu) $.  Consider the group
  \begin{displaymath}
    \tilde{\Gamma}  = \langle f,h_{p/q} \rangle = \langle f,f^{-1} h_{p/q} \rangle
  \end{displaymath}
  This group is generated by two parabolics by Lemma~\ref{lemma1}. Thus $\tilde{\Gamma}$ is a conjugate of $\Gamma_\nu$ where
  \begin{displaymath}
    \nu=\tr( f f^{-1} h_{p/q})-2  =  P_{p/q}(\mu)-2
  \end{displaymath}
  by Lemma~\ref{lemma2}.  As a conjugate of a  subgroup of $\langle f,g \rangle$ the group $\Gamma_\mu$ is discrete.  It is also free with nonempty ordinary set,
  moreover $\Lambda(\tilde{\Gamma})$ is a subset of $\Lambda(\Gamma_\mu)$ and the M\"obius image of $\Lambda(\Gamma_\nu)$.    Hence $\nu\in \mathcal{R}$.
\end{proof}

\begin{corollary}
  For each rational slope $p/q$ we have that the algebraic set
  \begin{displaymath}
    \mathbf{Z}(Q_{p/q}) = \{z\in \IC: Q_{p/q}(z)=0 \}
  \end{displaymath}
  is contained within the exterior of the closure of the Riley slice, i.e. within $ \IC\setminus \overline{\mathcal{R}} $.
\end{corollary}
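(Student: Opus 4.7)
The plan is to deduce the corollary from Theorem~\ref{thm:rec} by a short continuity argument, once one checks that the origin lies outside the closed Riley slice. Since $Q_{p/q}$ is a polynomial in $\mu$ and therefore continuous on $\IC$, the inclusion $Q_{p/q}(\mathcal{R})\subset\mathcal{R}$ established in Theorem~\ref{thm:rec} immediately upgrades to $Q_{p/q}(\overline{\mathcal{R}})\subset\overline{\mathcal{R}}$: if $\mu_n\in\mathcal{R}$ converges to $\mu\in\overline{\mathcal{R}}$, then $Q_{p/q}(\mu_n)\in\mathcal{R}\subset\overline{\mathcal{R}}$ converges to $Q_{p/q}(\mu)$, and $\overline{\mathcal{R}}$ is closed. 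Consequently, if some $\mu\in\overline{\mathcal{R}}$ were a root of $Q_{p/q}$, we would have $0=Q_{p/q}(\mu)\in\overline{\mathcal{R}}$, so the whole proof reduces to verifying that $0\notin\overline{\mathcal{R}}$.

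For this reduction I would combine two ingredients already present in the paper. First, every point of $\overline{\mathcal{R}}$ represents a discrete group: interior points give free discrete Schottky groups by the definition of $\mathcal{R}$, while boundary points are discrete by Lemma~\ref{lem:bdry_discrete}. Second, the Shimizu--Leutbecher inequality provides the lower bound $\abs{\mu}\geq 1$ for any such non-trivial discrete $\Gamma_\mu$; this is precisely the $1/1$-slope instance of Lemma~\ref{lemma3}, where $Q_{1/1}(\mu)=-\mu$ and $P_{1/1}(\mu)=2-\mu\neq 2$ whenever $\mu\neq 0$, forcing $\abs{\mu}=\abs{Q_{1/1}(\mu)}\geq 1$. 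Hence $\overline{\mathcal{R}}\subset\{\mu\in\IC:\abs{\mu}\geq 1\}$, and in particular $0$ lies in its exterior. Combined with the previous paragraph, this contradiction shows $\mathbf{Z}(Q_{p/q})\cap\overline{\mathcal{R}}=\emptyset$, which is exactly the claim $\mathbf{Z}(Q_{p/q})\subset\IC\setminus\overline{\mathcal{R}}$.

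There is essentially no genuine obstacle in this argument: the dynamical work has already been done in Theorem~\ref{thm:rec}, and the separation of $0$ from $\overline{\mathcal{R}}$ is delivered by Shimizu--Leutbecher in the form already recorded in Lemma~\ref{lemma3}. The only point requiring any care is the continuity extension of the inclusion $Q_{p/q}(\mathcal{R})\subset\mathcal{R}$ to the closure, which is painless because $Q_{p/q}$ is polynomial and $\overline{\mathcal{R}}$ is closed by construction. The corollary is therefore best viewed as a packaging statement combining the invariance theorem with the Shimizu--Leutbecher bound applied at the simplest Farey slope.
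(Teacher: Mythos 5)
Your argument is correct and is essentially the one the paper intends (the corollary is stated there without proof, as an immediate consequence of Theorem~\ref{thm:rec}): you correctly isolate the two facts needed, namely that continuity upgrades $Q_{p/q}(\mathcal{R})\subset\mathcal{R}$ to $Q_{p/q}(\overline{\mathcal{R}})\subset\overline{\mathcal{R}}$, and that $0\notin\overline{\mathcal{R}}$. Your observation that the separation of $0$ from $\overline{\mathcal{R}}$ must come from the slope-$1/1$ instance of Lemma~\ref{lemma3} --- where the exceptional case $P_{p/q}(\mu)=2$ collapses to $\mu=0$, unlike for a general slope --- is exactly the point that makes the reduction work.
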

Of course more is true here.
\begin{corollary}
  Let $z_0\not\in\overline{\mathcal{R}}$. For each rational slope $p/q$ we have
  \begin{displaymath}
    \{z\in \IC:Q_{p/q}(z)=z_0 \}  \subset \IC\setminus \overline{\mathcal{R}}.
  \end{displaymath}
\end{corollary}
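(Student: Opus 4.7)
The plan is to deduce this from Theorem~\ref{thm:rec} by a direct contrapositive argument, using only the continuity of $Q_{p/q}$ and the fact that $\overline{\mathcal{R}}$ is closed in $\IC$. The previous corollary is the special case $z_0=0$ (since $0\notin\overline{\mathcal{R}}$), and the same strategy works verbatim for an arbitrary $z_0\notin\overline{\mathcal{R}}$.

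First I would upgrade the forward-invariance statement of Theorem~\ref{thm:rec} from $\mathcal{R}$ to $\overline{\mathcal{R}}$. Since $Q_{p/q}$ is a polynomial, it is continuous on $\IC$, and Theorem~\ref{thm:rec} gives $Q_{p/q}(\mathcal{R})\subset \mathcal{R}\subset\overline{\mathcal{R}}$. For any $z\in\overline{\mathcal{R}}$, pick a sequence $z_n\in\mathcal{R}$ with $z_n\to z$; then $Q_{p/q}(z_n)\in\mathcal{R}$ and, by continuity, $Q_{p/q}(z)=\lim_n Q_{p/q}(z_n)\in\overline{\mathcal{R}}$. Hence
\begin{displaymath}
  Q_{p/q}\bigl(\overline{\mathcal{R}}\bigr)\subset \overline{\mathcal{R}}.
\end{displaymath}

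Now I would finish by contraposition. Suppose $z\in\IC$ satisfies $Q_{p/q}(z)=z_0$, and assume for contradiction that $z\in\overline{\mathcal{R}}$. The inclusion established in the previous paragraph yields $z_0=Q_{p/q}(z)\in\overline{\mathcal{R}}$, contradicting the hypothesis $z_0\notin\overline{\mathcal{R}}$. Therefore $z\in\IC\setminus\overline{\mathcal{R}}$, which is the desired containment.

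There is really no serious obstacle here: the statement is a formal consequence of the invariance in Theorem~\ref{thm:rec} together with the closedness of $\overline{\mathcal{R}}$ and continuity of the polynomial $Q_{p/q}$. The only point that needs a line of justification is the passage from invariance of $\mathcal{R}$ to invariance of $\overline{\mathcal{R}}$, which is handled by the standard approximation argument above.
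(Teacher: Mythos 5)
Your argument is correct, and it is the evident one: the paper states this corollary without proof as an immediate consequence of Theorem~\ref{thm:rec}, and your route (upgrade $Q_{p/q}(\mathcal{R})\subset\mathcal{R}$ to $Q_{p/q}(\overline{\mathcal{R}})\subset\overline{\mathcal{R}}$ by continuity, then contrapose) is exactly the intended formal deduction. Nothing is missing.
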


The semigroup generated by the polynomials
\begin{displaymath}
  {\cal Q}=\langle Q_{p/q}: \mbox{ $p/q$ is a rational slope} \rangle
\end{displaymath}
with the operation of functional composition now sets up a dynamical system on $\oC$ for which $\mathcal{R}$ lies in the stable region.  No filled Julia set
for any polynomial $Q_{p/q}$ can meet $\mathcal{R}$; for three examples, see Figure~\ref{fig:julias}.  Equivalently, $\mathcal{R}$
lies in the superattracting basin of $\infty$ for every polynomial. Every polynomial $Q_{p/q}$ has zero as a fixed point, $Q_{p/q}(0)=0$.

\begin{figure}
  \centering
  \includegraphics[width=0.32\textwidth]{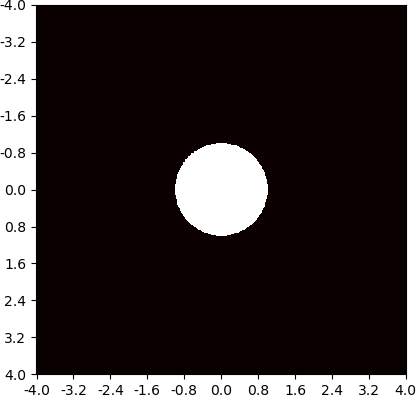}%
  \includegraphics[width=0.32\textwidth]{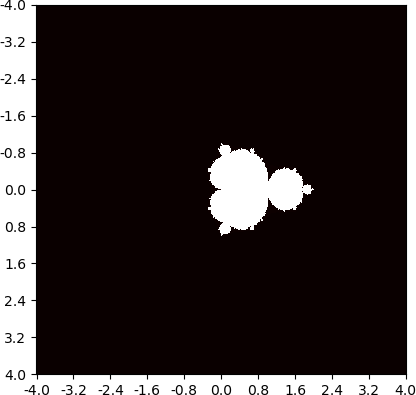}%
  \includegraphics[width=0.32\textwidth]{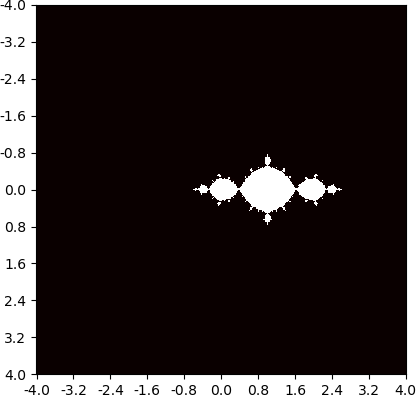}%
  \caption{Filled Julia sets for $ Q_{1/2} $ (left), $ Q_{1/3} $ (middle), and $ Q_{1/4} $ (right).\label{fig:julias}}
\end{figure}

Our computational evidence suggests the following conjecture:

\begin{conjecture}\label{conj}
  If $p/q$ is a rational slope,  then $Q_{p/q}$ factors as
   \begin{itemize}
    \item $Q_{p/q}(z) = \pm z u(z)^2$,  $u(0)=1$, when $q$ is odd.
    \item $Q_{p/q}(z) =  z^{2[(n+1)/2]} v(z)^2$,   where $q=2^n r$, $r$  odd.
   \end{itemize}
\end{conjecture}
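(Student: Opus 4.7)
The plan is to prove the conjecture by computing the $(2,1)$-entry of $W_{p/q}(\mu)$ directly, since Theorem~\ref{thmiso} identifies $Q_{p/q}$ with $c_{p/q}$. I split on the parity of $q$. When $q$ is odd, following the pattern in Lemma~\ref{lemma1} we may write $W_{p/q}=XUY^{\epsilon}U^{-1}$ for some $\epsilon\in\{+1,-1\}$ and some word $U$ in $x,y$; let $\delta(\mu)\in\mathbb{Z}[\mu]$ be the $(2,2)$-entry of the matrix $U(\mu)$ obtained by substituting $X,Y_\mu$ for $x,y$. A one-line matrix computation yields $c_{p/q}(\mu)=\epsilon\mu\,\delta(\mu)^2$. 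Evaluating at $\mu=0$, every factor $Y_\mu^{\pm 1}$ becomes $I$, so $U(0)$ is a product of powers of $X$, hence upper triangular with $1$'s on the diagonal; thus $\delta(0)=1$, and taking $u:=\delta$ gives $Q_{p/q}(\mu)=\pm\mu\,u(\mu)^2$ with $u(0)=1$, as required.

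When $q=2^n r$ with $r$ odd, Lemma~\ref{lemma1} gives \emph{two} factorisations $W_{p/q}=XUX^{-1}U^{-1}=VYV^{-1}Y^{-1}$. The commutator-with-$X$ formula yields $c_{p/q}(\mu)=\gamma(U)(\mu)^2$, where $\gamma(U)$ is the $(2,1)$-entry of $U$; this already accounts for the perfect-square factor in the conjecture. The commutator-with-$Y$ formula yields $c_{p/q}(\mu)=\mu\bigl(\delta(V)^2-1+\beta(V)\delta(V)\,\mu\bigr)$. Since $V(0)$ is upper triangular with $1$'s on the diagonal, $\delta(V)(0)=1$, so $\mu$ divides $\delta(V)^2-1$; consequently $\mu^2\mid c_{p/q}$, and hence (by unique factorisation in $\mathbb{Z}[\mu]$) $\mu\mid\gamma(U)$. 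This settles the cases $n=1,2$, where the target exponent $2\lfloor(n+1)/2\rfloor$ is $2$.

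The genuine obstacle is the inductive step for $n\ge 3$, where we need the strictly stronger divisibility $\mu^{\lceil n/2\rceil}\mid\gamma(U)$. I would proceed by induction on $n$, using the Farey-tree recursion for the words $W_{p/q}$ to show that $U_{p/q}$ is, up to conjugation by a power of $X$, a product involving one or two Farey words of denominator $q/2$. Applying the inductive hypothesis to those smaller words supplies an additional power of $\mu$ in $\gamma(U)$ at every second level of the recursion, which matches the $\lceil n/2\rceil$ formula. The combinatorial identities pinning down the precise shape of $U_{p/q}$ are the subject of the companion paper \autocite{EMSCombinatorics}; once those are in hand the remaining work here is routine matrix bookkeeping, so the essential difficulty is combinatorial rather than algebraic.
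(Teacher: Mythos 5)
First, note that the paper does not prove this statement at all: it is presented purely as a conjecture supported by computational evidence, so there is no proof of record to compare against. Measured on its own terms, your proposal makes real progress but does not close the conjecture. The odd case is correct and complete: writing $W_{p/q}=XUY^{\epsilon}U^{-1}$ (your $\epsilon$ correctly repairs the small imprecision in Lemma~\ref{lemma1}, since e.g.\ $W_{1/1}=xy^{-1}$), a direct computation does give $c_{p/q}=\epsilon\mu\,\delta(U)^2$ with $\delta(U)(0)=1$ because $U(0)$ is a power of $X$. Likewise your even-case computations are correct: $W=XUX^{-1}U^{-1}$ gives $c_{p/q}=\gamma(U)^2$, the second factorisation gives $\mu^2\mid c_{p/q}$, hence $\mu\mid\gamma(U)$, and this settles $n=1,2$.

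The genuine gap is exactly where you locate it, but it is larger than you suggest. For $n\ge 3$ you need $\mu^{\lceil n/2\rceil}\mid\gamma(U)$, and the inductive mechanism you propose --- that $U_{p/q}$ is, up to conjugation, built from Farey words of denominator $q/2$ --- does not match how Farey words are actually generated: the Farey parents $p_1/q_1,p_2/q_2$ of $p/q$ satisfy $q_1+q_2=q$ and $\gcd(q_1,q_2)=1$, so when $q$ is even \emph{both} parents have odd denominator (e.g.\ $5/8$ has parents $2/3$ and $3/5$), and no word of denominator $q/2$ appears in the standard recursion. So the combinatorial identity your induction rests on would have to be formulated and proved from scratch; deferring it to \autocite{EMSCombinatorics} is not a proof, and nothing in the present paper supplies it. A secondary, smaller omission: in the even case the conjecture is only meaningful if the exponent $2\lfloor(n+1)/2\rfloor$ is exact (compare $u(0)=1$ in the odd case), i.e.\ one must also show $v(0)\neq 0$, equivalently that $\mu^{\lceil n/2\rceil+1}\nmid\gamma(U)$; your argument gives only the lower bound on the order of vanishing. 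As it stands you have proved the odd half of the conjecture and the even half for $q\equiv 2 \pmod 4$ and $q=4r$ with $r$ odd, which is worthwhile and should be recorded as such, but the general even case remains open.
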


We wish to explore this a little further.  The following lemma is a simple consequence of the form of a Farey word.
\begin{lemma}
  As $\mu\to 0$ we have
  \begin{itemize}
    \item $W_{p/q}(\mu) \to X$ if $q$ is odd.
    \item $W_{p/q}(\mu) \to \mathrm{Id} $ if $q$ is even. \qed
  \end{itemize}
\end{lemma}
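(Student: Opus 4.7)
The plan is to observe that each matrix entry of $W_{p/q}(\mu)$ is a polynomial in $\mu$ (as noted in the discussion around Equation~\eqref{eqn:word_coeffs}), so $\mu \mapsto W_{p/q}(\mu)$ is continuous on $\IC$ and
\begin{displaymath}
\lim_{\mu \to 0} W_{p/q}(\mu) = W_{p/q}(0).
\end{displaymath}
Since $Y_0 = \mathrm{Id}$, evaluation at $\mu = 0$ has the same effect as the group-theoretic substitution $y \mapsto 1$ in the abstract Farey word $W_{p/q} \in \langle x, y\rangle$. So the lemma reduces to a purely combinatorial statement about this substitution, which I would read directly off of the normal forms supplied by Lemma~\ref{lemma1}.

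When $q$ is even, Lemma~\ref{lemma1} presents $W_{p/q}$ in the form $v y v^{-1} y^{-1}$ for some $v \in \langle x, y\rangle$. Letting $v_0$ denote the image of $v$ under the substitution $y \mapsto 1$, the same substitution sends $W_{p/q}$ to $v_0 \cdot 1 \cdot v_0^{-1} \cdot 1 = 1$, so $W_{p/q}(0) = \mathrm{Id}$. When $q$ is odd, Lemma~\ref{lemma1} presents $W_{p/q}$ in the form $x u y u^{-1}$; letting $u_0$ denote the image of $u$ under $y \mapsto 1$, the substitution sends $W_{p/q}$ to $x u_0 u_0^{-1} = x$, so $W_{p/q}(0) = X$.

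There is no substantive obstacle here; the only point worth flagging is that $u$ and $v$ may themselves contain letters $y^{\pm 1}$, but those are harmless because they sit inside the matched pairings $u(\cdots)u^{-1}$ and $v(\cdots)v^{-1}$ and cancel after the substitution regardless of internal structure. The whole argument is thus a one-line corollary of the normal forms in Lemma~\ref{lemma1} combined with polynomial continuity of the matrix entries of $W_{p/q}(\mu)$.
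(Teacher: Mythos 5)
Your proposal is correct and follows exactly the route the paper intends: the lemma is stated with no written proof beyond the remark that it is ``a simple consequence of the form of a Farey word,'' and your argument---polynomial continuity of the matrix entries reduces the claim to the substitution $y \mapsto 1$ applied to the normal forms $vyv^{-1}y^{-1}$ (for $q$ even) and $xuyu^{-1}$ (for $q$ odd) from Lemma~1---is precisely that simple consequence, spelled out. No gaps.
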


From this lemma, we easily classify the fixed point type of $ 0 $.
\begin{theorem}  \label{thm:superattractor}
  If $q$ is even, then $0$ is a superattracting fixed point for $Q_{p/q}$. If $ q $ is odd, then $ 0 $ is not superattracting.
\end{theorem}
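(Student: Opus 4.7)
The main tool is the isomorphism in Theorem~\ref{thmiso}, which lets us replace $Q_{p/q} = P_{p/q}-2$ by the bottom-left entry $c_{p/q}$ of the matrix $W_{p/q}(\mu)$. The plan is to compute $c_{p/q}'(0)$ directly by differentiating the matrix product $W_{p/q}(\mu) = M_1(\mu)\cdots M_{2q}(\mu)$, where each factor $M_i \in \{X^{\pm 1}, Y_\mu^{\pm 1}\}$, and then evaluating at $\mu = 0$.

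Writing $Y_\mu = I + \mu E$ with $E = \begin{pmatrix} 0 & 0 \\ 1 & 0 \end{pmatrix}$, each factor $Y_\mu^{\pm 1}$ has derivative $\pm E$ while $X^{\pm 1}$ has zero derivative. The product rule gives
\begin{equation*}
  W_{p/q}'(\mu) = \sum_{i : M_i = Y_\mu^{\epsilon_i}} \epsilon_i\,(M_1 \cdots M_{i-1})\, E\, (M_{i+1} \cdots M_{2q}).
\end{equation*}
At $\mu = 0$ each $Y_\mu^{\pm 1}$ collapses to $I$, so the prefix around position $i$ reduces to $X^{a_i}$ and the suffix to $X^{b_i}$ for integers $a_i, b_i$. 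A direct calculation yields
\begin{equation*}
  X^{a_i}\, E\, X^{b_i} = \begin{pmatrix} a_i & a_i b_i \\ 1 & b_i \end{pmatrix},
\end{equation*}
so every term contributes exactly $1$ to the bottom-left entry. Combining with Theorem~\ref{thmiso},
\begin{equation*}
  Q_{p/q}'(0) \;=\; c_{p/q}'(0) \;=\; \sum_i \epsilon_i,
\end{equation*}
which is precisely the exponent sum of $y$ in the Farey word $W_{p/q}$.

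To finish, apply Lemma~\ref{lemma1}. For $q$ even, $W_{p/q} = v y v^{-1} y^{-1}$ is a commutator, so its $y$-exponent sum vanishes; hence $Q_{p/q}'(0) = 0$, and $0$ is superattracting. For $q$ odd, the form $W_{p/q} = x u y u^{-1}$ exposes a single uncancelled $y^{\pm 1}$, so the $y$-exponent sum is $\pm 1 \neq 0$, forcing $Q_{p/q}'(0) \neq 0$. The main technical hurdle is just the bookkeeping of the matrix product expansion; the essential point is the constant $1$ appearing in the bottom-left of $X^{a_i} E X^{b_i}$, which makes the derivative depend only on the signed count of $y$-occurrences and not on their positions within the word.
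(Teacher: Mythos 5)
Your argument is correct, but it takes a genuinely different route from the paper's. Both proofs begin by invoking Theorem~\ref{thmiso} to replace $Q_{p/q}$ by the entry $c_{p/q}$, but from there the paper works algebraically: substituting $d=c-a+2$ into $ad-bc=1$ gives the factorisation $c\,(a-b)=(a-1)^2$, and the order of vanishing of $c$ at $\mu=0$ is then read off from the values $a(0),b(0)$, which are $1,0$ when $W_{p/q}(0)=\mathrm{Id}$ ($q$ even, forcing $\mu^2\mid c$) and $1,1$ when $W_{p/q}(0)=X$ ($q$ odd, forcing a simple zero). You instead differentiate the matrix product term by term and observe that each $y^{\pm1}$-letter contributes exactly $\pm1$ to the $(2,1)$-entry of $W_{p/q}'(0)$, yielding the clean identity $Q_{p/q}'(0)=e_y(W_{p/q})$, the $y$-exponent sum; the dichotomy then follows from the commutator versus non-commutator structure in Lemma~\ref{lemma1}, since the exponent sum is the image under abelianisation and hence kills commutators. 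Your approach buys an explicit value for the multiplier (namely $\pm1$ in the odd case, which already points toward the neutrality of $0$ predicted by Conjecture~\ref{conj}), and it avoids a delicate point in the paper's odd-$q$ case, where one must know that $(a-1)^2$ vanishes to order \emph{exactly} two at $0$ before concluding that $c$ and $a-b$ each have simple zeros; the paper's approach is shorter and delivers the divisibility $\mu^2\mid c_{p/q}$ for $q$ even in one step. One small caution: the literal normal forms in Lemma~\ref{lemma1} would give $e_y=+1$ for $q$ odd, yet $W_{1/1}=xy^{-1}$ has $e_y=-1$, so those forms can only hold up to conjugation and inversion; your hedge to $\pm1$ is exactly the right fix, since the exponent sum is conjugation-invariant and only changes sign under inversion, and all you need is that it is nonzero.
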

\begin{proof}
  Since $ \det W_{p/q} = 1 $, by Theorem~\ref{thmiso} we have $ a+d-2 = c = Q_{p/q} $. Substituting $ ad - bc = 1 $ we obtain $ c(a-b) = (a-1)^2 $. If $ q $ is even,
  then $ a(\mu) \to 1 $ as $ \mu \to 0 $ so the right hand side of this equality is a polynomial with a double root at 0. On the left, $ a(\mu)-b(\mu) \to 1 $
  as $ \mu \to 0 $; in particular, $ (a-b)(0) \neq 0 $. Thus both roots at $ 0 $ must come from factors of $ c(\mu) $, i.e. $ \mu^2 \mid c(\mu) $ and so $ c'(0) = 0 $.

  For $ q $ odd, we have again $ c(a-b) = (a-1)^2 $; again the right side has a double root at 0, but on the left we have a factor $ (a-b) $ which becomes 0 at 0; since $c$
  also has a root at 0, it follows that both $ (a-b) $ and $ c $ have single roots at 0.
\end{proof}
\begin{remark}
  Of course it would follow easily from Conjecture~\ref{conj} that $ 0 $ is a \emph{neutral} fixed point.
\end{remark}

In \cite{M1} it is shown that the semigroup generated by all word polynomials has the complement of the Riley slice as its Julia set.  As such, the roots of the word polynomials are
dense in $\IC\setminus \mathcal{R}$ as backward orbits are dense in the Julia set. In the context of Farey words this suggests that the roots of all compositions of Farey words are dense.
However somewhat more appears true ---  See Figure~\ref{fig:TwoRileySlice}.

These pictures are quite quickly generated and give a good approximation to the Riley slice even for much smaller bounds on the denominators $q$.  Notice that in view of Lemma~\ref{lemma3}
there is an open pre-image of the unit disk about each point also lying in $\IC\setminus \overline{\mathcal{R}}$.

\begin{figure}
  \centering
  \includegraphics[width=\textwidth]{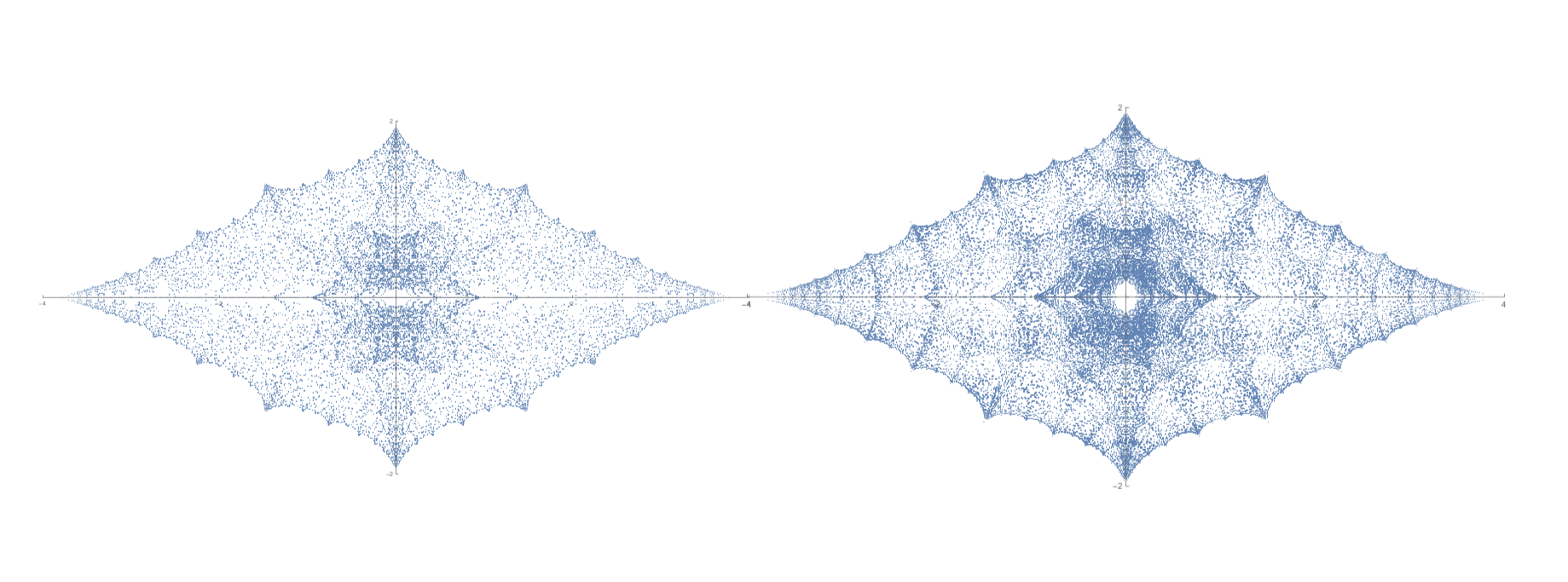}
  \caption{For all rational slopes with $q\leq 377$,  Left: Root set of the polynomials $Q_{p/q}$. This set lies in $\IC\setminus \overline{\mathcal{R}}$. Right: Root set of the equation $Q_{p/q}=-4$. This latter set lies in $\IC\setminus \mathcal{R}$ and contains all the cusp groups.  Notice the appearance of clustering around the pleating rays.\label{fig:TwoRileySlice}}
\end{figure}

\section{Neighbourhoods of rational pleating rays}\label{sec:NRPR}

In this section, we give some motivation and intuition for our second main result. Our first main result gave a method of approximating the Riley slice exterior using the Farey polynomials and some related Julia
sets; our second result is an approximation of the \emph{interior} using the Farey polynomials. Here is the precise statement:
\begin{theorem}\label{main}
  Let $P_{p/q}$ be a Farey polynomial.  Then there is a branch of the inverse of $ P_{p/q} $ such that
  \begin{displaymath}
    P_{p/q}^{-1}({\cal H}_{-2})\subset \mathcal{R}, \quad {\cal H}_{-2}=\{ \Re(z) < -2\}.
  \end{displaymath}
\end{theorem}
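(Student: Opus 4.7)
The plan is to identify the specific branch of $P_{p/q}^{-1}$ extending the Keen-Series parameterization of the pleating ray $\mathcal{P}_{p/q}$, continue it holomorphically across $\mathcal{H}_{-2}$, and show the image remains in $\mathcal{R}$. Keen and Series provide a real-analytic section $\sigma:(-\infty,-2]\to\overline{\mathcal{R}}$ of $P_{p/q}$ whose image is $\mathcal{P}_{p/q}$. Since $\mathcal{H}_{-2}$ is simply connected and contains $(-\infty,-2]$, monodromy extends $\sigma$ to a holomorphic map $\tilde\sigma:\mathcal{H}_{-2}\to\IC$ provided no critical values of $P_{p/q}$ encountered along the continued sheet lie inside $\mathcal{H}_{-2}$. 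Combining Theorem~\ref{thmiso} with the identity $c(a-b)=(a-1)^2$ appearing in the proof of Theorem~\ref{thm:superattractor} and the expected factorization of $Q_{p/q}$ from Conjecture~\ref{conj}, most critical points of $P_{p/q}$ project to the critical value $P_{p/q}=2$, which lies outside $\mathcal{H}_{-2}$; the remaining critical values must be handled by a direct analysis of the roots of $P_{p/q}'$.

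For the image, I would re-use the algebraic backbone of Theorem~\ref{thm:rec} in the opposite direction. For $z\in\mathcal{H}_{-2}$ and $\mu=\tilde\sigma(z)$, consider the subgroup $\tilde\Gamma=\langle f,h_{p/q}(\mu)\rangle\leq\Gamma_\mu$. By Lemma~\ref{lemma2} it is conjugate in $\PSL(2,\IC)$ to $\Gamma_{z-2}$. Since $\Re(z)<-2$ gives $\Re(z-2)<-4$ and hence $|z-2|>4$, the classical Lyndon-Ullman bound puts $z-2\in\mathcal{R}$, so $\tilde\Gamma$ is itself quasifuchsian of the correct topological type.

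The main obstacle is the ascent from the subgroup $\tilde\Gamma\in\mathcal{R}$ to the ambient group $\Gamma_\mu\in\mathcal{R}$. In general $\tilde\Gamma$ is of infinite index in $\Gamma_\mu$ (already for $p/q=1/2$ the abelianisation argument shows $\langle x,[x,y]\rangle$ has infinite index in $F_2$), so discreteness of $\tilde\Gamma$ does not formally imply that of $\Gamma_\mu$. I would close the gap by a connectedness argument: $\tilde\sigma(\mathcal{H}_{-2})$ is connected and already meets $\mathcal{R}$ along the open interior of the pleating ray, so if the image exited $\mathcal{R}$ it would cross $\partial\mathcal{R}$ at some $\mu_0$. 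By Lemma~\ref{lem:bdry_discrete}, $\Gamma_{\mu_0}$ would be discrete, and by McMullen's density of cusp groups in $\partial\mathcal{R}$, approximable by boundary groups with $P_{p'/q'}=\pm 2$ for some slope $p'/q'$. Combining this with the open condition $P_{p/q}(\mu_0)\in\mathcal{H}_{-2}$ and the rigidity for traces of Farey words on discrete groups (Theorem~\ref{thm:discrete_gps_on_pleating_rays}) would yield the contradiction. The hard part is precisely this boundary-exclusion step, which is where the refined estimates alluded to in the introduction (and presumably elaborated in Section~\ref{sec:PF}) are needed to go beyond the naive Lyndon-Ullman bound.
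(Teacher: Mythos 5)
There is a genuine gap, and you have located it yourself: the ascent from $\tilde\Gamma=\langle f,h_{p/q}\rangle\in\mathcal{R}$ to $\Gamma_\mu\in\mathcal{R}$. Your proposed boundary-exclusion argument does not close it. If the continued branch exited $\mathcal{R}$ at some $\mu_0\in\partial\mathcal{R}$, then $\Gamma_{\mu_0}$ is indeed discrete, but Theorem~\ref{thm:discrete_gps_on_pleating_rays} only constrains $P_{p/q}(\mu_0)$ when that value is \emph{real}; a generic crossing point has $P_{p/q}(\mu_0)$ non-real with $\Re P_{p/q}(\mu_0)<-2$, and nothing you cite forbids this. Likewise McMullen's density of cusps produces nearby boundary groups with $P_{p'/q'}=-2$ for \emph{other} slopes $p'/q'$, which says nothing about the slope $p/q$ at hand. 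The paper supplies the missing geometric input instead: when $\Re\tr W_{p/q}(\mu)<-2$ the isometric disks of $W_{p/q}$ are disjoint, and one builds an explicit precisely invariant ``canonical peripheral quasidisk'' $H_{p/q}$ bounded by the quasiline $L_{p/q}=\bigcup_{g}g(\ell_{p/q})$, where $\ell_{p/q}$ is a segment joining the isometric circles inside the fundamental domain $\tilde S$. The existence of this quasidisk forces $\Omega(\Gamma_\mu)/\Gamma_\mu$ to be $S^2_4$ (Lemma~\ref{lem6}), and an open/closed argument on $\mathcal{Z}_{p/q}=\{\mu:\Re P_{p/q}(\mu)<-2\}$ (Lemmas~\ref{lem:cpqopen} and~\ref{lem:closed}), seeded by the $F$-peripheral disks on the pleating ray (Lemma~\ref{priscan}), shows that the set of such $\mu$ is a union of components of $\mathcal{Z}_{p/q}$ containing the pleating-ray components. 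That construction is the entire content of the theorem and is exactly the step your proposal leaves open.

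A secondary problem is your first step: the analytic continuation of the Keen--Series section leans on Conjecture~\ref{conj}, which the paper states only as a conjecture, and you concede that the remaining critical values ``must be handled by a direct analysis,'' so the branch $\tilde\sigma$ is never actually constructed. The paper sidesteps this entirely by working with the open set $\mathcal{Z}_{p/q}$ and extracting the branch at the end from its connected-component structure. Your intermediate observation that $\tilde\Gamma$ is conjugate to $\Gamma_{P_{p/q}(\mu)-2}$, which lies outside the Lyndon--Ullman hull, is correct and is essentially Theorem~\ref{thm:rec} and Corollary~\ref{cor1} in disguise; but discreteness and freeness of a subgroup (indeed of $\Gamma_\mu$ itself, which already holds on all of $\overline{\mathcal{R}}$ by Lemma~\ref{lem:bdry_discrete}) cannot distinguish the four-times punctured sphere quotient from the cusped or degenerate cases, which is why the quasidisk construction is indispensable.
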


The bounds given in the theorem are illustrated in Figure~\ref{fig:RileyPleatedNbhds}.

\begin{figure}
  \centering
  \includegraphics[width=\textwidth]{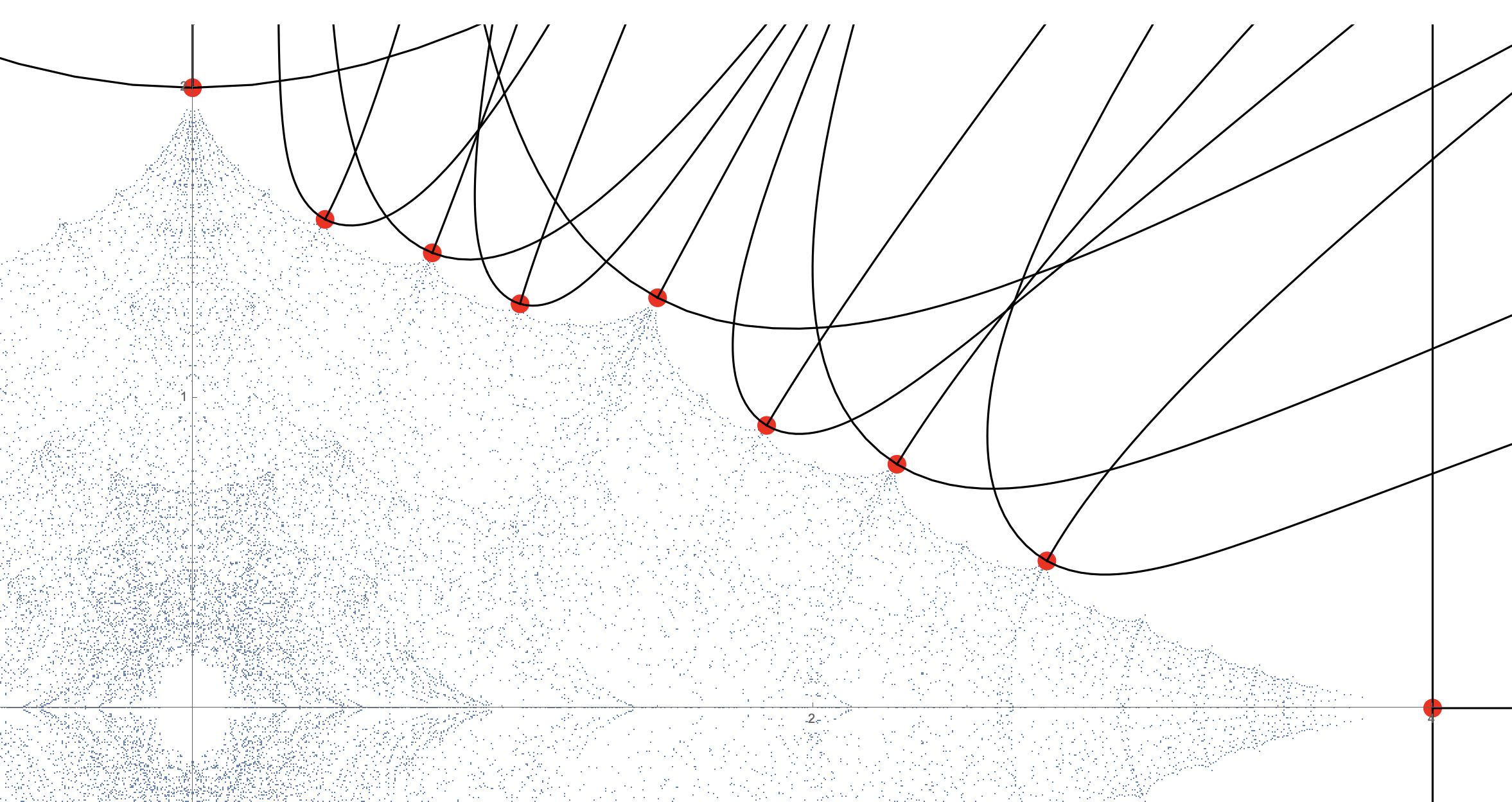}
  \caption{The Riley slice with neighbourhoods for our pleating ray values illustrated.\label{fig:RileyPleatedNbhds}}
\end{figure}

\subsection{Motivating remarks}
We make the following remarks that heuristically suggest why Theorem~\ref{main}  might be true and which underpin our proof.  The Riley slice $\mathcal{R}$ is topologically a punctured disk in the
plane and as such admits a hyperbolic metric which we denote $\rho_\mathcal{R}:\mathcal{R}\times\mathcal{R}\to [0,\infty)$.

\begin{theorem}\label{thm3}
  Let $\alpha$ be a curve in $\mathcal{R}$ which lies a bounded hyperbolic distance from a pleating ray (that is, there exists an $M<\infty$ such that for each $\mu_0\in\alpha$ there
  is $\nu_0\in \mathcal{P}_{p/q}$ with $\rho_\mathcal{R}(\mu_0,\nu_0)\leq M$).  Then each $\Gamma_{\mu_0}$ is quasiconformally conjugate to $\Gamma_{\nu_0}$ by a quasiconformal mapping
  of $\IC$ with distortion no more than $e^M$.
\end{theorem}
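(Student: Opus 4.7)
The plan is to reduce the statement to standard Teichm\"uller theory via the identification, mentioned in the introduction, that the intrinsic hyperbolic metric $\rho_\mathcal{R}$ on the annulus $\mathcal{R}$ agrees with the Teichm\"uller metric on $\mathcal{T}_{0,4}$ descended through the Dehn twist quotient. Given $\mu_0 \in \alpha$ and the nearby pleating ray point $\nu_0$, this identification means that the marked Riemann surfaces $S_{\mu_0} := \Omega(\Gamma_{\mu_0})/\Gamma_{\mu_0}$ and $S_{\nu_0} := \Omega(\Gamma_{\nu_0})/\Gamma_{\nu_0}$ lie at Teichm\"uller distance at most $M$ in $\mathcal{T}_{0,4}$ (after lifting through the covering $\mathcal{T}_{0,4} \to \mathcal{R}$; one lift will realise the bound since the Dehn twist acts by isometries). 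Thus by the definition of the Teichm\"uller metric, and approximating the infimum if necessary, there exists a quasiconformal homeomorphism $\phi : S_{\mu_0} \to S_{\nu_0}$ respecting markings with dilatation $K(\phi) \leq e^M$.

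The next step is to promote $\phi$ to a conjugation of the Kleinian groups themselves. Lift $\phi$ via the holomorphic covering maps $\Omega(\Gamma_{\mu_0}) \to S_{\mu_0}$ and $\Omega(\Gamma_{\nu_0}) \to S_{\nu_0}$ to a quasiconformal homeomorphism $\tilde\phi : \Omega(\Gamma_{\mu_0}) \to \Omega(\Gamma_{\nu_0})$; by construction $\tilde\phi$ is equivariant with respect to a type-preserving isomorphism $\Gamma_{\mu_0} \to \Gamma_{\nu_0}$, and its dilatation equals $K(\phi)$ since quasiconformality is a local property and the covers are conformal. Now pull back the Beltrami differential of $\tilde\phi$ to $\Omega(\Gamma_{\mu_0})$ and extend it by zero across the limit set $\Lambda(\Gamma_{\mu_0})$ (which has measure zero, a well-known property in this setting). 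The resulting measurable Beltrami coefficient $\tilde\mu$ on $\IC$ is $\Gamma_{\mu_0}$-invariant and satisfies $\norm{\tilde\mu}_\infty \leq (K-1)/(K+1)$. By the measurable Riemann mapping theorem of Ahlfors--Bers, there exists a quasiconformal homeomorphism $\Phi : \hat\IC \to \hat\IC$ whose Beltrami coefficient is $\tilde\mu$, with dilatation equal to $K(\tilde\phi) \leq e^M$; invariance of $\tilde\mu$ forces $\Phi \Gamma_{\mu_0} \Phi^{-1}$ to be a group of M\"obius transformations conjugate to $\Gamma_{\nu_0}$ via a (necessarily conformal) rearrangement, and after post-composing by that rigid conformal adjustment we obtain the claimed conjugation without changing the dilatation.

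The main obstacle is the precise bookkeeping between $\mathcal{T}_{0,4}$, its Dehn twist quotient $\mathcal{R}$, and the realisation in $\PSL(2,\IC)$: one must confirm that a minimising quasiconformal map between the quotient surfaces in the correct homotopy class exists with dilatation arbitrarily close to $e^M$ (an infimum issue that costs at most $\epsilon$ and can be absorbed, or alternatively eliminated by appeal to Teichm\"uller's existence theorem which provides an extremal map attaining the bound), and that the lift chosen is compatible with the specific holonomy representations picked out by $\mu_0$ and $\nu_0$ rather than a different point in the same Dehn twist orbit. Once these identifications are handled, no other estimate enters: the extension-by-zero trick and the measurable Riemann mapping theorem are exactly designed to transport a quasiconformal deformation of a Kleinian surface to a global quasiconformal conjugation of the group without any loss in the distortion constant.
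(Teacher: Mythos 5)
Your argument is correct in outline, but it takes a genuinely different route from the one in the paper. The paper's proof does not pass through Teichm\"uller maps or the measurable Riemann mapping theorem at all: it pulls the family $\{\Gamma_{\mu}\}$ back through the universal covering $\ID\to\mathcal{R}$, notes that the fixed points of group elements (dense in the limit set) move holomorphically in the parameter, and invokes Slodkowski's equivariant extension of holomorphic motions to obtain, in one stroke, a $\Gamma$-equivariant quasiconformal ambient isotopy of $\oC$ whose dilatation at a parameter $z$ is bounded by $e^{d_{\ID}(0,z)}=e^{\rho_{\mathcal{R}}(\mu_0,\nu_0)}$. Your route instead works downstairs on the quotient surfaces: a (near-)extremal quasiconformal map in the correct homotopy class, lifted to the ordinary sets, with the Beltrami coefficient extended by zero across the limit set and fed into Ahlfors--Bers. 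What the holomorphic-motions argument buys is that equivariance and the identification of the target group come for free --- the motion carries the fixed point of each word $w(\mu_0)$ to the fixed point of the corresponding word $w(\nu_0)$, so the conjugation lands on $\Gamma_{\nu_0}$ on the nose. What your argument buys is an extremal map and independence from Slodkowski's theorem. One caveat on the constant: your assertion that Teichm\"uller distance at most $M$ gives $K\le e^M$ ``by definition'' is only right after fixing conventions; with the standard normalisation $d_T=\frac{1}{2}\log K$ the Teichm\"uller metric on $\mathcal{T}_{0,4}$ is the curvature $-4$ hyperbolic metric, i.e.\ \emph{half} the curvature $-1$ metric $\rho_{\mathcal{R}}$, and the two factors of two cancel to give exactly $K\le e^M$. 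You should make that bookkeeping explicit rather than implicit.

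Two steps you wave at need justification before the proof is complete. First, extension by zero requires $\Lambda(\Gamma_{\mu_0})$ to have measure zero; this is true because the groups in $\mathcal{R}$ are geometrically finite with nonempty ordinary set, but it is an input, not a formality. Second, and more substantively, the claim that invariance of $\tilde\mu$ ``forces'' $\Phi\Gamma_{\mu_0}\Phi^{-1}$ to be conjugate to $\Gamma_{\nu_0}$ is not automatic. Invariance gives only that $\Phi\Gamma_{\mu_0}\Phi^{-1}$ is a group of M\"obius transformations and that $\tilde\phi\circ\Phi^{-1}$ is a conformal conjugacy between its action on $\Phi(\Omega(\Gamma_{\mu_0}))$ and the action of $\Gamma_{\nu_0}$ on its ordinary set; promoting a conformal conjugacy of ordinary-set actions to a conjugacy in $\PSL(2,\IC)$ requires either Marden's isomorphism theorem or, more economically here, Lemma~\ref{lemma2}: the generators $\Phi X\Phi^{-1}$ and $\Phi Y_{\mu_0}\Phi^{-1}$ are parabolic, so the image group is some $\Gamma_\lambda$, and one then checks $\lambda=\nu_0$ by matching multipliers (hence traces) of corresponding elements through the marked isomorphism. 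With those two points supplied, your argument stands as a valid alternative proof of the theorem.
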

\begin{proof}
  Let $\mu_0\in\alpha$ and $\nu_0\in \mathcal{P}_{p/q}$ with $\rho_\mathcal{R}(\mu_0,\nu_0)=R$.  Let $\Phi:\ID\to \mathcal{R}$ be the universal cover map (inducing the hyperbolic metric) with $\Phi(0)=\mu_0$ and $\Phi(\tanh(e^{R/2}))=\nu_0$.  The holomorphically parameterised family of discrete groups $\{ \Gamma_{\Phi(z)}:z\in \ID \}$ induces an equivariant ambient isotopy of $\oC$ by the Sullivan-Thurston theory of holomorphic motions,  in particular Slodkowski's equivariant version \cite{Slod1,Slod2,EKK}.  Roughly if we move $\mu$ in $\mathcal{R}$,  then as solutions to polynomial equations, the fixed point sets of elements of the group $\Gamma_\mu$ move holomorphically and do not collide while the group remains discrete and until the formation of ``new'' parabolic elements.  These fixed point sets are dense in the limit set and their motion extends to a holomorphically parameterised quasiconformal ambient isotopy,  equivariant with respect to the groups $\Gamma_\mu$,  of the whole complex plane. The distortion of this ambient isotopy is no more than the exponential of the hyperbolic distance between the start (at $0$) and finish (at $\tanh(e^{R/2})$),  that is $e^{R}$.
\end{proof}

In their paper, Keen and Series show that as we move down a pleating ray (and shrinking the hyperbolic translation length of a Farey word $W_{p/q}$ and the length of a
simple closed curve on $S^2_4$) there is a natural combinatorial pattern of round disks which they call \textit{F-peripheral disks}, stabilised by the Fuchsian group $\langle f,h_{p/q}\rangle$
and closely related to the peripheral subgroups of the fundamental group of the $3$-manifold $\IH^3/\Gamma$.  There is a non-conjugate pair of these peripheral disks. They both contain
a conjugate of the Farey word in their stabiliser and these peripheral disks persist in small deformations \cite[Proposition 3.1]{KS}, and force the quotient $(\oC\setminus \Lambda_\mu)/\Gamma_\mu$
to be the four-times punctured sphere \cite[Lemma 3.5]{KS}.  This process continues until the Farey word becomes parabolic.  This pinching then forces circles in the limit set to become tangent,
and the quotient of the ordinary set to degenerate to a disjoint union of two three-times punctured spheres.

Consider deforming a point $\mu \in \mathcal{R}$ towards the Riley slice boundary along a curve $\alpha$ which lies a bounded distance away from a pleating ray $ \mathcal{P}_{p/q} $. Theorem~\ref{thm3} shows
that if $\nu \in \mathcal{P}_{p/q} $ is the hyperbolically closest point to $\mu $ on the pleating ray then the combinatorial properties of circles in the limit set of $ \Gamma_{\nu} $ transfer directly
to combinatorial properties of quasicircles in the limit set of $ \Gamma_{\mu} $ since there is a uniformly bounded distortion mapping one to the other. These quasicircles bound what we will call
the \textit{peripheral quasidisks} of the group $ \Gamma_{\nu} $ (by analogy with the theory of Keen and Series).

Most of the information that the Keen--Series theory provides is topological and their arguments could be used almost directly if we knew these uniform bounds. However,  there is no way that we can
compute or even estimate the hyperbolic metric of the Riley slice near the boundary to identify a curve such as $\alpha$ for every rational pleating ray.  What we do is guess (motivated by examining
a lot of examples) that such a curve is $\alpha=P_{p/q}^{-1}(\{z=-2+i t:t>0\}$, where we take the branch of the inverse of $ P_{p/q} $ with the correct asymptotic behaviour.

Indeed it is easily seen that this curve works for the identity word $P_{1/1}(\mu)=2+\mu$.  Early pictures of $\mathcal{R}$ by Riley suggest there is a cusp of $\mathcal{R}$ at $-4$.  In the next section we recall the main
result of a paper of Lyndon and Ullman \cite{LU} and examine it in this context.

\subsection{Lyndon and Ullman's results}

\begin{theorem}[Theorem 3, \cite{LU}]\label{LU}
  Let $K$ denote the Euclidean convex hull of the set $\ID(0,2)\cup\{\pm 4\}$.  Then $\IC\setminus \mathcal{R} \subset K$.
\end{theorem}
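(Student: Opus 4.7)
The plan is to apply the Klein--Maskit combination theorem to the two cyclic parabolic subgroups $\langle f \rangle$ and $\langle g\rangle$: for every $\mu \notin K$ I would exhibit precisely-invariant horoball neighbourhoods $U_\infty \ni \infty$ and $U_0 \ni 0$ with disjoint closures. If such horoballs can be found, the combination theorem delivers $\Gamma_\mu = \langle f\rangle \ast \langle g\rangle$ as a discrete free product on two parabolic generators whose ordinary-set quotient is forced to be topologically $S_4^2$; this places $\mu \in \mathcal{R}$. The natural candidates are $U_\infty = \{\Im z > h\}$ (automatically $\langle f\rangle$-invariant) and, for a parameter $c > 0$, the $\langle g\rangle$-invariant horodisk
\[
  U_0 = \left\{ z \in \IC : \left| z - \frac{1}{2c\mu} \right| < \frac{1}{2c|\mu|} \right\};
\]
this one-parameter family of horodisks is obtained by conjugating the $\langle f\rangle$-invariant horoballs at $\infty$ via $z\mapsto 1/z$, and recovers the isometric circles of $g^{\pm 1}$ exactly when $c = \pm 1/2$.

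Writing $\mu = re^{i\theta}$, the combination-theorem hypotheses translate into a system of inequalities in $(c,h,r,\theta)$. The two cleanest are: (i) $\overline{U_\infty} \cap \overline{U_0} = \emptyset$, giving $h \geq (1 - \sin\theta)/(2cr)$; and (ii) the $\langle f\rangle$-translates of $U_0$ are pairwise disjoint, giving $cr \geq 1$. There is in addition a subtler condition that the $\langle g\rangle$-translates of $U_\infty$ be disjoint from $U_0$, which by a direct M\"obius computation becomes a further constraint relating $c, h$ and the position of $1/\mu$ relative to $\overline{U_0}$ (in particular forcing $c>1$ when the point $1/\mu$ must be kept outside $\overline{U_0}$). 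The symmetries $\mu \leftrightarrow -\mu$ (swapping $g$ with $g^{-1}$) and $\mu \leftrightarrow \bar\mu$ of $\mathcal{R}$ reduce the analysis to $\arg\mu \in [0, \pi/2]$.

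The crux is the geometric optimization: identify the set of $\mu$ for which the inequalities admit a simultaneous solution in $(c, h)$. I expect two regimes in this sector. For $\arg\mu \in [\pi/3, \pi/2]$ the binding constraint is the height inequality (i), and optimizing yields $r > 2$, matching the circular arc of $\partial K$ on $|z| = 2$. For $\arg\mu \in [0, \pi/3]$ the binding constraint is instead the interaction of (ii) with the M\"obius condition, and its solution should carve out exactly the straight tangent segment of $\partial K$ from $2e^{i\pi/3}$ to $4$ (hence the cusp at $4$).

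The main obstacle is sharpness. A naive Ford-domain argument, fixing $c = 1/2$ and fitting the isometric disks of $g^{\pm 1}$ into the fundamental strip of $\langle f\rangle$, gives only the weaker bound $|\Re(1/\mu)| + 1/|\mu| < 1/2$, which cuts out a strictly smaller region than $\IC \setminus K$ (in particular it fails already at $\mu = 2e^{i\pi/3}$, which lies on $\partial K$). Recovering the full $K$ requires genuinely exploiting the two-parameter freedom in $(c, h)$, i.e., letting $c$ vary away from its isometric-circle value so as to tilt and shrink the horodisk in a way adapted to $\arg\mu$. Verifying that the two regime-boundaries meet cleanly at the tangent point $2e^{i\pi/3}$ will be the most delicate part of the case analysis.
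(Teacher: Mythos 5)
Your central mechanism does not go through: disjoint precisely invariant horoballs at the two parabolic fixed points are not the hypothesis of any combination theorem that yields $\Gamma_\mu=\langle f\rangle * \langle g\rangle$. The free–product (Klein/ping-pong) combination requires an \emph{interactive pair} $(U_\infty,U_0)$, i.e.\ $f^n(U_0)\subset U_\infty$ for all $n\neq 0$ and $g^n(U_\infty)\subset U_0$ for all $n\neq 0$; but $f^n(U_0)$ is a horizontal translate of a small disk near the origin and never enters $\{\Im z>h\}$, so horoballs can never form such a pair. Your conditions (i)--(iii) only test disjointness of translates under words of length one, whereas precise invariance of $U_0$ under $\langle g\rangle$ \emph{in the whole group} requires $w(U_0)\cap U_0=\emptyset$ for every $w\in\Gamma_\mu\setminus\langle g\rangle$ --- essentially the statement being proved, and not reducible to finitely many inequalities without an inductive ping-pong structure. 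Concretely, at $\mu=1$ one has $(fg^{-1})^3=1$, so $\Gamma_1$ is not free, yet a very high half-plane $U_\infty$ and a very small horodisk $U_0$ at $0$ satisfy all of your listed constraints; so those constraints cannot imply freeness. There is also a computational slip: the disk $\bigl|z-\tfrac{1}{2c\mu}\bigr|<\tfrac{1}{2c|\mu|}$ corresponds under $w=1/z$ to the half-plane $\Re(w\bar\mu)>c|\mu|^2$, whose boundary line is perpendicular to the translation direction of $w\mapsto w+\mu$; it is mapped strictly inside itself by $g$ and is not $\langle g\rangle$-invariant.

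Note that this theorem is not proved in the present paper at all --- it is quoted from Lyndon and Ullman \cite{LU} --- so the comparison must be with their argument, which is genuinely different: they establish \emph{freeness} by a word-length induction / ping-pong using sets adapted to the argument of $\mu$ (half-planes and wedge-shaped regions rather than round isometric disks), which is how they reach the full convex hull $K$ rather than the weaker $|\mu|\geq 4$ bound that the round-disk Klein combination gives. If you want a combination-theorem proof, the ping-pong sets must be fundamental-domain complements (a width-one vertical strip for $\langle f\rangle$, the union of the two isometric disks of $g^{\pm1}$ for $\langle g\rangle$); no choice of the parameters $(c,h)$ in a horoball family repairs the argument, because the framework itself is wrong. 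Finally, be aware that freeness alone does not place $\mu$ in $\mathcal{R}$: one must also obtain discreteness and identify $\Omega(\Gamma_\mu)/\Gamma_\mu$ as $S^2_4$, which a correctly executed Klein combination supplies via its fundamental domain but which your horoball conditions never address.
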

See Figure~\ref{fig:LURegion} for a depiction of this bound.

\begin{figure}
  \centering
  \includegraphics[scale=0.2]{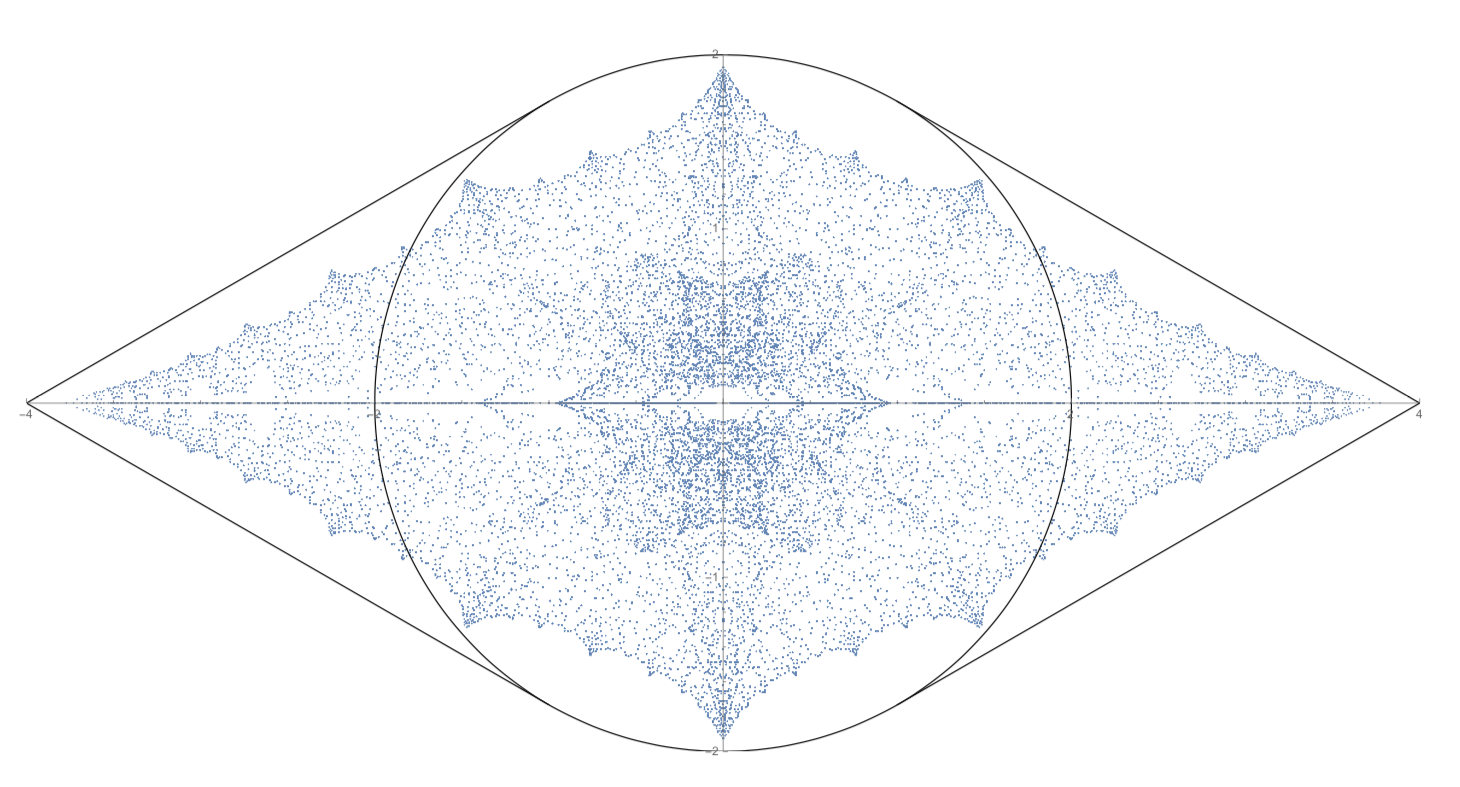}
  \caption{The convex hull of $\ID(0,2)\cup\{\pm 4\} $ contains $\IC\setminus \mathcal{R}$.\label{fig:LURegion}}
\end{figure}

Let ${\cal W}=\{z:\frac{-\pi}{6}< \arg(z+4)<\frac{\pi}{6}\}$ denote the sector of solid angle $\pi/3$ with tip at $-4$.  Let $\varphi(z)$ be the branch of $z\mapsto -(-z-4)^{3/5}-4$
conformally mapping $\IC\setminus {\cal W}$ to the half-space $H=\{z:\Re(z)<-4\}$.  Then  $H\subset {\cal W}$ and $\varphi(H)$ is the sector $\{z:\frac{5\pi}{6}< \arg(z+4)<\frac{7\pi}{6}\}$.
Because $\varphi$ is conformal it is now straightforward to see that the distance in the hyperbolic metric of ${\cal W}$ between the line $\ell_1=-4+i\IR$ and the rational pleating ray $\ell_2=(-\infty,-4]$ is
\begin{displaymath}
  \rho_{{\cal W}}(\ell_1,\ell_2) = \int_{0}^{3\pi/10} \frac{d\theta}{\cos(\theta)} = \frac{1}{2}\ln\left[5+2 \sqrt{5}\right] \approx 1.1241.
\end{displaymath}
From Theorem~\ref{thm3} we now have the following corollary.

\begin{corollary}
  Let $\nu\in -4+i\IR$.  Then there is $\mu\in (-\infty,-4]$,  the rational pleating ray $1/2$,  so that $\Gamma_\nu$ and $\Gamma_\mu$ are $K$-quasiconformally conjugate and
  \begin{displaymath}
    K \leq \sqrt{5+2 \sqrt{5}} \approx 3.077\ldots
  \end{displaymath}
\end{corollary}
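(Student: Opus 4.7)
The plan is to bound $\rho_\mathcal{R}(\nu,\mu)$ from above by Schwarz--Pick monotonicity of the hyperbolic metric, using the explicit computation in the display immediately preceding the corollary, and then to apply Theorem~\ref{thm3}.

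Set $V := \IC \setminus \overline{\mathcal{W}}$. First I would verify $V \subset \mathcal{R}$. The closed sector $\overline{\mathcal{W}}$ has opening angle $\pi/3 < \pi$, so it is Euclidean-convex. The upper and lower tangent lines from $-4$ to $\overline{\ID(0,2)}$ touch the disk at $(-1, \pm\sqrt{3})$ and have slopes $\pm 1/\sqrt{3} = \tan(\pm\pi/6)$, so they coincide with the boundary rays of $\mathcal{W}$; hence $\overline{\ID(0,2)} \cup \{\pm 4\} \subset \overline{\mathcal{W}}$. By convexity, $K \subset \overline{\mathcal{W}}$, and Theorem~\ref{LU} now gives
\begin{displaymath}
  V = \IC \setminus \overline{\mathcal{W}} \subset \IC \setminus K \subset \mathcal{R}.
\end{displaymath}
The Schwarz--Pick lemma (monotonicity of the hyperbolic metric under inclusion of hyperbolic domains) then yields $\rho_\mathcal{R}(z,w) \leq \rho_V(z,w)$ whenever $z, w \in V$.

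Given $\nu \in \ell_1 \setminus \{-4\}$, I would next produce $\mu \in \ell_2$ with $\rho_V(\nu,\mu) = M := \tfrac12\ln(5+2\sqrt{5})$. The Euclidean dilations $z \mapsto -4 + \lambda(z+4)$ ($\lambda > 0$) are conformal automorphisms of $V$, so they are isometries of $\rho_V$; each preserves both $\ell_1$ and $\ell_2$ setwise and acts transitively on each. Consequently the function $\nu \mapsto \rho_V(\nu, \ell_2)$ is constant on $\ell_1 \setminus \{-4\}$ and, by the computation preceding the corollary, equals $M$. Pushing the picture through $\varphi$, the image $\varphi(\ell_2) = (-\infty,-4]$ is a hyperbolic geodesic of $H$ (being orthogonal to $\partial H$), so the infimum is realised for each $\nu$ by the foot of the perpendicular from $\varphi(\nu)$ onto $\varphi(\ell_2)$, providing an explicit $\mu \in \ell_2$ with $\rho_V(\nu,\mu) = M$. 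Combined with Schwarz--Pick, $\rho_\mathcal{R}(\nu,\mu) \leq M$.

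Since $\mu$ lies on the rational pleating ray $\ell_2$, Theorem~\ref{thm3} applied with curve $\alpha = \{\nu\}$ and distance bound $M$ now provides a quasiconformal conjugation between $\Gamma_\nu$ and $\Gamma_\mu$ whose distortion is at most $e^M = \sqrt{5+2\sqrt{5}}$, as required. The principal technical step is the sharp containment $K \subset \overline{\mathcal{W}}$: the tangent slope from $-4$ to $\overline{\ID(0,2)}$ and the half-opening of $\mathcal{W}$ both equal $\pi/6$ exactly, so the inclusion is tight and the identity $\arctan(1/\sqrt{3}) = \pi/6$ must be checked rather than merely estimated.
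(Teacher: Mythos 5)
Your proposal is correct and follows essentially the same route as the paper: Schwarz--Pick monotonicity of the hyperbolic metric applied to the inclusion of $\IC\setminus\overline{\mathcal{W}}$ into $\mathcal{R}$, the explicit distance computation $\tfrac12\ln(5+2\sqrt5)$ from the display preceding the corollary, and then Theorem~\ref{thm3}. The only difference is that you explicitly verify the tangency $K\subset\overline{\mathcal{W}}$ (via the tangent lines from $-4$ to $\overline{\ID(0,2)}$) and the realisation of the infimum on $\ell_2$, steps the paper leaves implicit.
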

\begin{proof}
  The only thing left to observe is that the contraction principle for the hyperbolic metric (really Schwarz' lemma in disguise) shows that the hyperbolic metric of $\mathcal{R}$ is
  smaller than the hyperbolic metric of $\IC \setminus {\cal W}$ so that
  \begin{displaymath}
    \rho_\mathcal{R}(\mu,\nu)\leq \rho_{\cal W}(\mu,\nu) = \frac{1}{2}\ln\left[5+2 \sqrt{5}\right]
  \end{displaymath}
  for the point $\mu$ closest to $\nu$, and hence by Theorem~\ref{thm3}
  \begin{displaymath}
    K\leq e^{\rho_\mathcal{R}(\mu,\nu)} \leq \sqrt{5+2 \sqrt{5}}.
  \end{displaymath}
  This proves the corollary.
\end{proof}

\begin{figure}
  \centering
  \includegraphics[scale=0.3]{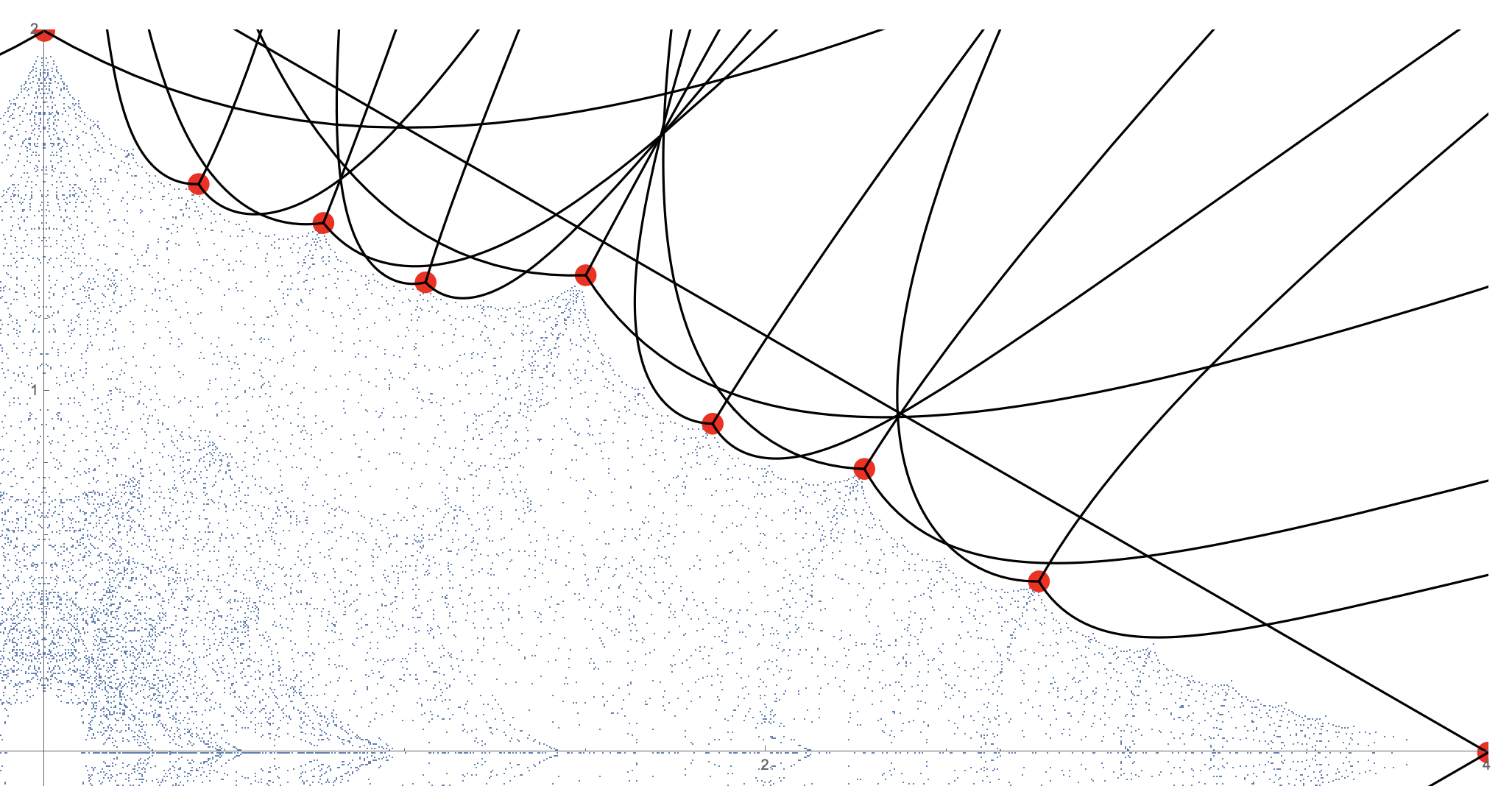}
  \caption{Preimages of the sector $\{\arg(z) = -\frac{5\pi}{6}\}\cup\{\arg(z)=\frac{\pi}{6}\}$.\label{fig:LURays}}
\end{figure}

We believe these estimates persist in what we will prove but getting them adds additional complications in the construction we give when the isometric circles of $W_{p/q}$ are no longer
disjoint.   We offer Figure~\ref{fig:LURays}, which is a slight modification of Figure~\ref{fig:RileyPleatedNbhds} (p.~\pageref{fig:RileyPleatedNbhds}), as computational support for this conjecture.
Instead of looking at the branch of the inverse of $P_{p/q}$ defined on $\{ \Re(z)< -4 \} $, to produce this image we compute the preimages of the conic region of opening $\frac{\pi}{3}$ given by Lyndon and Ullman.

\section{Proof of Theorem~\ref{main}}\label{sec:PF}

Our proof is structured as follows, closely mimicking that of Keen and Series. We start on a rational pleating ray at a value $\mu_0\in \mathcal{R}$ and move $\mu$ away from it.
Since $\mathcal{R}$ --- in fact $\overline{\mathcal{R}}$ (by Lemma~\ref{lem:bdry_discrete}) --- consists of discrete groups,  discreteness will never be an issue for us.  For a
small variation the  groups $\Gamma_\mu=\langle f,g\rangle$ are discrete Schottky groups with quotient the four-times punctured sphere.  The key issue is the open/closed
argument in the proof of \cite[Theorem 3.7]{KS}.  Openness will be directly as they argue, but without control on the distortion of the induced combinatorial pattern the peripheral
quasicircles can become quite entangled and eventually become space filling curves. This is the situation we must avoid and we do it by modifying the peripheral quasidisks as we move,
so they have large scale ``bounded geometry'' (though the small scale geometry is uncontrolled).  An important observation is that along the rational pleating ray the isometric circles
of the Farey word $h_{p/q}$ are disjoint.  We move away keeping this condition.  Further,  if we do not move too far away these isometric circles do not start spinning around one
another.  This information allows us to construct a ``nice'' precisely invariant set stabilised  by $f$ and $h_{p/q}$ --- one of the peripheral quasidisks with bounded geometry.
Existence of this peripheral quasidisk guarantees we have quotient $S^2_4$ from the action of $\Gamma_\mu$ on the ordinary set. These nice configurations persist with an open and
(relatively) closed argument within a certain region and so we remain in the Riley slice through this deformation.

It may be useful to have a reference to a specific example.  In Figure~\ref{fig:CuspGroup} are pictures of the geometric objects we will be interested in for two specific cusp groups.

\begin{figure}
  \centering
  \includegraphics[width=\textwidth]{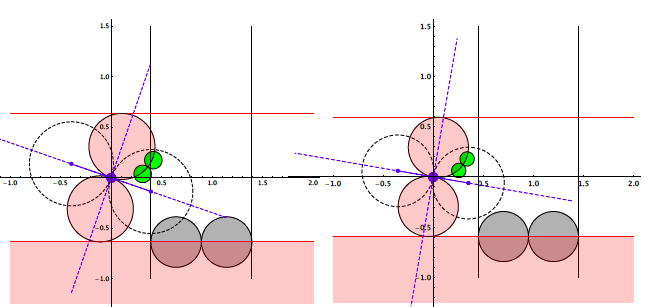}
  \caption{Geometric objects in the limit sets of the $3/4$-cusp group (left) and the $4/5$-cusp group (right).  The isometric circles of $W_{p/q}$ are shaded grey; their images under the involution $\Phi:z\mapsto1/(\mu z)$ are shaded green.  This involution defines the non-conjugate peripheral disk $\langle Y, \Phi W_{p/q} \Phi^{-1}\rangle$.  The non-conjugate peripheral disks are shaded in red (one is a lower half plane).  Fixed points of the involution and its action are also illustrated.\label{fig:CuspGroup}}
\end{figure}

\subsection{Products of parabolics}
As noted earlier (Lemma~\ref{lemma1}), an important property of a Farey word $W_{p/q}$  is that it can be written as a product of parabolic elements in two essentially different ways.
For $\mu\in \mathcal{R}$ there are only two conjugacy classes of parabolics, those represented by $X$ and $Y$ \cite[VI.A]{Mas}.  This is explained in \cite[\S2]{KS}; it is just a reflection
of the fact that a simple closed curve on the four-times punctured sphere must separate one pair of punctures from another,  so the deletion of this curve leaves two doubly punctured
disks. To find these parabolics we just look for a couple of conjugates  of $X$ and $Y$ whose product is $W_{p/q}$. Keen and Series studied the set of all such pairs (this is the data encoded in
the subgroups $ \mathcal{U}_{p/q} $ of their paper).  However, we will really only look closely at the pair $\{X, X^{-1}W_ {p/q}\}$. The group $\langle X,W_ {p/q} \rangle = \langle X,X^{-1}W_ {p/q} \rangle$
is generated by two parabolics. This group can therefore only be discrete and free on its generators if $\tr(X .X^{-1}W_ {p/q})-2=\tr(W_ {p/q})-2 \in \overline{\mathcal{R}}$.  If $\tr(W_ {p/q})\in \IR$,
then the traces of $X$, $X^{-1}W_ {p/q}$ and $W_ {p/q}$ are real (the first two are $\pm 2$) and so $\langle X,W_ {p/q} \rangle$ is Fuchsian.  These groups and their conjugates are where the round $F$-peripheral
circles in \cite{KS} come from.

\begin{lemma}
  Suppose that $\Gamma_u=\langle u_1,u_2 \rangle$ and $\Gamma_v=\langle v_1,v_2 \rangle$ are two groups generated by parabolics $ u_1,u_2,v_1,v_2 $,  and that
  $\tr(u_1u_2)=\tr(v_1v_2)$.  Then $\Gamma_u$ and $\Gamma_v$ are conjugate in $\PSL(2,\IC)$.
\end{lemma}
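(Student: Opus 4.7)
The strategy is to generalise the argument of Lemma~\ref{lemma2}: exhibit a canonical normal form for any subgroup of $\PSL(2,\IC)$ generated by two parabolics with distinct fixed points, parameterised by the trace of their product, and observe that the hypothesis matches the parameters.

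First I would reduce to the case where $u_1, u_2$ have distinct fixed points on $\oC$ (and similarly for $v_1,v_2$). If they shared a fixed point they would commute, hence both lie in the translation group after conjugating the shared fixed point to $\infty$; in that elementary case $u_1 u_2$ is itself parabolic, so $\tr(u_1u_2)=\pm 2$. This degenerate situation does not arise in the applications of this lemma in the paper (where $u_1u_2$ will be a non-parabolic element derived from a Farey word), so we assume $u_1,u_2$ have distinct fixed points $z_1\ne z_2$ on $\oC$.

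By triple transitivity of $\PSL(2,\IC)$ on $\oC$ there is a M\"obius transformation $h$ with $h(z_1)=\infty$, $h(z_2)=0$, and $(hu_1h^{-1})(0)=1$: once $h$ sends $z_1,z_2$ to $\infty,0$, the conjugate $hu_1h^{-1}$ is a parabolic fixing $\infty$, hence a translation $z\mapsto z+c$, and a further conjugation by $z\mapsto z/c$ (which preserves the previous two conditions) normalises $c=1$. Then $hu_1h^{-1}(z)=z+1$ and $hu_2h^{-1}(z)=z/(\alpha z+1)$ for a unique $\alpha\in\IC^{\times}$. A short matrix computation, exactly as in Lemma~\ref{lemma2}, gives
\begin{displaymath}
  \tr\bigl(hu_1h^{-1}\cdot hu_2h^{-1}\bigr) = 2+\alpha,
\end{displaymath}
so $\alpha=\tr(u_1u_2)-2$. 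Performing the analogous normalisation for $\Gamma_v$ yields a conjugation $h'$ under which $\Gamma_v$ acquires the form $\langle z\mapsto z+1,\, z\mapsto z/(\beta z+1)\rangle$ with $\beta=\tr(v_1v_2)-2$. Under the hypothesis $\tr(u_1u_2)=\tr(v_1v_2)$ we have $\alpha=\beta$, so the two normalised groups are identical; the composition $h'^{-1}h$ then conjugates $\Gamma_u$ to $\Gamma_v$ in $\PSL(2,\IC)$.

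The main point to verify carefully is that the three normalising conditions pin down $h$ uniquely, which is just the standard triple transitivity of $\PSL(2,\IC)$ on $\oC$ together with the observation that specifying the translation amount of a parabolic fixing $\infty$ is equivalent to specifying the image of one additional generic point. No other obstacle is substantial; everything else is a routine matrix calculation which I would not grind through in the final write-up.
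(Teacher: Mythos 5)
Your proof is correct, but it takes a genuinely different route from the paper's. You normalise both groups to the standard Riley form $\langle z\mapsto z+1,\ z\mapsto z/(\nu z+1)\rangle$ with $\nu=\tr(u_1u_2)-2$, i.e.\ you run the argument of Lemma~\ref{lemma2} twice and compose the two normalising conjugations. The paper instead introduces involutions $\phi_u,\phi_v$ with $\phi_u u_1\phi_u^{-1}=u_2$ and $\phi_v v_1\phi_v^{-1}=v_2$, computes the trace parameters $\beta(u_1)=\beta(v_1)=0$, $\beta(\phi_u)=\beta(\phi_v)=-4$ and $\tr[u_1,\phi_u]=\tr(u_1u_2)=\tr(v_1v_2)=\tr[v_1,\phi_v]$, and invokes the classification of \cite{GM} by which a two-generator subgroup of $\PSL(2,\IC)$ is determined up to conjugacy by these three parameters; conjugacy of $\langle u_1,\phi_u\rangle$ with $\langle v_1,\phi_v\rangle$ then restricts to the subgroups $\Gamma_u$ and $\Gamma_v$. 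Your argument is more elementary and self-contained, relying only on a normal form already established in the paper; the paper's argument buys the existence of the involution $\phi_u$, which is the $\IZ_2$ symmetry exchanging the two parabolic classes and is reused later (in Theorem~\ref{thm:discrete_gps_on_pleating_rays} and in the discussion of the non-conjugate peripheral disks). You are also right to isolate the degenerate case $\tr(u_1u_2)=\pm 2$ where the generators share a fixed point: the lemma is false there as literally stated (for instance $\langle z+1,z+i\rangle$ and $\langle z+1,z+2i\rangle$ have parabolic products of equal trace but are not conjugate), and the paper's proof also silently excludes it, since no conjugating involution exists and the \cite{GM} classification requires the commutator parameter to be nonzero. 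The one point both you and the paper leave implicit is the sign convention for lifting parabolics to $\SL(2,\IC)$; the trace of the product is only well defined once both generators are lifted with trace $+2$, which your normal form enforces automatically.
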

\begin{proof}
  A little more is true.  There is an involution $\phi_u$ so that $\phi_u u_1 \phi_u^{-1}=u_2$.  We can similarly define an involution $\phi_v$.  Then, with $\beta(w)=\tr^2(w)-4$,
  \begin{displaymath}
    \beta(\phi_u)=\beta(\phi_v)=-4, \;\; \beta(u_1)=\beta(v_1)=0.
  \end{displaymath}
  Also
  \begin{displaymath}
    \tr([u_1,\phi_u])=\tr( u_1\phi_u u_1^{-1}\phi_u^{-1})=\tr(u_1u_2)=\tr(v_1v_2)=\tr([v_1,\phi_v])
  \end{displaymath}
  In \cite{GM} it is shown that any pair of two-generator groups with the same trace square of the generators,  and same trace of the commutators are conjugate in $\PSL(2,\IC)$.
  Thus $\langle v_1,\phi_v \rangle$ and $\langle u_1,\phi_u \rangle$ are conjugate which implies the result we want.
\end{proof}

The upshot of this lemma is that if we were to pick a different pair whose product was $ W_{p/q} $ then we get exactly the same geometry, up to a well-defined conjugation in $ \hat\IC $.

\subsection{Holonomy and isometric disks}
Let $W\in \PSL(2,\IC)$ with $\tr(W)=-2+i t$.  Then the M\"obius transformation $f$ representing $W$ has translation length $\tau_f$ and holonomy $\theta_f$ where
\begin{displaymath}
  \frac{\tau_f}{2}= \Re\left[\sinh^{-1}\left(\frac{i}{2} \sqrt{t (4 i+t)}\right)\right], \; \frac{\theta_f}{2}= \Im\left[\sinh^{-1}\left(\frac{i}{2} \sqrt{t (4 i+t)}\right)\right]
\end{displaymath}
We also have the following asymptotics.
\begin{gather*}
  \frac{ \tau_f}{\sqrt{2t}} \to 1, \mbox{ as $t\to 0$.\quad For $0<t<1$, we have } 1 \leq \frac{ \tau_f}{\sqrt{2t}} \leq 1.03642\ldots\\
  \frac{\theta_f}{\sqrt{2t}} \to -1, \mbox{ as $t\to 0$.\quad For $0<t<1$, we have } -1  \leq \frac{ \theta_f}{\sqrt{2t}} \leq -0.954\ldots
\end{gather*}
and $ \theta_f \to -\pi$, as $t\to \infty$.

The isometric disks of $f(z)$ are the two disks.
\begin{displaymath}
  D_1=\left\{z: \big|z-\frac{a}{c}\big|\leq \frac{1}{|c|} \right\}, \quad D_2= \left\{z:\big|z+\frac{d}{c}\big|\leq \frac{1}{|c|}\right\}, \quad f \sim w=\left(\begin{array}{cc} a & b\\c & d \end{array} \right)
\end{displaymath}
The isometric circles are the boundaries of these two disks.  We say   that $f$ has \textit{disjoint isometric disks} if these disks are disjoint.  This is clearly equivalent to the condition $ |a+d|\geq 2 $.

The mapping $f$ pairs these disks in the sense that
\begin{displaymath}
  f(D_1)=\oC \setminus \overline{D_2}.
\end{displaymath}
Thus $\oC\setminus \overline{D_1\cup D_2}$ is a fundamental domain for the action of $f$ on $\oC$.
Notice that when $c\neq 0$,  $f(\infty)=\frac{a}{c}$ and that $f^{-1}(\infty)=-\frac{d}{c}$ are the centers of the isometric disks.

We now specialise to the case that the transformation is a Farey word. Recall from above the notation
\begin{displaymath}
  W_{p/q}(\mu) = \begin{pmatrix} a_{p/q}(\mu) & b_{p/q}(\mu) \\ c_{p/q}(\mu) & d_{p/q}(\mu) \end{pmatrix} \qquad  a_{p/q}d_{p/q}-b_{p/q}c_{p/q}=1.
\end{displaymath}

\begin{lemma}
  Let $\Re(\mu)\leq -2$.  Then the Farey word $W_{p/q}(\mu)$ has disjoint isometric disks.
\end{lemma}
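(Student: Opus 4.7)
The plan is to reduce the lemma to a polynomial inequality about $P_{p/q}$ and then apply the minimum modulus principle. By Theorem~\ref{thmiso}, writing $W_{p/q}(\mu) = \begin{pmatrix} a & b \\ c & d \end{pmatrix}$ we have $c = Q_{p/q}(\mu) = P_{p/q}(\mu) - 2$ and $a+d = P_{p/q}(\mu)$. The two isometric circles of $h_{p/q}(\mu)$ are centred at $a/c$ and $-d/c$ with common radius $1/|c|$, so the distance between their centres is $|a+d|/|c| = |P_{p/q}(\mu)|/|c_{p/q}(\mu)|$, and the two open disks are disjoint if and only if
\[
  |P_{p/q}(\mu)| \geq 2.
\]
So the lemma reduces to proving this polynomial inequality on the closed half-plane $H_{-2} := \{\mu : \Re\mu \leq -2\}$.

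To prove it I would apply the minimum modulus principle. Since $P_{p/q}$ is a non-constant polynomial of degree $q \geq 1$, $|P_{p/q}(\mu)| \to \infty$ as $|\mu| \to \infty$ within $H_{-2}$; provided $P_{p/q}$ has no zero in $H_{-2}$, the reciprocal $1/P_{p/q}$ is holomorphic on $H_{-2}$ and its supremum is attained on the boundary line $\{\Re\mu = -2\}$. The work therefore splits as:
\begin{enumerate}
  \item[(a)] showing $P_{p/q}$ has no zero in $H_{-2}$;
  \item[(b)] showing $|P_{p/q}(-2+it)| \geq 2$ for every $t \in \IR$.
\end{enumerate}

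For (a), a zero of $P_{p/q}$ at $\mu_0$ makes $h_{p/q}(\mu_0)$ order-$2$ elliptic in $\PSL(2,\IC)$, so $\Gamma_{\mu_0}$ has torsion, cannot be free on its generators, and therefore lies outside $\mathcal{R}$; by Theorem~\ref{LU} such $\mu_0$ lies in the convex hull $K$ of $\ID(0,2) \cup \{\pm 4\}$, whose intersection with $H_{-2}$ is a narrow triangle with vertices near $-4$ and $-2 \pm (2/\sqrt 3)i$. One excludes zeros of $P_{p/q}$ inside this region by combining the parabolic decomposition of Lemma~\ref{lemma1} with the factorisation structure of Conjecture~\ref{conj}. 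For (b), the anchor is $t = 0$: at $\mu = -2$ the group $\Gamma_{-2} = \langle X, Y_{-2}\rangle$ is a Fuchsian subgroup of $\PSL(2,\IZ)$, and $W_{p/q}(-2)$ is an integer matrix representing a non-peripheral simple closed curve on the quotient orbifold, so it is hyperbolic and $|P_{p/q}(-2)|$ is an integer at least $3$. The symmetry $|P_{p/q}(-2+it)| = |P_{p/q}(-2-it)|$ (since $P_{p/q}$ has real coefficients), combined with a direct estimate, then propagates the bound to all $t \in \IR$; as a sanity check for $p/q = 1/2$ one has $|P_{1/2}(\mu)|^2 - 4 = (s^2+t^2)^2 + 4(s^2 - t^2)$ with $\mu = s+it$, which is manifestly non-negative on $H_{-2}$ since $s^2 \geq 4$.

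The main obstacle is executing the uniform-in-$t$ bound of step (b) and the root-location assertion of step (a) for every slope $p/q$. Both should ultimately follow from an induction on depth in the Farey tree, driven by trace recursions implied by the Fricke identity $\tr(AB) = \tr A\,\tr B - \tr(AB^{-1})$ and anchored on the small-denominator base cases computed in Table~\ref{tab:farey_words}; the even/odd case split supplied by Lemma~\ref{lemma1} guides this induction naturally.
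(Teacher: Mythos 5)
Your opening reduction---disjointness of the isometric disks is equivalent to $\abs{a+d}=\abs{P_{p/q}(\mu)}\geq 2$---is exactly the criterion the paper uses, but from there you and the paper diverge because you read the hypothesis differently. The paper's own proof treats the condition as a constraint on the \emph{trace}: it works along $\tr W_{p/q}(\mu)=-2+it$ (more generally $\Re P_{p/q}(\mu)\leq -2$), whence $\abs{a+d}=\sqrt{4+t^2}\geq 2$ at once, and the lemma is a one-line consequence. That reading is the one consistent with the rest of Section~\ref{sec:PF}: the canonical peripheral quasidisk is defined by $\tr(W_{p/q})\in\{\Re z<-2\}$, and Theorem~\ref{main} concerns $P_{p/q}^{-1}(\mathcal{H}_{-2})$; the literal wording ``$\Re(\mu)\leq-2$'' in the lemma appears to be a slip for a condition on $\tr W_{p/q}(\mu)$. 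You instead took the hypothesis at face value as a half-plane condition on the parameter $\mu$ and set out to prove $\abs{P_{p/q}(\mu)}\geq 2$ on all of $\{\Re\mu\leq-2\}$, which is a genuinely harder, global statement about every Farey polynomial.

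That harder statement is not established by your proposal. Step (a) leans on Conjecture~\ref{conj}, which the paper explicitly leaves unproven, and the Lyndon--Ullman hull $K$ does meet $\{\Re\mu\leq-2\}$ nontrivially (it contains the triangle with vertices $-4$ and $-2\pm\tfrac{2}{\sqrt{3}}i$), so excluding zeros of $P_{p/q}$ from that region is real work that is nowhere carried out; note also that a zero of $P_{p/q}$ is not the only failure mode, since the isometric disks are undefined wherever $c_{p/q}=Q_{p/q}(\mu)=0$, and those roots would also need to be excluded from the half-plane under your reading. Step (b) is verified only for $p/q=1/2$; the asserted hyperbolicity of $W_{p/q}(-2)$ presupposes facts about $\Gamma_{-2}$ that are not justified, and ``a direct estimate'' plus an unexhibited Farey-tree induction is a plan rather than a proof. (Your use of the minimum modulus principle itself is fine, since $1/P_{p/q}\to 0$ at infinity.) If you adopt the trace reading of the hypothesis, your first paragraph already contains the entire proof and matches the paper's; under your literal reading the proposal has genuine gaps at both (a) and (b).
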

\begin{proof}
  The isometric circles of $W_{p/q}$ are the two disks
  \begin{equation}\label{isofarey}
    D_1=\ID\Big(\frac{a_{p/q}(\mu)}{c_{p/q}(\mu)},\frac{1}{|c_{p/q}(\mu)|}\Big),  \quad D_2=\ID\Big(\frac{-d_{p/q}(\mu)}{c_{p/q}(\mu)},\frac{1}{|c_{p/q}(\mu)|}\Big)
  \end{equation}
  We now compute with the identity of Theorem~\ref{thmiso} that
  \begin{equation}\label{isoin}
    \frac{a_{p/q}(\mu)}{c_{p/q}(\mu)}+\frac{d_{p/q}(\mu)}{c_{p/q}(\mu)} = \frac{2+c_{p/q}(\mu)}{c_{p/q}(\mu)} = 1+\frac{2}{c_{p/q}(\mu)}
  \end{equation}

  Now $\tr(W_{p/q})=-2+it$ implies $|a_{p/q}+d_{p/q}|=\sqrt{4+t^2}>2$ so along the the path  $\tr(W_{p/q})=-2+it$ we have that the Farey word $W_{p/q}$ has disjoint isometric disks.
\end{proof}

Theorem~\ref{thmiso} and Equation~\ref{isoin})  together have the following consequence.
\begin{corollary}\label{cor1}
  Let $\tr(W_{p/q})=-x+it$, $x\geq 2$ and $t\in \IR$.  Then the group $\langle X,W_{p/q}\rangle$ is discrete and free on the indicated generators.
\end{corollary}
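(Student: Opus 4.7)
The plan is to realise $\langle X, W_{p/q}\rangle$ as a group generated by two parabolics, apply Lemma~\ref{lemma2} to conjugate it to $\Gamma_\nu$ for an explicit $\nu$, and then use the Lyndon--Ullman bound (Theorem~\ref{LU}) to place $\nu$ in $\overline{\mathcal{R}}$.

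First, by Lemma~\ref{lemma1}, one has $W_{p/q} = X\cdot B$ where $B := X^{-1} W_{p/q}$ is a conjugate of either $X^{-1}$ or $Y$, and in particular a non-identity parabolic. This presents $\langle X, W_{p/q}\rangle = \langle X, B\rangle$ as a two-parabolic group with $\tr(XB) = \tr W_{p/q} = -x+it$, so that $\nu := \tr(XB) - 2 = -(x+2)+it$ satisfies $\Re\nu \leq -4$, and in particular $\nu\neq 0$. Lemma~\ref{lemma2} then identifies $\langle X, W_{p/q}\rangle$ up to $\PSL(2,\IC)$-conjugacy with $\Gamma_\nu$, reducing the corollary to showing $\Gamma_\nu$ is discrete and free of rank two.

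Second, a direct geometric inspection of the convex hull $K = \mathrm{conv}(\overline{\ID}(0,2)\cup\{\pm 4\})$ from Lyndon--Ullman shows that $K\cap\{\Re z \leq -4\} = \{-4\}$. Hence, provided $(x,t)\neq(2,0)$, we have $\nu\in\IC\setminus K\subset\mathcal{R}$ by Theorem~\ref{LU}, so that $\Gamma_\nu$ is discrete and free on $\{X, Y_\nu\}$ by the very definition of $\mathcal{R}$. The $\PSL(2,\IC)$-conjugacy then transports both properties to $\langle X, W_{p/q}\rangle$; freeness on the \emph{indicated} generators follows because any two-element generating set of a rank-two free group is automatically a free basis (Hopfian property of $F_2$).

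The main obstacle is the boundary case $(x,t)=(2,0)$, where $\nu = -4$ is a cusp point sitting on $\partial\mathcal{R}$ rather than in its interior. Discreteness of $\Gamma_{-4}$ is then immediate from Lemma~\ref{lem:bdry_discrete}, while freeness follows by algebraic convergence: picking any sequence $\nu_n\in\mathcal{R}$ with $\Re\nu_n<-4$ and $\nu_n\to -4$, each $\Gamma_{\nu_n}$ is free of rank two, and in the algebraic limit the generators $X$ and $Y_{-4}$ remain distinct non-commuting parabolics (only the Farey word at slope $1/1$ acquires trace $-2$ without collapsing to the identity), so no new algebraic relation is introduced, and $\Gamma_{-4}\cong F_2$.
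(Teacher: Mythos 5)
Your argument is correct, but it is genuinely different from the proof in the paper. You reduce to the two-parabolic normal form: by Lemma~\ref{lemma1} the element $X^{-1}W_{p/q}$ is a conjugate of $X^{-1}$ or of $Y$, hence a non-identity parabolic, so Lemma~\ref{lemma2} conjugates $\langle X,W_{p/q}\rangle$ onto $\Gamma_\nu$ with $\nu=\tr W_{p/q}-2=-(x+2)+it$ and $\Re\nu\leq -4$; the Lyndon--Ullman bound (Theorem~\ref{LU}) then places $\nu$ in $\mathcal{R}$ except at $\nu=-4$, which you treat by a limiting argument. This is exactly the mechanism of Theorem~\ref{thm:rec} combined with Theorem~\ref{LU}, and it does prove the statement. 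The paper instead argues directly with the isometric disks of $W_{p/q}$: the identity $a+d-2=c$ of Theorem~\ref{thmiso} forces the unit translate $\tilde{D}_1=D_1-1$ to be tangent to $D_2$, so the region $\tilde{S}$ of Equation~(\ref{19}) is a fundamental domain whose $\IZ$-translates tile the plane, and the Klein combination theorem gives discreteness and freeness. The two routes buy different things: yours is shorter and pins down the conjugacy class of $\langle X,W_{p/q}\rangle$ precisely (as a $\Gamma_\nu$ with $\Re\nu\leq-4$, deep inside the slice), but it imports an external theorem and produces no geometric data, whereas the paper's combination argument is precisely what manufactures the fundamental domain $\tilde{S}$ and the tangency point $(1-d)/c$ on which Lemma~\ref{pfp}, the segment $\ell_{p/q}$, and the rest of Section~\ref{sec:PF} are built. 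Two small caveats on your version: for the boundary case $\nu=-4$, freeness needs the Chuckrow/J\o rgensen statement that an algebraic limit of discrete \emph{faithful} representations of $F_2$ is discrete and faithful (Lemma~\ref{lem:bdry_discrete} alone only gives discreteness, and ``no new relation is introduced'' is the conclusion rather than an argument); by contrast the paper's proof handles $t=0$ uniformly, since tangent isometric circles are still admissible in the combination theorem.
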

\begin{proof}
  Let $S$ be the vertical strip of width one between $\frac{1}{2}\Big(\Re\big(\frac{a(\mu)-d(\mu)}{c(\mu)}\big)-1\Big)$ and $\frac{1}{2}\Big(\Re\big(\frac{a(\mu)-d(\mu)}{c(\mu)}\big)+1\Big)$.  Let
  \begin{displaymath}
    D_1=\ID\Big(\frac{a(\mu)}{c(\mu)},\frac{1}{|c(\mu)|}\Big),  \quad D_2=\ID\Big(\frac{-d(\mu)}{c(\mu)},\frac{1}{|c(\mu)|}\Big)
  \end{displaymath}
  and $\tilde{D_1}=D_1-1$ and $\tilde{D_2}=D_2+1$.  Then the $ D_i $ are the isometric circles for $ W_{p/q} $, and each $ \tilde D_i $ is a translate of the respective $ D_i $ (to the left and right respectively; see Figure~\ref{fig:cor1_circles}). Define $ \tilde S $ by
  \begin{equation}\label{19}
    \tilde{S} = (S \cup D_1\cup D_2)\setminus \big( \tilde{D}_1\cup \tilde{D_2}\big).
  \end{equation}

  Theorem~\ref{thmiso} implies that the disks $ \tilde D_1 $ and $ D_2 $ are tangent.  Two things now follow.  First, the translates of $\tilde{S}$ by $n\in\IZ$ fill the plane.  Second, $\tilde{S}$ contains the isometric circles of $W_{p/q}$.  The Klein combination theorem \autocite[Theorem VII.A.13]{Mas} now implies the result since $\hat{\IC}\setminus (D_1\cup D_2)$ is a fundamental domain for the action of $W_{p/q}$.
\end{proof}
\begin{figure}
  \centering
  \includegraphics[width=0.8\textwidth]{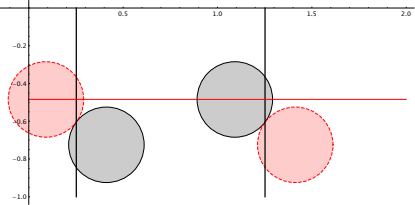}
  \caption{The isometric circles of $W_{3/4}$ when $\tr(W_{3/4})=-2+i$ (in grey) and their translates (in red). \label{fig:cor1_circles}}
\end{figure}

There is one further piece of information we would like out of Corollary~\ref{cor1}: that the point of tangency of the isometric disks and their translates is a parabolic fixed point.  Let $h=h_{p/q}$ represent $W_{p/q}$.  This point of tangency can be calculated to be
\begin{displaymath}
  \frac{a-d}{2c}-\frac{1}{2} =  \frac{a-d-c}{2c} = \frac{1-d}{c}.
\end{displaymath}
Then
\begin{displaymath}
h(z_0) = \frac{a z_0+b}{c z_0+d} = \frac{a \frac{1-d}{c}+b}{c \frac{1-d}{c}+d} =\frac{1+c-d}{c}=z_0+1
\end{displaymath}
and so with $f$ representing $X$ we have shown $f^{-1}h(z_0)=z_0$,  so $z_0$ is a fixed point, and further $X^{-1}W_{p/q}$ is parabolic as previously observed.  We have proved the following lemma.

\begin{lemma}\label{pfp}
  The point $\frac{1-d}{c}\in \partial \tilde{S}$,  a point of tangency of the isometric disks of $W_{p/q}$ and their unit translates, and with $\tilde{S}$ defined at Equation~(\ref{19}), is a parabolic fixed point.\qed
\end{lemma}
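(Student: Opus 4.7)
The plan is to exhibit $z_0 = (1-d)/c$ as the unique fixed point of the parabolic element $f^{-1}h$, where $h = h_{p/q}$ represents $W_{p/q}$. Two ingredients already in place do all the work: the algebraic identity $c = a+d-2$ from Theorem~\ref{thmiso}, and the observation --- immediate from Lemma~\ref{lemma1} --- that $X^{-1}W_{p/q}$ is a conjugate of one of the parabolic generators and is therefore itself parabolic, hence has a unique fixed point in $\oC$.

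First I would locate $z_0$ as a tangency point. The isometric disks $D_1, D_2$ of $W_{p/q}$ have centers $a/c$ and $-d/c$ and common radius $1/|c|$, and the Klein combination construction in the proof of Corollary~\ref{cor1} gave that $\tilde D_1 = D_1 - 1$ is tangent to $D_2$. Since the radii agree, the tangency point is just the midpoint of the two centers, $(a-c-d)/(2c)$; substituting $a - c = 2 - d$ (a rearrangement of Theorem~\ref{thmiso}) collapses this to $(2-2d)/(2c) = (1-d)/c$, matching the formula in the statement.

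Next I would verify $h(z_0) = z_0 + 1$ by direct substitution. The denominator $cz_0 + d$ equals $(1-d)+d = 1$, so $h(z_0) = az_0 + b = (a(1-d) + bc)/c$. Using $ad - bc = 1$ the numerator simplifies to $a - 1$, and a second application of $c = a+d-2$ rewrites $a - 1$ as $1 + c - d$, whence $h(z_0) = (1-d)/c + 1 = z_0 + 1$. Since $f(z) = z+1$, this reads $f^{-1}h(z_0) = z_0$, so $z_0$ is a fixed point of the parabolic $X^{-1}W_{p/q}$, and by uniqueness it is \emph{the} fixed point.

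I do not anticipate a real obstacle: the whole lemma is a two-line algebraic verification once Theorem~\ref{thmiso} is available. The only minor bookkeeping point is to track which tangency is in play: the same argument applied to the tangency of $D_1$ with $\tilde D_2 = D_2 + 1$ produces the parabolic fixed point $1 + (1-d)/c$, which differs from $z_0$ by an integer translate (as it must, since both lie on the orbit of $f$ through a single parabolic fixed point of $f^{-1}h$); the lemma identifies the one sitting on $\partial \tilde S$.
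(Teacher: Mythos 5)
Your proposal is correct and follows essentially the same route as the paper: identify $z_0=(1-d)/c$ as the midpoint of the tangent circles' centers via the identity $c=a+d-2$, then verify $h(z_0)=z_0+1$ by direct substitution so that $z_0$ is the fixed point of the parabolic $f^{-1}h$. Your closing remark about the other tangency point $1+(1-d)/c$ being the integer translate on the $f$-orbit is a harmless addition not present in the paper.
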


\subsection{Canonical peripheral quasidisks}

We want to analyse the pairing of the isometric circles of $W_{p/q}$ further.  We recall the notation from Equation~(\ref{isofarey}), so $ h_{p/q} $ maps $ \partial D_2 $
onto $ \partial D_1 $. In order to compute the distance between these discs, we want to find $z_0\in \partial D_2$ such that $|h(z_0)-z_0|$ is minimised (for convenience, we now drop the
subscripts $p/q$ as the slope is fixed). Let us make the following guess for the form of $ z_0 $:
\begin{displaymath}
  z_0=-\frac{d}{c}+ \frac{\zeta}{c}, \quad |\zeta|=1
\end{displaymath}
We calculate that
\begin{displaymath}
 h(z_0)= \frac{az_0+b}{cz_0+d} =  \frac{-\frac{ad}{c}+ \frac{a\zeta}{c}+b}{\zeta} = \frac{1}{c} \frac{a\zeta-1}{\zeta}  = \frac{a}{c} - \frac{\bar \zeta}{c}
\end{displaymath}
Then
\begin{displaymath}
 h(z_0)-z_0 =  \frac{a}{c} - \frac{\bar \zeta}{c}+\frac{d}{c} - \frac{\zeta}{c} = \frac{c+2}{c} - \frac{\zeta+\bar \zeta}{c} = 1+\frac{2-(\zeta+\bar \zeta)}{c}.
\end{displaymath}
Since $ c \neq 0 $, it suffices to minimise $|c||h(z_0)-z_0|$. Therefore we look at
\begin{displaymath}
  |c+2-(\zeta+\bar \zeta)| = | a+d -2\Re(\zeta)|.
\end{displaymath}

(Thus,  with $a+d=-2+it$, we find  $\zeta=-1$, and $h(z_0)=z_0$. That is, when $ \mu $ lies on the boundary of our conjectured neighbourhood (the inverse image of the line $ \Re z = -2 $), the isometric
circles become tangent.)

Now the line segment $ \ell_{p/q} $ joining $z_0$ to $h_{p/q}(z_0)$ will lie entirely in $\tilde{S}$ provided that the isometric disks have not twisted too far around.  In particular it is enough if the real part of the distance between the fixed points exceeds twice the radius of the isometric disks.  That is, if
\begin{displaymath}
  \Big|\Re\Big(\frac{a_{p/q}+d_{p/q}}{c_{p/q}}\Big)\Big| \geq \frac{2}{|c_{p/q}|}.
\end{displaymath}
Using Theorem~\ref{thmiso}, we calculate that
\begin{align*}
  \Re\Big(\frac{a_{p/q}+d_{p/q}}{c_{p/q}}\Big) &=  \Re\Big(\frac{c_{p/q}+2}{c_{p/q}}\Big) =1+ \Re\Big(\frac{2}{c_{p/q}}\Big) \\
                                                 &= 1+ \frac{2}{|c_{p/q}|^2} \Re\big(c_{p/q}\big)
\end{align*}
now requiring
\begin{displaymath}
  |c_{p/q}|^2+ 2 \Re\big(c_{p/q}\big) \geq 2|c_{p/q}|.
\end{displaymath}
This is true if $\Re(c_{p/q})\leq -4$.  Under these conditions the line segment
\begin{displaymath}
\ell_{p/q} = \big[-\frac{1+d_{p/q}}{c_{p/q}},\frac{a_{p/q}+1}{c_{p/q}}\big]
\end{displaymath}
now has the property that it lies entirely in $\tilde{S}$ with its endpoints on $\partial \tilde{S}$ and these endpoints are identified by $h_{p/q}$.

For convenience, introduce the notation $\Gamma_{p/q}=\langle f,h_{p/q} \rangle$ representing the group $\langle X,W_{p/q}\rangle\subset \PSL(2,\IC)$.  We identified a fundamental domain
$\tilde{S}$ for the action of $\Gamma_{p/q}$ on $\oC\setminus \Lambda_{p/q}$, where of course $\Lambda_{p/q}$ is the limit set of $\Gamma_{p/q}$.  The quotient
\begin{displaymath}
  (\oC\setminus \Lambda_{p/q})/\Gamma_{p/q}
\end{displaymath}
is the four-times punctured sphere $S^2_4$ (recall $\Gamma_{p/q}=\langle f,h_{p/q} \rangle = \langle f,f^{-1}h_{p/q} \rangle $ is a Schottky group generated by two parabolics).  The line segment $\ell_{p/q}$
projects to a simple closed curve in the homotopy class of $h_{p/q}$ representing a simple closed curve separating one pair of punctures from another. We remark that the projection of $\ell_{p/q}$ is smooth
away from one corner and the angle at that corner tends to $\pi$ as $t\to 0$. The Schottky lift of the projection of $\ell_{p/q}$ into $\IS^2_4$ is a quasiline through $\infty$ (we have no control on the
distortion here even though  we expect that we are a bounded hyperbolic distance from a Fuchsian group on the rational pleating ray $\mathcal{P}_{p/q}$,  so there is a nice quasiline which must pass through
the midpoint of $\ell_{p/q}$ for reasons of symmetry).  This quasiline must be
\begin{displaymath}
  L_{p/q} = \bigcup_{g\in \langle f,h_{p/q}\rangle} g(\ell_{p/q}).
\end{displaymath}
It consists of the translates of $\ell_{p/q}$ by $f^{n}$, $n\in \IZ$, together with images which lie in the union of the two isometric circles of $h_{p/q}$ and their integer translates.  We note that
\begin{displaymath}
  h_{p/q}(\infty)=\frac{a_{p/q}}{c_{p/q}}, \quad h_{p/q}^{-1}(\infty)=-\frac{d_{p/q}}{c_{p/q}}
\end{displaymath}
and these are parabolic fixed points on $L_{p/q}$ (conjugates of $f$) as well as being the centers of the isometric circles.  The parabolic fixed point we earlier identified at Lemma~\ref{pfp},
that is the point $z_\infty=\frac{1-d}{c}$,  also lies in $L_{p/q}$ and is not a conjugate of $f$ (since it is not conjugate in the abstract group $\langle x,y \rangle$ from which the rational
words come, again a purely topological consequence of the fact that they represent simple closed curves on the four-times punctured sphere.) The translates of $\ell_{p/q}$ under $\langle h_{p/q} \rangle$
form a log-spiral connecting the fixed points of $h_{p/q}$.  This is illustrated in the examples of Figure~\ref{fig:Spirals}.

\begin{figure}
  \centering
  \includegraphics[width=\textwidth]{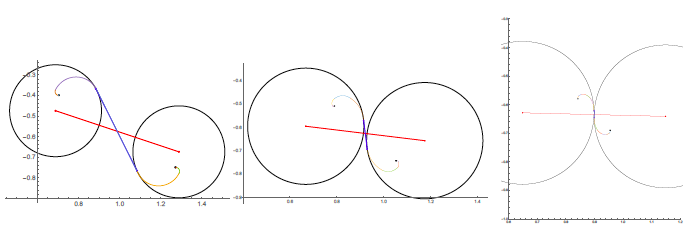}
  \caption{Red lines connect isometric circle centers,  spirals connect the fixed points of $h_{3/4}$.  With $\tr(W_{3.4})=-2+it$,  from left $t=2$,  $t=0.5$ and $t=0.1$.\label{fig:Spirals}}
\end{figure}

If we denote by $H_{p/q}^{\pm}$ the components of $\IC\setminus L_{p/q}$,  then
\begin{displaymath}
  H_{p/q}^{\pm}/\Gamma_{p/q}
\end{displaymath}
is a twice punctured disk with boundary a projection of $\ell_{p/q}$.

It is not relevant to the proof of the theorem, but we can give some bounds on the position of the invariant quasiline.
\begin{lemma}
  The invariant quasiline $L_{p/q}$ lies in the strip
  \begin{displaymath}
    \Big\{z\in \IC: \Im(\frac{a_{p/q}}{c_{p/q}})+\frac{1}{|c_{p/q}|}\leq \Im(z) \leq \Im(-\frac{d_{p/q}}{c_{p/q}}) - \frac{1}{|c_ {p/q}|}\Big\}
  \end{displaymath}
\end{lemma}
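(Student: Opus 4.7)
My plan is to decompose $L_{p/q}$ according to the $\Gamma_{p/q}$-orbit structure coming from Corollary~\ref{cor1}: every image of $\ell_{p/q}$ under $\Gamma_{p/q} = \langle f, h_{p/q}\rangle$ is either a horizontal translate $\ell_{p/q} + n$ for some $n \in \IZ$, or else has $h_{p/q}^{\pm 1}$ in its reduced form and is therefore confined to an $f$-translate of the isometric disk $D_1$ or $D_2$, since $h_{p/q}$ pairs the exteriors of these disks with their partners' interiors. I would bound each type of piece separately and then take the union.

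First, for the translates of $\ell_{p/q}$: the endpoints $(a+1)/c = a/c + 1/c \in \partial D_1$ and $-(1+d)/c = -d/c - 1/c \in \partial D_2$ have imaginary parts $\Im(a/c) + \Im(1/c)$ and $\Im(-d/c) - \Im(1/c)$ respectively. Since $f$ is a horizontal translation and $\Im$ is linear along the straight segment $\ell_{p/q}$, these endpoint values bound the imaginary part on every point of every $f$-translate. Using the elementary bound $|\Im(1/c)| \leq 1/|c|$, the endpoint imaginary parts are then controlled by $\Im(a/c) + 1/|c|$ from below and $\Im(-d/c) - 1/|c|$ from above, matching the stated strip.

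Second, for the interior pieces: each lies inside an $f$-translate of a disk of radius $1/|c|$ centered at $a/c$ or $-d/c$. To fit these into the stated strip (which excludes the full vertical extent of the disks) I would iterate the endpoint analysis on $h_{p/q}^k(\ell_{p/q})$ using the derivative bound $|h_{p/q}'(z)| = 1/|cz+d|^2$ on $\partial D_1$ and $\partial D_2$; this forces geometric contraction of the endpoint offsets so that the spiral pieces cluster in toward the fixed points of $h_{p/q}$ rather than spread across the disks, and the same $\pm \Im(1/c)$ analysis as above applies to each iterate with scaled-down offsets.

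The main obstacle is the sign bookkeeping needed to verify the inequality directions of the stated strip correctly; the narrow strip is meaningful only when $\Im(a/c) + 1/|c| \leq \Im(-d/c) - 1/|c|$, i.e.\ when the two centers are vertically well-separated. Here I would invoke Theorem~\ref{thmiso}'s identity $c = a + d - 2$ to write $(a+d)/c = 1 + 2/c$, which gives $\Im(-d/c) - \Im(a/c) = 2\Im(1/c)$, expressing the signed vertical displacement of the centers in terms of $\Im(1/c)$ alone. Combined with $|\Im(1/c)| \leq 1/|c|$ this both identifies the parameter regime in which the strip is non-trivial and pins down the orientation of the $\pm \Im(1/c)$ endpoint offsets, so that the $f$-translate bound and the disk-interior bound close up to give exactly the inequalities claimed.
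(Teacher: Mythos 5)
Your decomposition of $L_{p/q}$ into $f$-translates of $\ell_{p/q}$ plus pieces trapped in translates of the isometric disks is exactly the right picture, and your treatment of the first type of piece (endpoints on the circles, $\Im$ affine along the segment, horizontal translation preserves $\Im$) is essentially the paper's whole argument. The paper's proof is one line: the strip is \emph{the smallest horizontal strip containing the isometric circles of $W_{p/q}$}, and every piece of $L_{p/q}$ is either a straight segment with endpoints on those circles or lies inside an integer translate of one of the disks, hence lies in that strip by convexity. No further analysis of the spiral pieces is needed.

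The gap in your proposal comes from taking the displayed inequalities at face value. Your own computation is the tell: from $c=a+d-2$ you get $\Im(-d/c)-\Im(a/c)=\pm 2\Im(1/c)$, and since $\lvert\Im(1/c)\rvert\le 1/\lvert c\rvert$ the condition $\Im(a/c)+1/\lvert c\rvert\le\Im(-d/c)-1/\lvert c\rvert$ forces $\lvert\Im(1/c)\rvert=1/\lvert c\rvert$, i.e.\ $c$ purely imaginary, and even then the "strip" collapses to a single horizontal line. So the literal statement is vacuous for generic $\mu$; the signs of the $1/\lvert c\rvert$ offsets (and the ordering of the two centres --- the paper notes $\Im(a/c)>\Im(-d/c)$ after normalising $\mu$ to the positive quadrant) are garbled in the lemma as printed, and the intended strip is the one \emph{containing} the disks, not one squeezed between them. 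This should have prompted you to reinterpret the statement rather than to engineer a regime in which it is nonvacuous. The contraction argument you sketch for the spiral pieces would not close the gap in any case: $h_{p/q}(\ell_{p/q})$ is an arc inside $D_1$ with an endpoint on $\partial D_1$, and the derivative bound $\lvert h'\rvert=1/\lvert cz+d\rvert^2$ on the isometric circles gives no a priori control keeping the early iterates away from the top and bottom of the disks. Indeed the paper explicitly flags the sharper containment (the strip narrowing to where the spiral "turns over") as only computationally observed, not proved.
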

\begin{proof}
  By construction $\ell_{p/q}$ lies in,  and separates $\tilde{S}$. Its translates together with the translates of the isometric disks of $W_{p/q}$ separate both
  the ordinary set of $\langle f,h_{p/q} \rangle$ and  the plane into two parts.   The strip is the smallest horizontal strip containing the isometric circles of $W_{p/q}$.
  Note that $\Im(\frac{a_{p/q}}{c_{p/q}}) >  \Im(-\frac{d_{p/q}}{c_{p/q}})$ and that both are negative.  This particular fact holds if we choose, as we may,  $\mu$ to be in  the positive quadrant of $\IC$.
\end{proof}

Our computational investigations suggest that in fact the width of this strip can be improved to where the spiral ``turns over''.  This appears proportional to the difference of
the imaginary parts of the fixed points.  A  consequence would be that as $t\to 0$ the strip turns into a line and the quasilines $L_{p/q}$ converge to the line through the fixed
points of $h_{p/q}$,  which is a line in the limit set of the cusp group.

By analogy with Keen and Series we call the component $H_{p/q}$ of $\IC\setminus L_{p/q}$ which does not contain $0$ a \textit{canonical peripheral quasidisk} if
\begin{enumerate}
  \item $\Lambda(\Gamma_{p/q}) = \overline{H_{p/q}}\cap \Lambda(\Gamma_\mu)$, and
  \item $\tr(W_{p/q})\in \{z = x+iy \in \IC: x< -2 \}$.
\end{enumerate}
Notice that if we are at any value $\mu\in\mathcal{R}$,  then there \emph{some} slope $ p/q $ such that $ \Gamma_\mu $ admits the canonical peripheral quasidisk $ H_{p/q} $,
since each such group is quasiconformally conjugate to one on a pleating ray where there is such a peripheral circle ($F$-peripheral in \cite{KS}).  There seems to be no way
of guaranteeing that the large scale geometry of the boundary quasiline is bounded, but we do know that it is for $L_{p/q}$.

\subsection{Completing the proof}
We now give a series of lemmas imitating the proofs given for the case of a pleating ray in \cite{KS}.  Set  $S^*_{p/q}=\tilde{\cal S}\cap H_{p/q}$; this is a fundamental domain defined by the isometric
circles of $h_{p/q}$ and the line segment $\ell_{p/q}$.  Recall the parabolic cusp point given by Lemma~\ref{pfp}  in $\partial H_{p/q}$ (and also in $S^*_{p/q}$).  The following lemma is immediately
clear from  construction.

\begin{lemma}\label{priscan}
  An $F$-peripheral disk in the sense of \cite{KS} is a canonical peripheral quasidisk.
\end{lemma}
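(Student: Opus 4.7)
The plan is to show both conditions in the definition of canonical peripheral quasidisk follow directly from the defining properties of an $F$-peripheral disk in \cite{KS}, once we identify the boundary quasiline $L_{p/q}$ with the round invariant circle available along the rational pleating ray.

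First I would observe that $F$-peripheral disks in \cite{KS} are only defined on the rational pleating ray $\mathcal{P}_{p/q} = P_{p/q}^{-1}((-\infty,-2])$. So $P_{p/q}(\mu) = \tr W_{p/q}(\mu)$ is real and at most $-2$, and, away from the cusp endpoint, strictly less than $-2$; this is exactly condition~(2) in the definition of a canonical peripheral quasidisk. By Lemma~\ref{fuchs}, $\Gamma_{p/q} = \langle f,h_{p/q}\rangle$ is then Fuchsian, so it stabilises a round circle $C\subset\oC$, and the $F$-peripheral disk of \cite{KS} is the component of $\oC\setminus C$ not containing $0$.

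Next I would match $H_{p/q}$ with the $F$-peripheral disk by showing $L_{p/q} = C$. In the Fuchsian case, the isometric-circle pairing performed by $h_{p/q}$ acts as a hyperbolic motion of the disk bounded by $C$, and the minimising configuration for $|h_{p/q}(z_0) - z_0|$ over $z_0 \in \partial D_2$ that produces our segment $\ell_{p/q}$ must be realised on $C$, since $C$ is pointwise the set on which the two isometric circles meet symmetrically under the Fuchsian stabiliser. Hence $\ell_{p/q}\subset C$; by $\Gamma_{p/q}$-equivariance of both $C$ and the orbit construction, $L_{p/q} = \bigcup_{g\in\Gamma_{p/q}} g(\ell_{p/q}) \subset C$, and a density argument (the orbit of a single interior arc under a non-elementary Fuchsian group accumulates on the whole circle) gives $L_{p/q} = C$. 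So $H_{p/q}$ is precisely the $F$-peripheral disk.

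Finally, condition~(1), $\Lambda(\Gamma_{p/q}) = \overline{H_{p/q}}\cap\Lambda(\Gamma_\mu)$, is part of the defining property of $F$-peripheral disks in \cite{KS}: the limit set of the Fuchsian group $\Gamma_{p/q}$ sits on its invariant round circle $C = \partial H_{p/q}$, and the $F$-peripheral property says precisely that no other ambient limit points of $\Gamma_\mu$ accumulate in $\overline{H_{p/q}}$. The one step that needs genuine care is the identification $\ell_{p/q} \subset C$, i.e.\ that the minimising pair selected by our construction is actually the Fuchsian-symmetric pair on the invariant circle; once this is in place, the lemma is just a matter of matching definitions.
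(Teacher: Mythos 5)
Your proposal is correct and follows essentially the same route as the paper: on the pleating ray $\tr W_{p/q}$ is real and below $-2$, the group $\langle f,h_{p/q}\rangle$ is Fuchsian with $h_{p/q}$ hyperbolic and disjoint isometric disks, and the segment $\ell_{p/q}$ (hence the whole quasiline $L_{p/q}$) lies on the invariant round circle through the fixed points of $h_{p/q}$, so $H_{p/q}$ is exactly the $F$-peripheral disk. The paper compresses this to a single sentence, whereas you spell out the minimisation step placing $\ell_{p/q}$ on the invariant circle; the content is the same.
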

\begin{proof}
  In this case $h_{p/q}$ is hyperbolic, with disjoint isometric disks and $\ell_{p/q}$ is a segment of the line through its fixed points (and also through isometric circles) and orthogonal to them.
\end{proof}

The following lemma is analogous to the results of \cite{KS95}.
\begin{lemma}
  Fix a rational slope $p/q$. $L_{p/q}$ moves continuously with $\mu$ and the data $a_{p/q}, b_{p/q}, c_{p/q}$ and $d_{p/q}$, as does the associated fundamental domain $S^*_{p/q}$.
\end{lemma}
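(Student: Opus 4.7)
The plan is to leverage the fact that all the relevant data --- namely $a_{p/q}(\mu), b_{p/q}(\mu), c_{p/q}(\mu), d_{p/q}(\mu)$ --- are polynomials in $\mu$, hence depend holomorphically (and so continuously) on $\mu$. Continuity of all the derived objects follows from continuity of elementary algebraic operations, provided the various denominators do not vanish; I would therefore begin by noting that throughout the region where our construction is defined (namely $\Re\tr(W_{p/q})<-2$), we have $c_{p/q}(\mu)\neq 0$, so that every rational expression in the matrix entries that we shall write down is well defined and continuous.

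Next I would address the fundamental domain. The centres $a_{p/q}/c_{p/q}$ and $-d_{p/q}/c_{p/q}$ and the radius $1/|c_{p/q}|$ of the isometric disks $D_1,D_2$ depend continuously on $\mu$, as do their unit translates $\tilde{D}_1,\tilde{D}_2$ and the strip $S$. Hence the fundamental domain
\begin{displaymath}
  \tilde{\mathcal{S}} = (S\cup D_1\cup D_2)\setminus(\tilde{D}_1\cup\tilde{D}_2)
\end{displaymath}
moves continuously (in the Hausdorff metric on compacta). The endpoints $-(1+d_{p/q})/c_{p/q}$ and $(a_{p/q}+1)/c_{p/q}$ of the segment $\ell_{p/q}$ are continuous rational functions of the matrix entries, so $\ell_{p/q}$ itself moves continuously in $\mu$.

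For the quasiline $L_{p/q}=\bigcup_{g\in\langle f,h_{p/q}\rangle}g(\ell_{p/q})$ the argument is slightly more delicate because the union is infinite. Each element $g$ of $\langle f,h_{p/q}\rangle$ is represented by a word in $X$ and $W_{p/q}(\mu)$, whose matrix coefficients are polynomial in $\mu$; hence each individual translate $g(\ell_{p/q})$ moves continuously. On any compact set $K\subset\hat{\IC}\setminus\Lambda(\Gamma_{p/q})$, properly discontinuous action of $\Gamma_{p/q}$ ensures that only finitely many translates meet $K$, and continuity of $L_{p/q}\cap K$ then follows from continuity of each of these finitely many pieces. Continuity at the limit set is handled by invoking the Sullivan--Thurston/S{\l}odkowski holomorphic motion machinery already used in Theorem~\ref{thm3}: the limit set $\Lambda(\Gamma_{p/q})$ moves by a holomorphic motion of $\hat{\IC}$, so the accumulation points of $L_{p/q}$ move continuously as well. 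Combining, $L_{p/q}$ varies continuously in $\mu$ in the Hausdorff topology, and the complementary component $H_{p/q}$ (the one not containing $0$) varies continuously too. Since $S^{*}_{p/q}=\tilde{\mathcal{S}}\cap H_{p/q}$ is an intersection of two sets each moving continuously with $\mu$, it also moves continuously.

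The main obstacle is the step that controls the infinite union $L_{p/q}$ at the limit set: without the holomorphic motion argument one only has continuity on compacta of the ordinary set, which is not sufficient to conclude continuity of the global quasiline or of the complementary component $H_{p/q}$. Once the holomorphic motion of $\Lambda(\Gamma_{p/q})$ is in hand the remaining assertions are essentially bookkeeping from the polynomial nature of the matrix entries.
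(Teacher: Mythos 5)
Your proposal is correct and follows essentially the same route as the paper, whose entire proof is the single observation that the defining points (vertices of $S^*_{p/q}$) move holomorphically because the matrix entries are polynomials in $\mu$, while the set itself only moves continuously. You supply considerably more detail than the paper does --- notably the treatment of the infinite union defining $L_{p/q}$ via proper discontinuity on compacta and the holomorphic motion of the limit set --- but this is an elaboration of the same underlying idea rather than a different argument.
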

\begin{proof}
  In fact the defining points (vertices of $S^*_{p/q}$) move holomorphically,   but as a set $S^*_{p/q}$  does not.
\end{proof}

Next the analogue of \cite[Proposition 3.1]{KS}.
\begin{lemma}\label{lem:cpqopen}
  Fix a rational slope $p/q$. The set
  \begin{displaymath}
    \{\mu :\Gamma_\mu \mbox{ admits the canonical peripheral quasidisk $H_{p/q}$} \}
  \end{displaymath}
  is open.
\end{lemma}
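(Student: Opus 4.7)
The plan is to verify openness of each of the two clauses in the definition of a canonical peripheral quasidisk separately. Clause (2), namely $\tr(W_{p/q}) \in \{\Re z < -2\}$, is trivially open because $\mu \mapsto \tr W_{p/q}(\mu)$ is a polynomial and the halfspace is open. The substantive content is to show that the limit-set identity in clause (1), $\Lambda(\Gamma_{p/q}) = \overline{H_{p/q}} \cap \Lambda(\Gamma_\mu)$, persists under small perturbations of $\mu$, which I would approach in two stages.

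First I would verify that the geometric data defining $H_{p/q}(\mu)$ itself persists in a neighbourhood of any $\mu_0$ in the set. The isometric disks of $h_{p/q}(\mu)$ are disjoint at $\mu_0$ because $|a_{p/q}+d_{p/q}| > 2$ there, and this is an open condition by continuity of the matrix entries. The tangency of these disks with their unit translates is the polynomial identity supplied by Theorem~\ref{thmiso} and Lemma~\ref{pfp}, so it holds identically in $\mu$. The segment $\ell_{p/q}(\mu)$ depends continuously (in fact holomorphically) on $\mu$, and its inclusion inside $\tilde{S}(\mu)$ is an open condition controlled by the geometry of the isometric disks. Consequently $S^*_{p/q}(\mu)$ continues to exist as a fundamental domain for $\Gamma_{p/q}(\mu)$, and the bounding quasiline $L_{p/q}(\mu)$ together with its component $H_{p/q}(\mu)$ is well-defined on a neighbourhood of $\mu_0$.

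Second, I would establish the precise invariance of $H_{p/q}(\mu)$ under $\Gamma_{p/q}(\mu)$ inside $\Gamma_\mu$: that $g(H_{p/q}(\mu)) \cap H_{p/q}(\mu) = \emptyset$ for every $g \in \Gamma_\mu \setminus \Gamma_{p/q}$. Since $\Gamma_{p/q}$ already stabilises $H_{p/q}$ by construction of the invariant quasiline $L_{p/q}$, this precise invariance is exactly what translates into the desired limit-set identity. At $\mu_0$ it holds by hypothesis. For $\mu$ near $\mu_0$, I would invoke the Klein--Maskit second combination theorem to reduce the infinite family of non-intersection conditions to finitely many checks on coset representatives, in the spirit of Corollary~\ref{cor1} --- essentially, that $Y_\mu$ and the relevant conjugates move $H_{p/q}(\mu)$ strictly off itself. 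Each such non-intersection of closed sets with continuously varying defining data is an open condition.

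The main obstacle, already flagged by the preceding lemma, is that $S^*_{p/q}$ does not move continuously as a set --- only its vertex data does --- and $L_{p/q}$ is an infinite quasiline. The combinatorial reduction supplied by the combination theorem is therefore essential: it packages the infinite family of precise-invariance conditions into a finite check that behaves well under continuous deformation, and without it one could not pass directly from continuity of the finite geometric data (vertices, isometric circles, the segment $\ell_{p/q}$) to openness of the global limit-set identity.
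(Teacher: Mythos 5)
Your first stage (openness of the trace condition, persistence of the isometric disks, the segment $\ell_{p/q}$ and the domain $S^*_{p/q}$) matches what the paper does and is fine. The gap is in your second stage. You assert that the Klein--Maskit combination theorem reduces the precise invariance of $H_{p/q}$ under $\Gamma_{p/q}$ inside $\Gamma_\mu$ to ``finitely many checks on coset representatives,'' but you do not exhibit the finite configuration that would make this work, and it is not available from the material at hand: $\Gamma_{p/q}=\langle f,h_{p/q}\rangle$ generally has infinite index in $\Gamma_\mu=\langle f,g\rangle$, the set whose precise invariance you need is bounded by $L_{p/q}=\bigcup_{g\in\Gamma_{p/q}}g(\ell_{p/q})$, which is an infinite orbit accumulating on $\Lambda(\Gamma_{p/q})$, and Corollary~\ref{cor1} only packages the action of the \emph{subgroup} $\langle X,W_{p/q}\rangle$ into finite data --- it says nothing about where $Y_\mu$ sends $H_{p/q}$. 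So the clause~(1) identity $\Lambda(\Gamma_{p/q})=\overline{H_{p/q}}\cap\Lambda(\Gamma_\mu)$, which is a statement about the limit set of the \emph{ambient} group, is not reached by your argument.

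The paper's proof supplies exactly the two ingredients you are missing. First, since $\Gamma_\mu$ is geometrically finite, every parabolic fixed point is doubly cusped; this lets one excise a horodisk neighbourhood $U$ of the parabolic point of $\partial S^*_{p/q}$ (the point of Lemma~\ref{pfp}) so that the truncated domain $S^*_{p/q}\setminus U$ is \emph{compactly} contained in $\Omega(\Gamma_\mu)$ --- without this truncation the fundamental domain touches $\Lambda(\Gamma_\mu)$ at the cusp and no continuity argument can start. Second, the limit set $\Lambda(\Gamma_\mu)$ moves holomorphically in $\mu$, so for nearby parameters the perturbed truncated domain stays inside the ordinary set of the perturbed ambient group; the $\Gamma_{p/q}$-translates of it then tessellate the perturbed $H_{p/q}$ away from the cusps, which are restored at the end. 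Your proposal, by avoiding both the doubly-cusped property and the holomorphic motion of $\Lambda(\Gamma_\mu)$, leaves the essential containment ``$S^*_{p/q}$ stays in $\Omega(\Gamma_{\mu'})$ for $\mu'$ near $\mu$'' unproved.
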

\begin{proof}
  By definition $\tr(W_{p/q}) \in \{\Re(z)<-2\}$. Choose a small neighbourhood of $\mu$ so that this remains true.  That is $\tr(W_{p/q}(\mu')) \in \{\Re(z)<-2\}$ for $\mu'$ close to $\mu$. Each $\Gamma_\mu$ is geometrically finite,  and therefore each parabolic fixed point is doubly cusped \cite{AM,Mas}. Let $U$ be a horodisk neighbourhood of the parabolic fixed point in $\partial S^*_{p/q}$  (not $\infty$). As $\ell_{p,q}\in \partial H_{p/q}$ is in the domain of discontinuity for $\Gamma_{p,q}$ it is in the ordinary set of $\Gamma_\mu$ and projects to a loop bounding a doubly punctured disk in $S^2_4$.  It follows that $S^*_{p/q}\setminus U$ is compactly supported away from $\Lambda(\Gamma_\mu)$.  This limit set moves holomorphically and so for small time $t$ the varying $(S^*_{p/q})_t\setminus U_t$ lie in the ordinary set of $\Gamma_{\mu_t}$.  The images of $(S^*_{p/q})_t\setminus U_t$ under $(\Gamma_{p/q})_t$ tessellate $(H_{p/q})_t$ apart from the deleted cusp neighbourhoods which we now put back to find a canonical peripheral quasidisk $(H_{p/q})_t$.
\end{proof}

Next a version of \cite[Lemma 3.5]{KS}.

\begin{lemma}\label{lem6}
  Fix a rational slope $p/q$. Suppose that $\Gamma_\mu$ admits $H_{p/q}$,  a canonical peripheral quasidisk.  Then $\mu\in \mathcal{R}$.
\end{lemma}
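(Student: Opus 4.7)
The definition of $\mathcal{R}$ requires that $\Omega(\Gamma_\mu)/\Gamma_\mu$ be topologically the four-times punctured sphere. Assuming $\Gamma_\mu$ is discrete (the presence of the domain $H_{p/q}$, together with the doubly-cusped structure used in the proof of Lemma~\ref{lem:cpqopen}, delivers this), the classification of free-discrete-group quotients from \cite{MS} cited in the introduction forces $\Omega(\Gamma_\mu)/\Gamma_\mu$ to be one of three types: empty; a disjoint union of two triply-punctured spheres; or $S^2_4$. The empty case is immediately excluded, since $H_{p/q}\subset\Omega(\Gamma_\mu)$. My plan is to rule out the remaining incorrect case by exhibiting an embedded twice-punctured disk in $\Omega(\Gamma_\mu)/\Gamma_\mu$ whose boundary loop represents the loxodromic element $W_{p/q}$; no such subsurface can embed inside a disjoint union of triply-punctured spheres, where every essential simple closed curve is peripheral and hence represents a parabolic element.

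The first main step is to show that $H_{p/q}$ is precisely invariant under $\Gamma_{p/q}$ in $\Gamma_\mu$ in the sense of Maskit. Invariance under $\Gamma_{p/q}$ is immediate from the construction of $L_{p/q}$ as a union of $\Gamma_{p/q}$-translates of $\ell_{p/q}$. For the disjointness condition $\gamma(H_{p/q})\cap H_{p/q}=\emptyset$ whenever $\gamma\in\Gamma_\mu\setminus\Gamma_{p/q}$, I would use condition~(1), $\Lambda(\Gamma_{p/q})=\overline{H_{p/q}}\cap\Lambda(\Gamma_\mu)$: any such $\gamma$ sending $H_{p/q}$ onto (or into) itself would preserve this portion of the limit set, thereby normalising $\Gamma_{p/q}$, and a standard argument for free Kleinian groups then forces $\gamma\in\Gamma_{p/q}$. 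Overlap without containment is ruled out by discreteness, since the translates $\gamma(H_{p/q})$ have diameters bounded below and cannot accumulate onto $H_{p/q}$.

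With precise invariance in hand, the first Klein--Maskit combination theorem \cite[VII.A.13]{Mas} embeds $H_{p/q}/\Gamma_{p/q}$ as an open subsurface of $\Omega(\Gamma_\mu)/\Gamma_\mu$. By the analysis following Lemma~\ref{pfp}, this quotient is a twice-punctured disk bounded by the projection of $\ell_{p/q}$, whose free homotopy class in $\Gamma_\mu$ is that of $h_{p/q}$. Condition~(2) gives $\Re\tr W_{p/q}<-2$, so in particular $|\tr W_{p/q}|>2$ and $W_{p/q}$ is loxodromic; hence the boundary loop is essential and non-peripheral, since the only parabolic conjugacy classes in $\Gamma_\mu$ are those of $X$ and $Y$. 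This precludes the two-triply-punctured-spheres case, so $\Omega(\Gamma_\mu)/\Gamma_\mu\cong S^2_4$, and therefore $\mu\in\mathcal{R}$.

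The central obstacle is verifying precise invariance in the quasidisk setting. In the Fuchsian situation (Lemma~\ref{priscan}) the boundary $L_{p/q}$ is a round circle and precise invariance follows from classical disk-pairing arguments; here $L_{p/q}$ is only a quasiline built piecewise from arcs of isometric circles and the segment $\ell_{p/q}$, and the quasidisk can a priori interact with other group elements in ways unavailable in the round-circle picture, so we must rely crucially on condition~(1) as the substitute for the classical orthogonality data that pins down the stabiliser of a round circle.
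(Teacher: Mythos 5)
Your proof is essentially sound in outline but takes a genuinely different route from the paper's at the decisive step. The paper does not invoke the trichotomy of quotient types at all: it uses the fact (Lemma~\ref{lemma1}) that $W_{p/q}$ decomposes as a product of parabolics in two non-conjugate ways, producing a second peripheral quasidisk $\hat{H}_{p/q}$ via the $\IZ_2$ symmetry exchanging $X$ and $Y$, and then exhibits $\Omega(\Gamma_\mu)/\Gamma_\mu$ directly as two twice-punctured disks $H_{p/q}/\Gamma_{p/q}$ and $\hat{H}_{p/q}/\hat\Gamma_{p/q}$ glued along a common boundary (a translation arc of $f$), so that the quotient is $S^2_4$ by construction, exactly as in \cite[Lemma 3.5]{KS}. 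You instead use a single peripheral quasidisk together with the classification from \cite{MS} of quotients of free discrete two-parabolic-generator groups, ruling out the cusp topology because the boundary of your embedded twice-punctured disk is freely homotopic to the loxodromic $h_{p/q}$ while every essential simple closed curve on a triply punctured sphere is peripheral, hence parabolic. Your route is more economical (one disk instead of two) but pays for it by needing $\Gamma_\mu$ to be \emph{free} and discrete up front so that the trichotomy applies; in the paper this is supplied by the surrounding open/closed argument rather than by the hypothesis of the lemma, so you should make that dependence explicit. Both arguments rest on the precise invariance of $H_{p/q}$ under $\Gamma_{p/q}$ (equivalently $H_{p/q}/\Gamma_{p/q}=H_{p/q}/\Gamma_\mu$), which the paper simply asserts with a pointer to \cite{KS}; your sketch of it is the weakest link --- a $\gamma$ preserving $\Lambda(\Gamma_{p/q})$ setwise does not immediately normalise $\Gamma_{p/q}$, only conjugate it to a subgroup with the same limit set, so the ``standard argument'' you allude to needs to be spelled out --- and note that the embedding of the quotient subsurface follows from precise invariance alone, not from the Klein--Maskit combination theorem, which is the wrong tool to cite there.
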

\begin{proof}
  We have $\Gamma_{p/q}=\langle f,h_{p/q}\rangle=\langle f,f^{-1}h_{p/q}\rangle$.  As described earlier there is another group $\hat{\Gamma}_{p/q}$ generated by two
  parabolics in $\Gamma_\mu$ whose product is also $h_{p,q}$.  These groups are not conjugate in $\Gamma_\mu$ but are conjugate when the $\IZ_2$ symmetry that
  conjugates $X$ to $Y$ is added.  This symmetry leaves the limit set set-wise invariant.   Hence both are quasifuchsian with canonical peripheral quasidisks).  The
  remainder of the argument is as in \cite{KS}.  Briefly,   $H_{p/q}/\Gamma_{p/q} = H_{p/q}/\Gamma_{\mu}$ and, with the obvious notation,    $\hat{H}_{p/q}/\hat{\Gamma}_{p/q} = \hat{H}_{p/q}/\Gamma_{\mu}$
  are two different twice punctured disks in the quotient glued along a common boundary (a translation arc of $f$ which lies in ${H}_{p/q}\cap \hat{H}_{p/q}$). Then the quotient is $S^2_4$
  and $\mu\in\mathcal{R}$ by definition.
\end{proof}

It is really the next lemma where we use the fact that the quasidisks $H_{p/q}$ have bounded geometry.  Without this the invariant quasicircles for the peripheral disks could
either become space-filling curves or collapse entirely.  This indeed happens in general with the formation of $b$-groups,  or the geometrically infinite groups on the boundary of $\mathcal{R}$.

\begin{lemma}\label{lem:closed}
  Fix a rational slope $p/q$. Suppose that $\Gamma_{\mu_j}$ admits canonical peripheral quasidisks $H^j_{p/q}$,  and that $\tr(W_{p/q}^j)\to z_0$ with $\Re(z_0)<-2$.
  Then there is a subsequence $\mu_{j_k}\to \mu \in \mathcal{R}$ such that $\Gamma_\mu$ admits a canonical peripheral quasidisk.
\end{lemma}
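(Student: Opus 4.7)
The strategy is three-fold: extract a convergent subsequence using polynomial growth of $P_{p/q}$; transport the canonical peripheral quasidisk $H^{j_k}_{p/q}$ to the limit using continuity of the explicit geometric data from which $L_{p/q}$ is built; and finally invoke Lemma~\ref{lem6} to conclude $\mu\in\mathcal{R}$.

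First, since $P_{p/q}$ is a polynomial of degree $q$ and $P_{p/q}(\mu_j)\to z_0\in\IC$, the sequence $(\mu_j)$ is eventually bounded, so after passing to a subsequence $\mu_{j_k}\to\mu$ with $P_{p/q}(\mu)=z_0$. In particular $\Re \tr W_{p/q}(\mu)<-2$, and Theorem~\ref{thmiso} then forces $c_{p/q}(\mu)=P_{p/q}(\mu)-2\neq 0$ and $|a_{p/q}(\mu)+d_{p/q}(\mu)|>2$, so the isometric disks of $h_{p/q}(\mu)$ are strictly disjoint and condition~(2) in the definition of a canonical peripheral quasidisk is automatic at the limit parameter.

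Second, the data defining the fundamental domain $S^*_{p/q}$ and the invariant quasiline $L_{p/q}$ --- the centres $a_{p/q}/c_{p/q}$ and $-d_{p/q}/c_{p/q}$ and common radius $1/|c_{p/q}|$ of the isometric disks of $h_{p/q}$, their integer translates as in Corollary~\ref{cor1}, and the endpoints $-(1+d_{p/q})/c_{p/q}$ and $(a_{p/q}+1)/c_{p/q}$ of the segment $\ell_{p/q}$ --- are all rational in the matrix entries of $W_{p/q}(\mu)$ with no pole at $\mu$ since $c_{p/q}(\mu)\neq 0$. Hence $S^*_{p/q}(\mu_{j_k})\to S^*_{p/q}(\mu)$ and $L_{p/q}(\mu_{j_k})\to L_{p/q}(\mu)$ uniformly on compact sets. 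Discreteness of $\Gamma_\mu$ follows from J\o rgensen's algebraic convergence theorem, and the Klein combination hypothesis of Corollary~\ref{cor1} passes to the limit, so $\Gamma_{p/q}(\mu)=\langle f,h_{p/q}(\mu)\rangle$ is a Schottky group with fundamental domain $S^*_{p/q}(\mu)$. Take $H_{p/q}(\mu)$ to be the component of $\IC\setminus L_{p/q}(\mu)$ not containing $0$.

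The main obstacle is verifying condition~(1), namely $\Lambda(\Gamma_{p/q}(\mu))=\overline{H_{p/q}(\mu)}\cap\Lambda(\Gamma_\mu)$. The inclusion $\subseteq$ is immediate from $\Gamma_{p/q}(\mu)<\Gamma_\mu$ together with $\Lambda(\Gamma_{p/q}(\mu))\subset\overline{H_{p/q}(\mu)}$ by construction. The reverse inclusion is delicate, and I would handle it by upper semicontinuity of limit sets for algebraically convergent Kleinian groups: any alleged point of $\Lambda(\Gamma_\mu)$ strictly on the wrong side of $L_{p/q}(\mu)$ would, combined with uniform convergence $L_{p/q}(\mu_{j_k})\to L_{p/q}(\mu)$ on compacta, produce nearby points of $\Lambda(\Gamma_{\mu_{j_k}})$ on the wrong side of $L_{p/q}(\mu_{j_k})$ for large $k$, contradicting the hypothesis that $H^{j_k}_{p/q}$ is a canonical peripheral quasidisk. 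The strict inequality $\Re z_0<-2$ is essential here: it prevents any pinching of $L_{p/q}(\mu)$, which is precisely the controlled large-scale geometry that motivated building $L_{p/q}$ from the explicit isometric-disk data rather than from an arbitrary peripheral quasicircle. Once condition~(1) is verified, Lemma~\ref{lem6} delivers $\mu\in\mathcal{R}$.
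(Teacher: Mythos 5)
Your opening moves (bounding $(\mu_j)$ via the polynomial relation $P_{p/q}(\mu_j)\to z_0$, passing the isometric-circle data and $\ell_{p/q}$ to the limit using $c_{p/q}(\mu)=P_{p/q}(\mu)-2\neq 0$, and J\o rgensen for discreteness) agree with the paper. The gap is in your verification of condition~(1). The principle you invoke, ``upper semicontinuity of limit sets for algebraically convergent Kleinian groups,'' is not available in the form you need. What algebraic convergence gives is $\Lambda(\Gamma_\mu)\subseteq\liminf_k\Lambda(\Gamma_{\mu_{j_k}})$ (loxodromic fixed points converge), and that does produce points $w_k\in\Lambda(\Gamma_{\mu_{j_k}})$ inside $H^{j_k}_{p/q}$ approximating a putative bad point $w$. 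But this is not yet a contradiction with your hypothesis: the hypothesis merely places each $w_k$ in $\Lambda(\Gamma_{p/q}(\mu_{j_k}))$. To conclude $w\in\Lambda(\Gamma_{p/q}(\mu))$ and contradict $w\notin\Lambda(\Gamma_{p/q}(\mu))$ you need $\limsup_k\Lambda(\Gamma_{p/q}(\mu_{j_k}))\subseteq\Lambda(\Gamma_{p/q}(\mu))$, i.e.\ upper semicontinuity for the \emph{subgroups} --- and upper semicontinuity of limit sets is exactly what fails for algebraic limits in general; its failure is how degenerations form, which is the scenario this lemma exists to exclude. The step is repairable: $\Gamma_{p/q}(\mu_{j_k})$ is conjugate to $\Gamma_{\nu_k}$ with $\nu_k=P_{p/q}(\mu_{j_k})-2\to z_0-2$ and $\Re(z_0-2)<-4$, so by Theorem~\ref{LU} the $\nu_k$ and their limit stay in a compact subset of $\mathcal{R}$, where the limit sets move by a holomorphic motion (as in Theorem~\ref{thm3}) and hence converge in the Hausdorff metric. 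That justification is the substance of the step and must be supplied, not asserted.

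For comparison, the paper's proof sidesteps this entirely. From the uniform bounds on $a_{p/q},\dots,d_{p/q}$ it extracts a single open set $U$ contained in $H^j_{p/q}$ for all large $j$, hence in $\Omega(\Gamma_\mu)$; so $\Gamma_\mu$ is not degenerate, and by Lemma~\ref{lem6} applied to the $\mu_j$ one gets $\mu\in\overline{\mathcal{R}}$. If $\mu$ lay on $\partial\mathcal{R}$, the nonempty ordinary set would force $\Gamma_\mu$ to be a cusp group with round ordinary-set components, making $\Gamma_{p/q}$ Fuchsian with $\tr h_{p/q}$ real, hence in $(-\infty,-2)$ since $\Re z_0<-2$; but such $\mu$ lies on the open pleating ray, is in $\mathcal{R}$, and carries an $F$-peripheral disk (Lemma~\ref{priscan}). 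Your route, once patched, yields the slightly stronger conclusion that the same quasidisk persists at $\mu$; the paper's route is softer, trading the continuity-of-limit-sets argument for the classification of boundary groups into cusps and degenerate groups.
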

\begin{proof}
  That $\tr(W_{p/q}^j)\to z_0$,  $\Re(z_0)<-2$ means that $a_{p/q},b_{p/q},c_{p/q}$ and $d_{p,q}$ all have finite limits,  that $c_{p/q}\not\to 0$ by Lemma~\ref{lemma3} and that
  therefore the invariant lines bounding $L_{p/q}$ also have a limiting height above and below.  It follows that there is an open set $U$ such that for $j$ sufficiently large $U\subset H_{p/q}^j$.
  Each of the groups $\Gamma_{\mu_j}$ is discrete (and free) and,  after passing to a subsequence if necessary,  the limit group $\Gamma_{\mu}$ is also discrete (and free).  Thus the
  ordinary set of $\Gamma_\mu$ must contain $U$.  By Lemma~\ref{lem6} we have $\mu_j\in \mathcal{R}$ and hence $\mu\in \overline{\mathcal{R}}$.  If $\mu\in \mathcal{R}$ we are done.
  Otherwise $\mu\in \partial\mathcal{R}$, and $\Gamma_\mu$ has nonempty ordinary set $\Omega_\mu=\IC\setminus \Lambda(\Gamma_\mu)$.  Since $\mu$ lies in the boundary of $\mathcal{R}$
  the quotient surface $\Omega/\Gamma_\mu$ can support no moduli.  $\Gamma_\mu$ is torsion free,  so the quotient is a union of triply punctured spheres and the point $\mu$ must be a
  cusp group (see \cite{MS} for these things). Notice that $h_{p/q}$ will have its fixed points in the boundary of a component of the ordinary set,  which are now round circles.
  Thus $\Gamma_{p/q}$ is Fuchsian,  $\tr(h_{p/q})$ is real and therefore $\tr(h_{p/q})\in (-\infty,-2)$.  But these groups lie on the pleating ray and in $\mathcal{R}$ and have
  $F$-peripheral disks.  This completes the proof.
\end{proof}

We now complete the proof of Theorem~\ref{main}. Fix a slope $ p/q $, and let $ \mathcal{N}_{p/q} $ be the set of $ \mu $ such that $ \Gamma_{\mu} $ admits the $ H_{p/q}$ canonical peripheral
quasidisk. By Lemma~\ref{lem6}, $ \mathcal{N}_{p/q} \subseteq \mathcal{R} $.

Consider the set $ \mathcal{Z}_{p/q} $ defined by
\begin{displaymath}
  \mathcal{Z}_{p/q} = \{ \mu \in \mathbb{C} : \Re P_{p/q}(\mu) < -2 \}.
\end{displaymath}
We make four observations.
\begin{enumerate}
  \item $ \mathcal{N}_{p/q} \subseteq \mathcal{Z}_{p/q} $ since, by definition, $ \Re \tr W_{p/q}(\mu) < -2 $ for $ \mu \in \mathcal{N}_{p/q} $;
  \item Note that $ \mathcal{N}_{p/q} $ is closed in $ \mathcal{Z}_{p/q} $ by Lemma~\ref{lem:closed}.
  \item By definition, $ \mathcal{Z}_{p/q} $ is open in $ \mathbb{C} $ (it is the inverse image of an open set); since $ \mathcal{N}_{p/q} $ is also open in $ \mathbb{C} $ (Lemma~\ref{lem:cpqopen})
        it is open in $ \mathcal{Z}_{p/q} $.
  \item Finally, $ \mathcal{N}_{p/q} \neq \emptyset $ by Lemma~\ref{priscan}.
\end{enumerate}
Thus $ \mathcal{N}_{p/q} $ is a union of non-empty connected components of $ \mathcal{Z}_{p/q} $ contained in $ \mathcal{R} $. By the Keen--Series theory, there are at most two such
connected components, namely the components corresponding to the pleating rays of asymptotic argument $ -\exp(\pi p/q) $ and $ -\exp(-\pi p/q) $ (Theorem~2.4 of \cite{SM}); and clearly
we hit both of these components. In any case, picking a branch of the inverse of $ P_{p/q} $ corresponding to these arguments will give a connected component of $ \mathcal{N}_{p/q} $,
and such a component is the desired neighbourhood of the cusp lying inside the Riley slice.

\printbibliography[heading=bibintoc]

\end{document}